\theoremstyle{plain}
\newtheorem{thm}{Theorem}[section]
\newtheorem*{thma}{Theorem A}
\newtheorem*{thmb}{Theorem B}
\newtheorem*{thmc}{Theorem C}
\newtheorem{lem}[thm]{Lemma}
\newtheorem{prop}[thm]{Proposition}
\newtheorem{cor}[thm]{Corollary}
\theoremstyle{definition}
\newtheorem{defn}[thm]{Definition}
\newtheorem{exmp}[thm]{Example}
\newtheorem{rem}[thm]{Remark}
\newtheorem{rems}[thm]{Remarks}
\newcommand{\N}{\mathbb{N}}
\newcommand{\R}{\mathbb{R}}
\newcommand{\C}{\mathbb{C}}
\newcommand{\Stab}{\operatorname{Stab}}
\newcommand{\Norm}{\operatorname{Norm}}
\newcommand{\spa}{\operatorname{span}}
\newcommand{\ootimes}{\, \overline{\otimes} \,}
\newcommand{\Aut}{\operatorname{Aut}}
\newcommand{\ev}{\operatorname{ev}}
\newcommand{\odd}{\operatorname{odd}}
\newcommand{\Ind}{\operatorname{Ind}}
\begin{document}

\title[Solid ergodicity for Gaussian actions]{On solid ergodicity for Gaussian actions}

\begin{abstract}
We investigate Gaussian actions through the study of their crossed-product von Neumann algebra. The motivational result is Chifan and Ioana's ergodic decomposition theorem for Bernoulli actions (\cite{CI}) that we generalize to Gaussian actions (Theorem A). We also give general structural results (Theorems \ref{adapted IPV} and \ref{ultraproduct}) that allow us to get a more accurate result at the level of von Neumann algebras. More precisely, for a large class of Gaussian actions $\Gamma \curvearrowright X$, we show that any subfactor $N$ of $L^\infty(X) \rtimes \Gamma$ containing $L^\infty(X)$ is either hyperfinite or is non-Gamma and prime. At the end of the article, we show a similar result for Bogoliubov actions.
\end{abstract}

\author{Remi Boutonnet}

\address{ENS Lyon \\
UMPA UMR 5669 \\
69364 Lyon cedex 7 \\
France}

\email{remi.boutonnet@ens-lyon.fr}

\maketitle

\section{Introduction}

During the past few years, the strategy of using von Neumann algebras to study probability measure preserving (\emph{p.m.p.}) actions (or more generally p.m.p. equivalence relations) has led to several breakthroughs. This fact is mainly due to the deformation/rigidity technology developed by Popa (\cite{popasup,popamal1,popamal2,popamsri}) in order to study finite von Neumann algebras.\\
Crossed-product von Neumann algebras fit well in the context of deformation/rigidity, especially when the action involved is a Bernoulli type action. Indeed, Bernoulli actions admit nice deformation properties, being s-malleable in the sense of Popa (\cite{popasup}), but also a very strong mixing property. Thus these actions have been intensively studied and many deep results have been discovered (\emph{see} \cite{CI,ioana5,ipv} for example).\\
Another class of s-malleable actions is Gaussian actions. Recall that if $\Gamma$ is a countable group and $\pi : \Gamma \rightarrow \mathcal{U}(H)$ is a unitary representation of $\Gamma$, there exist (see \cite{ps} for instance) a standard probability space $(X,\mu)$ and a pmp action of $\Gamma$ on $X$, such that $H \subset L^2(X)$, as representations of $\Gamma$. This action is called the {\it Gaussian action} induced by the representation $\pi$.

Although Gaussian actions are not as mixing as Bernoulli actions, we show that some results about Bernoulli actions can be generalized. This will be the case of the following theorem.

\begin{thm}[Chifan-Ioana, \cite{CI}]
\label{CI}
Let $\Gamma \curvearrowright I$ be an action of a discrete countable group on a countable set $I$, with amenable stabilizers (\emph{i.e.} $Stab(i)= \lbrace g \in \Gamma , \, g \cdot i = i \rbrace$ is amenable for all $i \in I$).
Consider the generalized Bernoulli action $\Gamma \curvearrowright ([0,1],\text{Leb})^I$ given by $g \cdot (x_i)_i = (x_{g^{-1} \cdot i})_i$ and $\mathcal{R}_\Gamma^I = \mathcal{R}(\Gamma \curvearrowright ([0,1],\text{Leb})^I)$ the associated equivalence relation.\\
Then $\mathcal R_\Gamma^I$ is \emph{solidly ergodic}\footnote{Terminology introduced by Gaboriau in \cite[Section 5]{gaboriauICM}.}, that is,  $\mathcal R_\Gamma^I$ has the following property : \\
``For any sub-equivalence relation $\mathcal{R} \subset \mathcal{R}_\Gamma^I$, there exists a countable partition $X = \bigsqcup_{n \in \mathbb{N}} X_n$ of $X$ into measurable $\mathcal{R}$-invariant subsets with :
\begin{itemize}
\item $\mathcal{R}_{|X_0}$ hyperfinite ;
\item $\mathcal{R}_{|X_n}$ is strongly ergodic for all $n \geq 1$."
\end{itemize}
Moreover, a similar decomposition applies for any quotient relation of $\mathcal{R}$.
\end{thm}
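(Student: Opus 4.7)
The plan is to recast the statement as one about subalgebras of the crossed-product von Neumann algebra and to attack it through Popa's deformation/rigidity machinery, built on the s-malleable deformation of the Bernoulli action. Set $A = L^\infty(X,\mu)$ and $M = A \rtimes \Gamma = L(\mathcal R_\Gamma^I)$. A subequivalence relation $\mathcal R \subset \mathcal R_\Gamma^I$ gives rise to an intermediate algebra $A \subset N \subset M$ with $N = L(\mathcal R)$, in which $A$ remains Cartan, so $Z(N) \subset A$ and the projections of $Z(N)$ are precisely the indicator functions of $\mathcal R$-invariant measurable sets. The partition $X = \bigsqcup_n X_n$ we want will therefore be read off a maximal partition $1 = \sum_n 1_{X_n}$ in $Z(N)$ with $N \cdot 1_{X_0}$ amenable (so that $\mathcal R_{|X_0}$ is hyperfinite, by Connes--Feldman--Weiss) and each $N \cdot 1_{X_n}$, $n \geq 1$, a $\mathrm{II}_1$ factor without property Gamma (so that $\mathcal R_{|X_n}$ is strongly ergodic, by Jones--Schmidt).

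The core of the proof is a dichotomy for the intermediate algebra $N$. I would use Popa's s-malleable deformation of the generalized Bernoulli action: realize $X \subset \tilde X = ([0,1]^2,\mathrm{Leb})^I$, so that pointwise rotation by angle $t$ on each $[0,1]^2$ factor yields a $1$-parameter group of trace-preserving automorphisms $(\alpha_t)$ of $\tilde M = L^\infty(\tilde X) \rtimes \Gamma$ that converges pointwise to $\mathrm{id}$ on $M$ and satisfies $\beta \alpha_t \beta = \alpha_{-t}$ for a $\mathbb{Z}/2$-symmetry $\beta$. Combining Popa's transversality inequality with a spectral-gap argument, one proves the following dichotomy for any central projection $p \in Z(N)$: either $\alpha_t \to \mathrm{id}$ uniformly on the unit ball of $Np$, or $Np$ admits an asymptotically central sequence that is moved by $\alpha_t$ for small $t$. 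In the first case, Popa's intertwining-by-bimodules together with an untwisting against $\beta$ locate $Np$ inside a corner of $A \rtimes \Stab(F)$ for some finite $F \subset I$, after which the assumed amenability of each $\Stab(i)$ upgrades this to amenability of $Np$. In the second case, the strong mixing of the Bernoulli representation forces the central sequence to live asymptotically inside $A$, and then the Cartan position of $A$ in $N$ rules out property Gamma, so $Np$ is a non-Gamma factor.

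The main technical obstacle is the spectral-gap/transversality analysis that powers this dichotomy, and in particular the step converting uniform $\alpha_t$-convergence on $(Np)_1$ into a genuine Popa-type intertwining of $Np$ into the subalgebra coming from a finite intersection of stabilizers: one must carefully track the decomposition of $L^2(X) \ominus \mathbb{C}$ under $\Gamma$ into symmetric-tensor components indexed by finite subsets of $I$ and exploit the amenability of the stabilizers uniformly across these components. Once the dichotomy is in hand, the required countable partition of $X$ is obtained by collecting the maximal amenable central summand into $X_0$ and using separability of $Z(N) \subset A$ to split the complement into minimal central projections. The ``moreover'' clause about quotient relations follows by running the same argument on the inclusion associated to the quotient equivalence relation, since $(\alpha_t)$ descends through the canonical quotient map.
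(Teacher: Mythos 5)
Your strategy (malleable deformation plus spectral gap applied to the intermediate algebra $A \subset N \subset M$) is in the right family, but the dichotomy on which everything rests is not correct, and this is where the proof breaks. The branch ``$\alpha_t \to \mathrm{id}$ uniformly on $(Np)_1$'' is empty: since $Ap \subset Np$ and the Bernoulli deformation is by construction transverse to $A$ (for unitaries of $A$ depending on many coordinates, e.g. $w(\xi)$ with $\Vert \xi \Vert$ large in the Gaussian picture, $\Vert \alpha_t(u) - u \Vert_2$ approaches $\sqrt{2}$ for any fixed $t>0$), the deformation never converges uniformly on the unit ball of an algebra containing $Ap$. Moreover the logic of the spectral-gap method runs in the opposite direction from the one you describe: one does not test uniform convergence on $Np$ to detect amenability; one assumes a corner is \emph{non}-amenable and uses spectral gap to force uniform convergence of $\alpha_t$ on its \emph{relative commutant} (or its central sequence algebra in $M^\omega$), and only then feeds that into the intertwining theorem. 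Concretely, the paper's route (Proposition \ref{positioncommutant} together with the proof of Theorem A) is: for a diffuse $P_0 \subset A$, if $Q = P_0' \cap M$ had a non-amenable direct summand, spectral gap would give uniform convergence on $Q' \cap M \supset P_0$, and Theorem \ref{adapted IPV} would then either place $P_0$ in $L\Gamma$ or place the normalizer in the amenable algebra $A \rtimes \Stab(i)$, both impossible. Your second branch has the same inversion: failure of uniform convergence on $(Np)_1$ does not produce an asymptotically central sequence, and the non-Gamma conclusion (which is anyway stronger than the strong ergodicity actually needed, and in the paper requires the extra almost-malnormality hypothesis and the ultraproduct localization, Theorem \ref{ultraproduct}) is obtained from uniform convergence on $N' \cap N^\omega$, not from its failure.

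Two further gaps. First, the ``moreover'' clause about quotients is not covered by your framework: a quotient $\mathcal S$ of $\mathcal R$ lives on a quotient space $X'$, so $L^\infty(X')$ is a diffuse subalgebra of $A$ rather than an algebra containing $A$, and $L\mathcal S$ is not an intermediate algebra $A \subset N \subset M$; the deformation does not ``descend'' to it in any useful sense. This is exactly why the paper does not argue directly on $L(\mathcal R)$ but instead invokes Chifan--Ioana's criterion (solid ergodicity is equivalent to: $Q' \cap M$ is amenable for \emph{every} diffuse $Q \subset A$), which quantifies over all diffuse subalgebras of $A$ and therefore captures subrelations and their quotients simultaneously; Theorem \ref{CI} is then the special case of Theorem A for $\pi = \ell^2(I)$, which is tempered and mixing relative to the amenable stabilizers. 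Second, splitting the non-amenable part of $\mathcal Z(N)$ into minimal projections requires proving that this part of the center is atomic; separability does not give this. One must rule out a diffuse central summand on the non-amenable part, which again follows from the commutant criterion (a diffuse $\mathcal Z(N)p \subset A$ forces $Np \subset (\mathcal Z(N)p)' \cap pMp$ to be amenable). I would recommend reorganizing the argument around the commutant criterion rather than around $L(\mathcal R)$ itself.
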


Recall that a pmp equivalence relation on $(X,\mu)$ is said to be strongly ergodic if for any asymptotically invariant sequence $(A_n)$ of measurable subsets of $X$, one has $\lim_n \mu(A_n)(1-\mu(A_n)) = 0$. Also a pmp equivalence relation $\mathcal S$ on a space $X'$ is a quotient of a pmp relation $\mathcal R$ on $X$ if there exists an onto pmp Borel map $p : X \rightarrow X'$ such that $\mathcal{S} = p^{(2)}(\mathcal{R})$, where $p^{(2)}(x, y) = (p(x), p(y))$.\\

As Chifan and Ioana explained in their paper, Theorem \ref{CI} is related to Gaboriau and Lyons's theorem on von Neumann's problem about non-amenable groups\footnote{Von Neumann's problem asks whether every non-amemable group contains a copy of a free group or not.} : In \cite{GL}, Gaboriau and Lyons gave a positive answer to von Neumann's problem in the measurable setting. It turns out that one of the main steps of their proof can be deduced from Theorem \ref{CI}. For a survey on that topic, see \cite{houdayerbourbaki}.\\

To prove Theorem \ref{CI}, Chifan and Ioana showed \cite[Proposition 6]{CI} that a measure-preserving equivalence relation on a probability space $(X,\mu)$ is solidly ergodic if and only if $Q' \cap L\mathcal{R}$ is amenable for any diffuse subalgebra $Q \subset L^\infty(X,\mu)$.
Here $L\mathcal R$ denotes the von Neumann algebra associated to $\mathcal R$ (\cite{FMII}).

With the same strategy, we will prove the analogous result for Gaussian actions, with reasonable restrictions on the representation we start with.

\begin{defn}[Vaes, \cite{vaesonecoho}]
A representation $\pi : \Gamma \curvearrowright \mathcal{O}(H)$ of a discrete countable group $\Gamma$ is said to be {\it mixing relative} to a family $\mathcal S$ of subgroups of $\Gamma$ if for all $\xi,\eta \in H$ and $\varepsilon > 0$, there exist $g_1,\cdots,g_n,h_1,\cdots,h_n \in \Gamma$ and $\Sigma_1, \cdots, \Sigma_n \in \mathcal{S}$ such that $\vert \langle \pi(g)\xi,\eta \rangle \vert < \varepsilon$, for all $g \in \Gamma \setminus \cup_{i=1}^n g_i\Sigma_ih_i$.
\end{defn}

\begin{thma}
Let $\pi : \Gamma \rightarrow \mathcal{O}(H_{\mathbb{R}})$ be an orthogonal representation of a countable discrete group  $\Gamma$. Denote by $\Gamma \curvearrowright (X,\mu)$ the Gaussian action associated to $\pi$, and by $\mathcal{R}_{\pi}$ the corresponding equivalence relation on $X$.
Assume that the following two conditions hold :
\begin{enumerate}
\item Some tensor power of $\pi$ is tempered (\emph{meaning} weakly contained in the regular representation) ;
\item $\pi$ is mixing relative to a family $\mathcal{S}$ of amenable subgroups of $\Gamma$.
\end{enumerate}
Then $\mathcal{R}_{\pi}$ is solidly ergodic.
\end{thma}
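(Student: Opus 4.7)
The first step is to apply the Chifan--Ioana reformulation of solid ergodicity (\cite[Proposition 6]{CI}): $\mathcal R_\pi$ is solidly ergodic if and only if, for every diffuse abelian von Neumann subalgebra $Q \subset A := L^\infty(X,\mu)$, the relative commutant $Q' \cap L\mathcal R_\pi$ is amenable. After passing to a free quotient (the non-free part of the Gaussian action factors through an amenable piece coming from kernels, and contributes only an amenable summand), we identify $L\mathcal R_\pi$ with $M := A \rtimes \Gamma$. Fix a diffuse $Q \subset A$ and set $P := Q' \cap M$; the task is to show that $P$ is amenable.

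The key tool is Popa's $s$-malleable deformation for Gaussian actions. Applying the Gaussian functor to the doubled representation on $H_\R \oplus H_\R$ and to the one-parameter group of orthogonal rotations mixing the two summands yields a dilation $M \subset \tilde M = L^\infty(\tilde X) \rtimes \Gamma$, a one-parameter group $(\alpha_t)_{t \in \R}$ of trace-preserving automorphisms of $\tilde M$ commuting with $\Gamma$ (with $\alpha_t \to \mathrm{id}$ pointwise in $\|\cdot\|_2$), and an involution $\beta$ making $(\alpha_t,\beta)$ $s$-malleable. I would then run Popa's deformation/rigidity dichotomy on $P$ through $(\alpha_t)$. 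In the non-convergent case, an adaptation of the Ioana--Peterson--Popa intertwining technology, combined with the relative mixing of $\pi$ to the family $\mathcal S$, would force a corner of $P$ to embed (in Popa's sense) into $A \rtimes \Sigma$ for some $\Sigma \in \mathcal S$; since $\Sigma$ is amenable, so is $A \rtimes \Sigma$, and hence so is $P$. This adaptation is precisely the content of Theorem \ref{adapted IPV}. In the uniformly convergent case, a standard spectral gap / $s$-malleability argument would give that $P$ is amenable relative to $A$ inside $M$.

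It then remains to upgrade $A$-relative amenability of $P$ to full amenability of $P$, and this is where the temperedness of some tensor power $\pi^{\otimes n}$ enters. Decomposing the symmetric Fock space of $H_\C$ gives $L^2(X) \ominus \C = \bigoplus_{k \geq 1} \mathrm{Sym}^k(H_\C)$, and since each $\mathrm{Sym}^k(H_\C)$ sits inside $\pi_\C^{\otimes k}$, the ideal property of temperedness under tensoring propagates temperedness of $\pi^{\otimes n}$ to every chaos of order $k \geq n$. The finitely many remaining low-order chaoses are absorbed through an iterated tensoring / ultrapower construction -- this is the role of Theorem \ref{ultraproduct}. The resulting temperedness of $\Gamma \curvearrowright L^2(X) \ominus \C$ translates into the $A$--$A$-bimodule $L^2(M) \ominus L^2(A)$ being weakly contained in a multiple of the coarse bimodule, and an Ozawa-type absorption principle then promotes $A$-relative amenability of $P$ to amenability of $P$.

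The main obstacle is the intertwining step of the dichotomy: the malleable deformation for Gaussian actions is well-understood, but the weaker \emph{relative} mixing hypothesis (replacing the full mixing exploited by Chifan--Ioana in the Bernoulli setting) prevents direct use of Popa's classical intertwining and requires one to handle a whole family $\mathcal S$ of amenable subgroups simultaneously. Theorem \ref{adapted IPV} is designed exactly to overcome this. A secondary difficulty is the upgrade step, since the temperedness hypothesis concerns only \emph{some} tensor power of $\pi$ rather than $\pi$ itself; handling the low-order chaoses without losing the $A$-bimodule control is the main technical content of Theorem \ref{ultraproduct}.
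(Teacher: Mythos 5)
Your overall toolbox is the right one (the Chifan--Ioana criterion, the $s$-malleable Gaussian deformation, Theorem \ref{adapted IPV}), but the way you assemble it contains two genuine errors that would break the argument.

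First, you misplace the temperedness hypothesis. In the actual proof it is consumed entirely by the spectral gap step (Lemmas \ref{Spectral Gap} and \ref{weaklycontained}): if some $\pi^{\otimes K}$ is tempered, then $H = L^2(\tilde M)\ominus L^2(M)$ has $H^{\otimes_M K}$ weakly contained in the coarse bimodule, whence every subalgebra $N\subset M$ \emph{with no amenable direct summand} satisfies $N'\cap\tilde M^\omega\subset M^\omega$, and Popa's transversality (Lemma \ref{transversality}) converts this into uniform convergence of $\alpha_t$ on $(N'\cap M)_1$. There is no ``upgrade from $A$-relative amenability to amenability'' step at the end, and if there were it would be vacuous: $A$ is abelian, hence amenable, so amenability relative to $A$ already implies amenability by composing weak containments. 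Relatedly, Theorem \ref{ultraproduct} plays no role in Theorem A --- the paper uses it only for the property Gamma part of Theorem B --- and your description of it as absorbing ``low-order chaoses'' of the Fock space does not match its content, which is a localisation statement for subalgebras of $M^\omega$ on which the deformation converges uniformly.

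Second, the logic of the dichotomy is inverted in your sketch. Writing $P\subset A$ for the diffuse subalgebra and $Q=P'\cap M$, the paper does not run the deformation on $Q$ and then try to read off amenability from the branch $Q\prec_M A\rtimes\Sigma$ (which in any case would only give amenability of a \emph{corner} of $Q$, not of $Q$). Instead one takes the maximal $q\in\mathcal Z(Q)$ with $qQ$ amenable, assumes $q\neq 1$, and applies the spectral gap argument to the non-amenable algebra $(1-q)Q$: the deformation then converges uniformly on $((1-q)Q)'\cap M$, so Theorem \ref{adapted IPV} applies to this relative commutant (this is Proposition \ref{positioncommutant}). Its first branch, that the normalizer embeds into some $A\rtimes\Sigma$, is \emph{excluded} rather than exploited, because that normalizer contains $(1-q)Q$, which is non-amenable while $A\rtimes\Sigma$ is amenable; the surviving branch gives $((1-q)Q)'\cap M\prec_M L\Gamma$, hence $(1-q)P\prec_M L\Gamma$, contradicting the diffuseness of $P\subset A$. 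This final ``intertwining into $L\Gamma$ versus diffuseness in $A$'' contradiction is the endgame your sketch is missing.
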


Under an extra mixing condition on $\pi$, we get more accurate result on the the strongly ergodic pieces that appear in solid ergodicity. Moreover, we prove that a similar decomposition applies to more general algebras than algebras coming from subequivalence relations. First, we define a weak version of malnormality.

\begin{defn}
\label{malnormal}
A subgroup $\Sigma$ of a group $\Lambda$ is said to be $n$-almost malnormal ($n \geq 1$), if for any $g_1,\cdots,g_n \in \Lambda$ such that $g_i^{-1}g_j \notin \Sigma$ for all $i \neq j$, the subgroup $\cap_{i=1}^n g_i\Sigma g_i^{-1}$ is finite. It is said to be almost-malnormal if it is $n$-almost malnormal for some $n \geq 1$.
\end{defn}

\begin{thmb}
Assume that condition (1) of Theorem A holds and that $\pi$ is mixing relative to a finite family $\mathcal S$ of amenable, almost-malnormal subgroups of $\Gamma$. Denote by $M = L^\infty(X) \rtimes \Gamma$ the crossed-product von Neumann algebra of the Gaussian action $\Gamma \curvearrowright (X,\mu)$  associated to $\pi$.\\
Let $Q \subset M$ be a subalgebra such that $Q \nprec_M L\Gamma$. Then there exists a sequence $(p_n)_{n \geq 0}$ of projections in $\mathcal Z(Q)$ with $\sum_n p_n = 1$ such that :
\begin{itemize}
\item $p_0Q$ is hyperfinite ;
\item $p_nQ$ is a prime factor and does not have property Gamma.
\end{itemize}
\end{thmb}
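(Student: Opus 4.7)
My plan is to combine the two structural results (Theorems \ref{adapted IPV} and \ref{ultraproduct}) announced in the introduction with an exhaustion argument on the center $\mathcal{Z}(Q)$. The central reduction is a \emph{local dichotomy}: for every non-zero projection $p \in \mathcal{Z}(Q)$, I will show that either there exists a non-zero subprojection $q \leq p$ in $\mathcal{Z}(Q)$ with $qQ$ hyperfinite, or $pQ$ is itself a prime factor without property Gamma. Once this is available, I take $p_0$ to be the supremum of all central projections $q$ for which $qQ$ is hyperfinite (well-defined since hyperfiniteness passes to direct sums), and then use a maximality argument to exhaust $\mathcal{Z}(Q)(1-p_0)$ by a family of pairwise orthogonal projections on each of which $Q$ is a prime, non-Gamma factor. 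Primeness forces each such projection to be an atom of $\mathcal{Z}(Q)(1-p_0)$, so the family is automatically at most countable, producing the partition $(p_n)_{n\geq 0}$.

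To establish the local dichotomy, I fix $p \in \mathcal{Z}(Q)$ and apply Theorem \ref{adapted IPV} to $pQ \subset pMp$, after noting that amplifying by a projection in $L^\infty(X)$ keeps us in the Gaussian crossed-product framework and that $Q \nprec_M L\Gamma$ implies $pQ \nprec_M L\Gamma$. The expected output is a dichotomy: either $pQ \prec_M L^\infty(X) \rtimes \Sigma$ for some $\Sigma$ in the family $\mathcal{S}$, or $pQ$ is ``adapted'' to $L^\infty(X)$ in a strong sense, essentially reducing the analysis to a subequivalence relation of $\mathcal{R}_\pi$. In the first branch, amenability of $\Sigma$ makes $L^\infty(X) \rtimes \Sigma$ hyperfinite; a routine corner argument then produces a non-zero $q \leq p$ in $\mathcal{Z}(Q)$ with $qQ$ hyperfinite. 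In the second branch, Theorem A applied to the associated subequivalence relation combined with the ultraproduct Theorem \ref{ultraproduct} should rule out property Gamma: the temperedness of some tensor power of $\pi$ enters in the standard Popa-style way to give a spectral gap in the ultrapower $(pMp)^\omega$, which is then inherited by $(pQ)' \cap (pMp)^\omega$. The same relative commutant analysis simultaneously rules out any non-trivial tensor decomposition $pQ = N_1 \ootimes N_2$ with both $N_i$ diffuse, yielding primeness.

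The main obstacle I expect is coordinating the two alternatives so that the intertwining output of Theorem \ref{adapted IPV} really aligns with a single amenable subgroup from $\mathcal{S}$: the almost-malnormality of the subgroups in $\mathcal{S}$ is precisely what allows intertwinings into distinct conjugates of $L^\infty(X) \rtimes \Sigma$ to be merged into one such intertwining, and without it, the ``location'' step of the dichotomy collapses. A secondary technical point is verifying that, in the rigid branch, a hypothetical central sequence witnessing property Gamma of $pQ$ would survive the malleable deformation argument applied to the Gaussian action; this is where condition~(1) is again crucial, as the tempered tensor power provides the mixing needed for Popa's transversality trick to conclude. Once this is handled, assembling the global decomposition from the local dichotomy is essentially bookkeeping.
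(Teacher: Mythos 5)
Your global skeleton (a maximal central projection $p_0$ with $p_0Q$ hyperfinite, atomicity of $(1-p_0)\mathcal Z(Q)$, then non-Gamma and primeness on each atom) matches the paper's Step 1/Step 2 organization, but the two places where you locate the real work are both off. First, your ``local dichotomy'' is not what Theorem \ref{adapted IPV} delivers. That theorem has a \emph{rigidity hypothesis} ($\vert\tau(\alpha_{t_0}(u^*)zu)\vert\geq c$ for all $u\in\mathcal U(Q)$) which is not automatic; in the paper it is supplied by the spectral gap lemmas applied to the \emph{relative commutant} $P=(pQ)'\cap pMp$ of a corner with no amenable direct summand --- not to $pQ$ itself --- and its conclusion concerns $\mathcal N(P)''$ embedding into $A\rtimes\Sigma$ versus $P\prec_M L\Gamma$ (refined, under almost-malnormality, to $P\prec_M\C1$ or $\mathcal N(P)''\prec_M L\Gamma$). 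In the paper the ``amenable'' branch is simply \emph{excluded} (since $Q\subset\mathcal N(P)''$ is non-amenable while $A\rtimes\Sigma$ is amenable); it never produces a hyperfinite corner of $Q$, and $p_0$ is taken maximal by fiat rather than extracted from a dichotomy. Atomicity of $(1-p_0)\mathcal Z(Q)$ is then obtained by noting that a diffuse central piece would sit inside $P$, contradicting $P\prec_M\C1$, while the other branch contradicts $Q\nprec_M L\Gamma$. Your version, which feeds $pQ$ directly into Theorem \ref{adapted IPV} and reads off ``$pQ\prec_M A\rtimes\Sigma$ or $pQ$ is adapted to $L^\infty(X)$'', is not a correct reading of the statement; also, Theorem A is a corollary of this machinery, not an input, and $Q$ is an arbitrary subalgebra, so there is no subequivalence relation to reduce to.

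The second and more serious gap is the property Gamma step. Spectral gap plus Theorem \ref{ultraproduct} do \emph{not} by themselves rule out a diffuse $B=N'\cap N^\omega$: they leave open case (2), namely $B\subset A^\omega\rtimes\Gamma$, which a priori is perfectly compatible with property Gamma. The paper needs a separate argument to kill this case: an Ozawa-style construction of $\tau$-independent commuting projections $p_n\in N$ with $(p_n)\in N'\cap N^\omega$ such that $C=\{p_n\}''$ has non-amenable relative commutant in $N$, whence Proposition \ref{positioncommutant} forces $qC\prec_M L\Gamma$, contradicted by a direct computation showing $\lim_n\Vert E_{L\Gamma}(xq(2p_n-1)y)\Vert_2=0$ using $(2p_n-1)\in A^\omega\rtimes\Gamma$ and weak convergence to $0$ (alternatively, freeness of the action plus strong ergodicity, as in the paper's remark). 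Your proposal asserts that the spectral gap ``is inherited by $(pQ)'\cap(pMp)^\omega$'' and that a central sequence ``would survive the malleable deformation argument,'' but this only shows the central sequences are rigid and hence subject to the trichotomy of Theorem \ref{ultraproduct}; it does not dispose of the residual case, which is where the actual content of the non-Gamma claim lies. The primeness part of your sketch is essentially right in spirit (apply the relative commutant analysis to the non-amenable tensor factor).
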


The following classes of representations satisfy the conditions of the two theorems above :
\begin{itemize}
\item Quasi-regular representations $\Gamma \curvearrowright \ell^2(\Gamma / \Sigma)$ with $\Sigma < \Gamma$ amenable and almost malnormal. Indeed, if $\Sigma$ is amenable one checks that the associated quasi-regular representation is tempered.
As explained in Example \ref{bernoulli}, in this case the associated Gaussian action is the generalized Bernoulli shift. Hence, Theorem A is indeed a generalization of Theorem \ref{CI}.
\item Strongly $\ell^p$ representations\footnote{A representation $\pi$ on $H$ is said to be strongly $\ell^p$ if for all $\varepsilon > 0$, there exists a dense subspace $H_0\subset H$ such that for all $\xi,\eta \in H_0$, $(\langle \pi(g)\xi,\eta \rangle) \in \ell^{p+\varepsilon}(\Gamma)$, \cite{shalomrigidity}} with $p \geq 2$. Sinclair pointed out in \cite{sinclair1} (using \cite{{CHH},{shalomrigidity}}) that these representations admit a tensor power which is tempered, and they are clearly mixing.
\end{itemize}
As we will see in section 2.3, if the representation we start with is strongly $\ell^p$ for $p > 2$, but not tempered, then the associated Gaussian action is \emph{not} a Bernoulli action.\\

At the end of the article, we prove the following adaptation of Theorem B in the context of Bogoliubov actions on the hyperfinite ${\rm II_1}$ factor (see section 5 for details).

\begin{thmc}
Assume that the representation $\pi$ is mixing relative to a finite family $\mathcal{S}$ of almost-malnormal amenable subgroups of $\Gamma$ and has a tensor power which is tempered.
Consider the Bogoliubov action $\Gamma \curvearrowright R$ on the hyperfinite II$_1$ factor associated to $\pi$, and put $M = R \rtimes \Gamma$.\\
Let $Q \subset M$ be a subalgebra such that $Q \nprec_M L\Gamma$. Then there exists a sequence  $(p_n)_{n \geq 0}$ of projections in $\mathcal Z(Q)$ with $\sum_n p_n = 1$ such that :
\begin{itemize}
\item $p_0Q$ is hyperfinite ;
\item $p_nQ$ is a prime factor and does not have property Gamma.
\end{itemize}
\end{thmc}

\subsection*{About the proofs of the main theorems} The proofs of Theorems A and B (and C as well) rely on a localization theorem (Theorem \ref{adapted IPV}) for subalgebras in the crossed-product $M = L^\infty(X) \rtimes \Gamma$, in the spirit of Theorem 5.2 of \cite{popasup}. In fact this is a generalization of Theorem 4.2 in \cite{ipv}, and the proof follows the same lines. Theorem A will be an immediate consequence of that result (modulo a spectral gap argument), whereas Theorem B will require more work on the ultraproduct von Neumann algebra of $M$ (Theorem \ref{ultraproduct}).

\subsection*{Structure of the article} Aside from the introduction this article contains 4 other sections. Section 2 is devoted to preliminaries about Gaussian actions and intertwining techniques. In Section 3, we use deformation/rigidity techniques to locate rigid subalgebras in the crossed-product or in its ultraproduct (Theorems \ref{adapted IPV} and \ref{ultraproduct}). In section 4, we prove Theorems A and B. The proof of Theorem C is presented in an extra-section, devoted to Bogoliubov actions.

\subsection*{Acknowledgement}
We are very grateful to Cyril Houdayer for suggesting this problem, and for all the great discussions and advice that he shared with us. We also thank Bachir Bekka for explaining Proposition \ref{nonbernoulli2} to us.

\section{Preliminaries}

\subsection{Terminology, notations and conventions}

In this article, all finite von Neumann algebras are equipped with a distinguished faithful normal trace $\tau$.\\ 
Every action of a discrete countable group $\Gamma$ on $M$ is assumed to preserve the trace, and $M \rtimes \Gamma$ denotes the associated crossed-product von Neumann algebra, equipped with the trace defined by $\tau(xu_g) = \tau_M(x)\delta_{g,e}$, for all $x \in M$, $g \in \Gamma$.\\
If $M$ is a finite von Neumann algebra, denote by $L^2(M)$ the GNS construction of $M$ for its distinguished trace. For a subspace $H \subset L^2(M)$, put $H^* = JH$, where $J : L^2(M) \rightarrow L^2(M)$ is the anti-linear involution defined by $x \mapsto x^*$, for $x \in M$.\\
If $Q \subset M$ are finite von Neumann algebras, the distinguished trace on $Q$ is obviously the restriction of the distinguished trace on $M$, and we write $E_Q: M \rightarrow Q$ for the unique trace-preserving conditional expectation onto $Q$ and $e_Q : L^2(M) \rightarrow L^2(Q)$ for the corresponding projection. Also, $\mathcal U(M)$ refers to the group of unitary elements in $M$, and $\mathcal{N}_M(Q) = \lbrace u \in \mathcal{U}(M) \, \vert \, uQu^* = Q \rbrace$ denotes the normalizer of $Q$ in $M$.\\
If $P,Q \subset M$ are von Neumann algebras, an element $x \in M$ is said to be $P$-$Q$-finite if there exist $x_1,\cdots,x_n,y_1,\cdots,y_m \in M$ such that 
$$xQ \subset \sum_{i=1}^n Px_i, \; \text{and} \; Px \subset \sum_{j=1}^m y_jQ.$$
The {\it quasi-normalizer} of $Q \subset M$ is the set of $Q$-$Q$-finite elements in $M$, and is denoted $\mathcal{QN}_M(Q)$.\\

Finally, given a von Neumann algebra $M$ and two $M$-$M$ bimodules $H$ and $K$, we write $H \subset_w K$ to denote that $H$ is weakly contained in $K$.
Moreover if $H$ and $K$ are two $M$-$M$ bimodules, we denote by $H \otimes_M K$ the Connes fusion tensor product of $H$ and $K$ (\cite{popacor}). If $\xi \in H$ is a right bounded vector, and $\eta \in K$, the element of $H \otimes_M K$ corresponding to $\xi \otimes \eta$ is denoted $\xi \otimes_M \eta$.

\subsection{Popa's intertwining technique}

We recall in this section one of the main ingredients of Popa's deformation/rigidity strategy : intertwining by bimodule.

\begin{thm}[Popa, \cite{{popamal1},{popamsri}}]
\label{intertwining}
Let $P,Q \subset M$ be finite von Neumann algebras and assume that $Q \subset M$ is a unital inclusion. Then the following are equivalent.
\begin{itemize}
\item There exist projections $p \in P$, $q \in Q$, a normal $*$-homomorphism $\psi: pPp \rightarrow qQq$, and a non-zero partial isometry $v \in pMq$ such that $xv = v\psi(x)$, for all $x \in pPp$ ;
\item There exists a $P$-$Q$ subbimodule $H$ of $L^2(1_PM)$ which has finite index when regarded as a right $Q$-module ;
\item There is no sequence of unitaries $(u_n) \in \mathcal U(P)$ such that $\Vert E_Q(x^*u_ny) \Vert_2 \rightarrow 0$, for all $x,y \in M$.
\end{itemize}
\end{thm}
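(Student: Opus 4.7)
The plan is to prove the cycle (i) $\Rightarrow$ (ii) $\Rightarrow$ (iii) $\Rightarrow$ (i); only the last implication requires real work. For (i) $\Rightarrow$ (ii), given $v \in pMq$ and $\psi : pPp \to qQq$ with $xv = v\psi(x)$, I take $H := \overline{PvQ}^{L^2(M)} \subset L^2(1_P M)$. The intertwining relation forces $\overline{pPp \cdot v \cdot Q}$ to sit inside the cyclic right $Q$-module $v \cdot L^2(qQq)$ of right $Q$-dimension $\tau(v^*v)$, and writing $1_P$ as a finite sum of projections each equivalent in $P$ to a subprojection of $p$ covers $H$ by finitely many such cyclic right $Q$-modules. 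For (ii) $\Rightarrow$ (iii), pick right-bounded vectors $\xi_1, \ldots, \xi_k \in H$ generating $H$ as a right $Q$-module; for any $u \in \mathcal U(P)$ the vectors $u\xi_i$ lie in $H$, and a Parseval expansion in $\{\xi_j\}$ yields a uniform positive lower bound on $\sum_{i,j} \Vert E_Q(\xi_j^* u \xi_i)\Vert_2^2$, so no sequence of unitaries in $P$ can achieve the decay of (iii) when tested against $x_j := \xi_j^*$, $y_i := \xi_i$.

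For (iii) $\Rightarrow$ (i), I would work in the Jones basic construction $\langle M, e_Q\rangle$ equipped with its canonical faithful normal semifinite trace $\widehat\tau$ determined by $\widehat\tau(x e_Q y) = \tau(xy)$. Since $\widehat\tau(u e_Q u^*) = \widehat\tau(e_Q) = 1$ and $\Vert u e_Q u^*\Vert = 1$ for every $u \in \mathcal U(P)$, the convex hull $C := \operatorname{conv}\{u e_Q u^* : u \in \mathcal U(P)\}$ lies in the positive part of the unit ball of $L^2(\widehat\tau)$. Let $K$ be its closure in the weak $L^2(\widehat\tau)$-topology. Uniform convexity of $\Vert \cdot \Vert_{2,\widehat\tau}$ produces a unique minimizer $T \in K$ of the $L^2$-norm, and uniqueness forces $u T u^* = T$ for all $u \in \mathcal U(P)$, i.e., $T \in P' \cap \langle M, e_Q\rangle$ with $\widehat\tau(T) \leq 1$. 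Assuming momentarily that $T \neq 0$, a spectral projection of $T$ for an interval bounded away from $0$ furnishes a nonzero projection $e \in P' \cap \langle M, e_Q\rangle$ with $\widehat\tau(e) < \infty$. The associated $P$-$Q$-sub-bimodule $eL^2(M) \subset L^2(M)$ has finite right $Q$-dimension $\widehat\tau(e)$; expressing the left $P$-action in a right-bounded basis of $eL^2(M)$ realises $P$ as a subalgebra of a matrix amplification of $Q$, and a minimal-central-projection cut-down together with a polar decomposition extract the partial isometry $v$ and the normal $*$-homomorphism $\psi : pPp \to qQq$ of (i).

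The main obstacle is the nonvanishing $T \neq 0$, which is precisely where hypothesis (iii) enters. Were $T = 0$, a sequence of convex combinations $S_n = \sum_i \lambda_{n,i} u_{n,i} e_Q u_{n,i}^*$ would satisfy $\Vert S_n \Vert_{2,\widehat\tau} \to 0$, hence $\widehat\tau(S_n \cdot \zeta) \to 0$ for every $\zeta \in L^2(\widehat\tau)$. Testing against $\zeta = x e_Q y$ via the identity $\widehat\tau(u e_Q u^* \cdot x e_Q y) = \tau(E_Q(y u) E_Q(u^* x))$ and its bilinear polarisations, a diagonal argument over a countable dense subset of $M \times M$ should extract a single sequence $(u_n) \subset \mathcal U(P)$ with $\Vert E_Q(x^* u_n y)\Vert_2 \to 0$ for all $x, y \in M$, contradicting (iii). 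The delicate point is that one must handle both free parameters $x$ and $y$ simultaneously, not only one of them; this is where the bookkeeping in the diagonal extraction becomes subtle, and is the technical heart of the argument.
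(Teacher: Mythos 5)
The paper does not actually prove this statement: it is quoted as Popa's intertwining-by-bimodules theorem with a reference to \cite{popamal1}, so your attempt can only be measured against the standard proof. Your architecture for the hard implication --- the basic construction $\langle M, e_Q\rangle$ with its canonical trace $\widehat\tau$, the norm-minimal element $T$ of the weak $L^2$-closure of $\operatorname{conv}\{u e_Q u^* : u \in \mathcal U(P)\}$, invariance of $T$ under $\operatorname{Ad}(u)$ by uniqueness of the minimiser, and a spectral cut-off producing a nonzero finite-trace projection in $P' \cap \langle M, e_Q\rangle$ --- is exactly Popa's, and that part is sound.

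The genuine gap is in the step ``$T=0$ implies the failure of (iii)'', and it is not the diagonal-extraction bookkeeping you flag at the end: the convex hull of the elements $u e_Q u^*$ alone cannot see the second variable $y$. If $S_n = \sum_i \lambda_{n,i} u_{n,i} e_Q u_{n,i}^*$ tends to $0$ in $\Vert\cdot\Vert_{2,\widehat\tau}$, pairing with $x e_Q y$ and using $e_Q a e_Q = E_Q(a) e_Q$ only yields $\sum_i \lambda_{n,i}\, \tau\bigl(E_Q(y u_{n,i}) E_Q(u_{n,i}^* x)\bigr) \to 0$, which by Cauchy--Schwarz is controlled by the one-sided quantities $\Vert E_Q(u^* x)\Vert_2$ and $\Vert E_Q(y u)\Vert_2$. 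The two-sided quantity actually needed satisfies $\Vert E_Q(x^* u y)\Vert_2^2 = \widehat\tau\bigl(x e_Q x^* \cdot u (y e_Q y^*) u^*\bigr)$, so what must be made small is a convex combination of the conjugates $u (y e_Q y^*) u^*$, not of $u e_Q u^*$; and $\Vert S_n\Vert_{2,\widehat\tau} \to 0$ does not imply $\Vert \sum_i \lambda_{n,i} u_{n,i} (y e_Q y^*) u_{n,i}^*\Vert_{2,\widehat\tau} \to 0$, since conjugation by a non-unitary $y$ does not interact with the $L^2$-norm. The standard repair is to run your minimal-norm argument for every positive finite-trace element $f_F = \sum_{y \in F} y e_Q y^*$ with $F \subset M$ finite: each minimiser $T_{f_F}$ lies in $P' \cap \langle M, e_Q\rangle$ with $\widehat\tau(T_{f_F}) \le \widehat\tau(f_F) < \infty$, so the absence of a finite-trace projection forces all $T_{f_F} = 0$, and pairing the corresponding convex combinations against $\sum_{x \in F} x e_Q x^*$ produces, for each finite $F$ and $\varepsilon > 0$, a single $u \in \mathcal U(P)$ with $\Vert E_Q(x^* u y)\Vert_2 < \varepsilon$ for all $x, y \in F$; only then does the diagonal extraction succeed. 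A smaller inaccuracy: in (i)$\Rightarrow$(ii) you write $1_P$ as a \emph{finite} sum of projections each equivalent to a subprojection of $p$, which fails whenever the central trace of $p$ is not bounded below (e.g.\ $P$ abelian and diffuse with $\tau(p) < \tau(1_P)$); the usual fix again goes through the basic construction, extracting a finite-trace projection of $P' \cap \langle M, e_Q\rangle$ from the one onto $\overline{vQ}$.
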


Following \cite{popamal1}, if $P,Q \subset M$ satisfy these conditions, we say that {\it a corner of $P$ embeds into $Q$ inside $M$}, and we write $P \prec_M Q$.\\
Note that there also exists a ``diagonal version" of this theorem (see Remark 3.3 in \cite{vaesbimodule} for instance) : If $(Q_k)$ is a sequence of subalgebras of $M$ such that $P \nprec_M Q_k$ for all $k$, then one can find a sequence of unitaries $u_n \in \mathcal{U}(P)$ such that $\lim_n \Vert E_{Q_k}(xu_ny)\Vert_2 = 0$, for all $k \in \N$.

We also mention a relative version\footnote{meaning {\it relative to a subspace of $L^2(M)$}} of Theorem \ref{intertwining}.

\begin{lem}[Vaes, \cite{vaesbimodule}]
\label{relativeintertwining}
Let $B \subset M$ be finite von Neumann algebras, and $H \subset L^2(M)$ a $B$-$B$ sub-bimodule. Assume that there exists a sequence of unitaries $u_n \in \mathcal U(B)$ such that $$\lim_n \Vert e_B(xu_n\xi) \Vert_2 = 0, \; \text{for all } x \in M, \, \xi \in H^\bot.$$
Then any $B$-$B$ sub-bimodule $K$ of $L^2(M)$ with $\dim(K_B) < \infty$ is contained in $H$. In particular, the quasi-normalizer $\mathcal{QN}_M(B)''$ is contained in $H \cap H^*$.
\end{lem}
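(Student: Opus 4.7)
The plan is to argue by contradiction, reducing to the case $K \subset H^\perp$ and then producing an approximate Pimsner-Popa basis of $K$ with elements in $M$ that lets the hypothesis force $K = 0$.

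First, the reduction. The space $H^\perp$ is itself a $B$-$B$-sub-bimodule, so its orthogonal projection $p = p_{H^\perp}$ commutes with both the left and right action of $B$ on $L^2(M)$. Then $p(K) \subset H^\perp$ is a $B$-$B$-sub-bimodule and a $B$-$B$-bimodular quotient of $K$, hence has right $B$-dimension at most $\dim(K_B) < \infty$. Replacing $K$ by $p(K)$, I may assume $K \subset H^\perp$ and aim to prove $K = 0$.

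The technical heart of the proof is the construction of an approximate Pimsner-Popa basis for $K$ drawn from $M$: for every $\varepsilon > 0$, there exist $x_1, \ldots, x_n \in M$ such that
$$e_K \le (1 + \varepsilon) \sum_{i=1}^n x_i e_B x_i^*$$
as operators on $L^2(M)$. This should follow from standard manipulations inside the basic construction $\langle M, e_B \rangle$: the projection $e_K$ has finite semifinite trace $\operatorname{Tr}(e_K) = \dim(K_B)$, and each rank-one piece $L_{\eta_i} L_{\eta_i}^*$ of an exact Pimsner-Popa basis of $K$ can be dominated by a positive combination $\sum x e_B x^*$ with $x \in M$; summing gives the claim. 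I expect this majorization step---promoting the $L^2$-density of $M e_B M$ in $\langle M, e_B \rangle$ to a positive operator bound---to be the main technical obstacle.

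Once the inequality is available, the contradiction is immediate. Assume $K \ne 0$ and pick $\xi \in K$ with $\|\xi\|_2 = 1$. For every $n$, the vector $u_n \xi$ lies in $K \cap H^\perp$ by left $B$-invariance of both spaces, and still has norm one, so $e_K u_n \xi = u_n \xi$. Combining with the inequality yields
$$1 = \langle e_K u_n \xi, u_n \xi \rangle \le (1+\varepsilon) \sum_{i=1}^n \|E_B(x_i^* u_n \xi)\|_2^2 \xrightarrow[n \to \infty]{} 0,$$
since the hypothesis, applied with $x = x_i^* \in M$ and vector $\xi \in H^\perp$, gives $\|e_B(x_i^* u_n \xi)\|_2 \to 0$ for each $i$. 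This contradiction forces $K \subset H$.

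For the quasi-normalizer statement, any $y \in \mathcal{QN}_M(B)$ generates by definition a $B$-$B$-sub-bimodule $\overline{\operatorname{span}(ByB)} \subset L^2(M)$ of finite right (and left) $B$-dimension. The main part of the lemma places it inside $H$, so $\mathcal{QN}_M(B)\hat 1 \subset H$, and passing to the $L^2$-closure yields $\mathcal{QN}_M(B)''\hat 1 \subset H$. Since $\mathcal{QN}_M(B)$ is $*$-stable, this subspace is invariant under $J$, and therefore also contained in $JH = H^*$, giving $\mathcal{QN}_M(B)'' \subset H \cap H^*$.
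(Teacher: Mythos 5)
The paper itself offers no proof of this lemma (it is imported verbatim from \cite{vaesbimodule}), so I am comparing your argument with the standard one. Your reduction to the case $K\subset H^{\perp}$, the final limiting computation, and the deduction of the quasi-normalizer statement are all sound. The problem is precisely the step you call the technical heart: the operator inequality $e_K\le(1+\varepsilon)\sum_{i=1}^{n}x_ie_Bx_i^{*}$ with $x_i\in M$ is not merely hard, it is false in general. Take $B=\C 1\subset M=L^{\infty}[0,1]$ and $K=\C\xi$ with $\xi\in L^{2}[0,1]\setminus L^{\infty}[0,1]$. Then each $x_ie_Bx_i^{*}$ is the rank-one operator $\vert x_i\rangle\langle x_i\vert$, so by Douglas's range inclusion theorem any positive finite-rank majorant $T=(1+\varepsilon)\sum_i x_ie_Bx_i^{*}$ of $e_K$ would force $\xi\in\operatorname{ran}(T^{1/2})=\operatorname{span}\{x_1,\dots,x_n\}\subset L^{\infty}[0,1]$, a contradiction. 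The underlying obstruction is that a Pimsner--Popa basis of $K$ consists of right-$B$-bounded vectors of $L^{2}(M)$ which in general do not lie in $M\hat 1$ and cannot be approximated by elements of $M$ in the operator norm of $L_{\eta}$ (only in $\Vert\cdot\Vert_{2}$), so the rank-one pieces $L_{\eta_i}L_{\eta_i}^{*}$ admit no such domination. Since your final display rests entirely on this inequality, the proof does not close as written.

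The standard argument keeps your architecture but makes two changes. First, one only approximates $e_K$ in $\Vert\cdot\Vert_{2,\Tr}$ by some $T=\sum_i x_ie_By_i^{*}\in\operatorname{span}(Me_BM)$, which is possible because $\Tr(e_K)=\dim(K_B)<\infty$. Second, one tests only against right-$B$-bounded vectors $\xi\in K$; these are dense in $K$, so proving $\xi=0$ for them suffices, and $u_n\xi$ is then right-bounded with $\Vert L_{u_n\xi}\Vert=\Vert L_{\xi}\Vert$. The point is the estimate $\vert\langle S\zeta,\zeta\rangle\vert\le\Vert L_{\zeta}\Vert^{2}\,\Vert S\Vert_{1,\Tr}$ for self-adjoint $S\in\langle M,e_B\rangle$ and right-bounded $\zeta$ (for $S\ge 0$ write $\langle S\zeta,\zeta\rangle=\Tr(S^{1/2}L_{\zeta}L_{\zeta}^{*}S^{1/2})\le\Vert L_{\zeta}\Vert^{2}\Tr(S)$). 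Applying it to $S=e_K-e_K\tfrac{1}{2}(T+T^{*})e_K$, whose $\Vert\cdot\Vert_{1,\Tr}$-norm is at most $\sqrt{\dim(K_B)}\,\Vert e_K-T\Vert_{2,\Tr}$ by Cauchy--Schwarz, while $\langle Tu_n\xi,u_n\xi\rangle=\sum_i\langle e_B(y_i^{*}u_n\xi),e_B(x_i^{*}u_n\xi)\rangle\to 0$ by hypothesis, gives $\Vert\xi\Vert_{2}^{2}\le\Vert L_{\xi}\Vert^{2}\sqrt{\dim(K_B)}\,\varepsilon$ for every $\varepsilon>0$, hence $\xi=0$. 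Without the restriction to right-bounded vectors and this trace estimate, no approximation of $e_K$ by elements of $\operatorname{span}(Me_BM)$ can control the quadratic form at the arbitrary $L^{2}$-vectors $u_n\xi$, which is exactly where your argument breaks.
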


Finally we state a specific intertwining lemma, more adapted to crossed-product von Neumann algebras. Assume that $\Gamma$ is a discrete countable group, and that $\mathcal S$ is a family of subgroups of $\Gamma$. Following \cite[Definition 15.1.1]{brownozawa}, we say that a subset $F$ of $\Gamma$ is small relative to $\mathcal S$, if it is of the form $\cup_{i=1}^n g_i\Sigma_i h_i$, for some $g_1,\cdots,g_n,h_1,\cdots,h_n \in \Gamma$, and $\Sigma_1,\cdots,\Sigma_n \in \mathcal S$.\\
Also, for any $F \subset \Gamma$, denote by $P_F \in B(L^2(\tilde M))$ the projection onto $\overline{\spa} \lbrace au_g \, | \, a \in \tilde A, g \in F \rbrace$.

\begin{lem}[Vaes, \cite{vaesonecoho}]
\label{intertwiningmixing}
Assume that $\Gamma \curvearrowright N$ is an action on a finite von Neumann algebra, and write $M = N \rtimes \Gamma$.
Let $p \in M$ be a projection and $B \subset pMp$ be a von Neumann subalgebra. The following are equivalent.
\begin{itemize}
\item $B \nprec_M N \rtimes \Sigma$, for every $\Sigma \in \mathcal{S}$ ;
\item There exists a net of unitaries $w_i \in \mathcal U(B)$ such that $\Vert P_F(w_i) \Vert_2 \rightarrow 0$ for every subset $F \subset \Gamma$ that is small relative to $\mathcal S$.
\end{itemize}
\end{lem}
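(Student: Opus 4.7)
The plan is to reduce both implications to Popa's intertwining theorem (Theorem~\ref{intertwining}) via a Fourier-type identity linking $P_F$ to conditional expectations onto $N\rtimes\Sigma$.

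First I would write any $w\in M$ in its Fourier expansion $w=\sum_{k\in\Gamma} c_k u_k$ with $c_k=E_N(wu_k^*)\in N$, so that $P_F(w)=\sum_{k\in F}c_ku_k$ and $\|P_F(w)\|_2^2=\sum_{k\in F}\|c_k\|_2^2$. A direct computation using the covariance $u_g c_k=\sigma_g(c_k)u_g$ yields
$$E_{N\rtimes\Sigma}(u_g^*\,w\,u_h^*)=\sum_{k\in g\Sigma h}\sigma_{g^{-1}}(c_k)\,u_{g^{-1}kh^{-1}},$$
whence $\|E_{N\rtimes\Sigma}(u_g^*wu_h^*)\|_2=\|P_{g\Sigma h}(w)\|_2$. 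More generally, for $a,b\in N$ the same calculation combined with $\|xyz\|_2\le\|x\|_\infty\|z\|_\infty\|y\|_2$ and the trace-preservation of $\sigma_g$ gives the bound $\|E_{N\rtimes\Sigma}(au_g\,w\,bu_h)\|_2\le\|a\|_\infty\|b\|_\infty\|P_{g^{-1}\Sigma h^{-1}}(w)\|_2$.

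For (ii)$\Rightarrow$(i): fix $\Sigma\in\mathcal S$ and note that each set $g^{-1}\Sigma h^{-1}$ is small relative to $\mathcal S$. The bound above together with the hypothesis gives $\|E_{N\rtimes\Sigma}(au_g w_i bu_h)\|_2\to 0$ for all $a,b\in N$ and $g,h\in\Gamma$. Since $\|w_i\|_\infty\le 1$, elements of the form $au_g$ are $\|\cdot\|_2$-total in $M$, and $x\mapsto E_{N\rtimes\Sigma}(xw_iy)$ is contractive from $L^2(M)$ to $L^2(N\rtimes\Sigma)$ when one factor is fixed and the other ranges over the unit ball, an $\varepsilon/3$-approximation argument upgrades this to $\|E_{N\rtimes\Sigma}(xw_iy)\|_2\to 0$ for all $x,y\in M$. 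Theorem~\ref{intertwining} then yields $B\nprec_M N\rtimes\Sigma$.

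For (i)$\Rightarrow$(ii): I would invoke the diagonal version of Popa's intertwining recalled after Theorem~\ref{intertwining}. Given any finite subfamily $\mathcal F\subset\mathcal S$, any finite $T\subset\Gamma$, and any $\varepsilon>0$, applying it to the finite family $\{N\rtimes\Sigma\}_{\Sigma\in\mathcal F}$ produces a single $w\in\mathcal U(B)$ satisfying $\|E_{N\rtimes\Sigma}(u_g^*wu_h^*)\|_2<\varepsilon$ for every $\Sigma\in\mathcal F$ and $g,h\in T$; by the identity above this reads $\|P_{g\Sigma h}(w)\|_2<\varepsilon$ for the same triples. Indexing the triples $(\mathcal F,T,\varepsilon)$ by the natural partial order (refinement of $\mathcal F, T$ and smaller $\varepsilon$) defines the net $(w_i)$. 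For a general small set $F=\bigcup_{j=1}^m g_j\Sigma_j h_j$, orthogonality of the blocks $L^2(N)u_k$ gives $\|P_F(w)\|_2^2\le\sum_j\|P_{g_j\Sigma_j h_j}(w)\|_2^2$, and eventually along the net the right-hand side is below $m\cdot(\varepsilon/\sqrt m)^2=\varepsilon^2$. The main obstacle is really only bookkeeping: since $\mathcal S$ need not be countable, the diagonal sequence must be replaced by a net, but because every small $F$ involves only finitely many elements of $\mathcal S$, restricting to finite subfamilies at each stage is enough.
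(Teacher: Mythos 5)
Your proof is correct. Note that the paper does not prove this lemma at all --- it is quoted from Vaes \cite{vaesonecoho} --- so there is no in-text argument to compare against; your derivation, resting on the exact identity $\Vert E_{N\rtimes\Sigma}(u_g^* w u_h^*)\Vert_2 = \Vert P_{g\Sigma h}(w)\Vert_2$ together with Popa's intertwining criterion and its diagonal version, is the standard one and matches the cited source in substance. The only point worth flagging is that the sequence-based formulation of Theorem \ref{intertwining} quietly becomes a net-based one in both directions; this is harmless (and you handle the uncountability of $\mathcal S$ correctly by passing through finite subfamilies), but it deserves the one-line remark that the existence of such a \emph{net} already excludes a finite-index intertwining bimodule.
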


\subsection{Gaussian actions}
We will use the following definition of the Gaussian functor, taken from \cite{vaesonecoho}. It can be checked that this characterizes both of the constructions given in \cite[Appendix A.7]{bhv} or \cite{ps}.\\

Assume that $H_\R$ is a real Hilbert space. Denote by $(A,\tau)$ the unique pair of an abelian von Neumann algebra $A$ with a trace $\tau$, and $A$ is generated by unitaries $(w(\xi))_{\xi \in H_\R}$ such that :
\begin{enumerate}[a)] 
\item $w(0) = 1$ and $w(\xi + \eta) = w(\xi)w(\eta)$, $w(\xi)^* = w(-\xi)$, for all $\xi,\eta \in H_\R$;
\item $\tau(w(\xi)) = exp(-\Vert \xi \Vert^2)$, for all $\xi \in H_\R$.
\end{enumerate}
It is easy to check that these conditions imply that the vectors $(w(\xi))_{\xi \in H_\R}$ are linearly independent and span a weakly dense $^*$-subalgebra of $A$, so that $(A,\tau)$ is indeed unique.

Now, for any orthogonal operator $U \in \mathcal{O}(H_\R)$, one can define a trace preserving automorphism $\theta_U$ of $A$ by the formula $\theta_U(w(\xi)) = w(U\xi)$.
Hence, to any orthogonal representation $\pi : \Gamma \rightarrow \mathcal{O}(H_\R)$ of a group $\Gamma$, one can associate a unique trace preserving action $\sigma_\pi$ of $\Gamma$ on $A$ such that $(\sigma_\pi)_g(w(\xi)) = w(\pi(g)\xi)$. This action $\sigma_\pi$ is called the \textit{Gaussian action} associated to $\pi$. In that context, $A$ will also be denoted $A_\pi$.\\

In the sequel, $\Gamma$ will denote a discrete countable group, and all the representations considered are assumed to be orthogonal.

\begin{rem}
Let $\pi$ a representation of $\Gamma$ and write $A = L^\infty(X,\mu)$. Naturally, $\sigma_\pi$ induces a measure preserving action of $\Gamma$ on $(X,\mu)$. Abusing with terminology, this action is also called the Gaussian action associated to $\pi$.
\end{rem}

\begin{exmp}
\label{bernoulli}
If $\Gamma$ acts on a countable set $I$, then the Gaussian action associated to the representation $\pi : \Gamma \rightarrow \mathcal{O}(\ell^2_\R(I))$ is the generalized Bernoulli action with diffuse basis $\Gamma \curvearrowright [0,1]^I$.
\end{exmp}
\begin{proof} Denote by $\mu_0$ the Gaussian probability measure on $\R$ : $$\mu_0 = \frac{1}{\sqrt{2\pi}}\exp(-x^2/2)dx,$$ and put $X = \R^I$, equipped with the product measure $\mu = \otimes_I \mu_0$. Also, for all $k \in I$ denote by $P_k : X \rightarrow \R$ the projection on the $k^{\text{th}}$ component. Then $(P_k)_{k \in I}$ is an orthonormal family in $L^2(X,\mu)$, so that one can define an embedding $\phi : \ell^2_\R(I) \rightarrow L^2_\R(X,\mu)$ by $\phi(\delta_k) = P_k$, for all $k \in I$.\\
Now, for all $\xi \in \ell^2_\R(I)$, put $w(\xi) = exp(i\sqrt{2}\phi(\xi)) \in \mathcal{U}(L^\infty(X,\mu))$. It is easily checked that these vectors satisfy conditions a) and b) above, and that the action of $\Gamma$ on $I$ is transformed into a shift of variables. Finally, the last thing to verify is that the von Neumann algebra $A$ generated by the $w(\xi)$'s is equal to $L^\infty(X,\mu)$. To do so, fix an increasing sequence $(X_n)_n$ of compacts subsets of $X$ such that $\cup_n X_n = X$ and $\lim_n \mu(X_n) = 1$ and put $p_n = \mathbf{1}_{X_n} \in L^\infty(X,\mu)$, $n \in \N$. Stone-Weiertrass' theorem implies that for all $n$, $Ap_n$ contains $C(X_n)$, showing that $A = L^\infty(X)$.
\end{proof}

\begin{lem}
\label{tensor product}
Let $\pi$ be a representation of $\Gamma$. Then $A_{\pi \oplus \pi} \simeq A_\pi \overline{\otimes} A_\pi$ and under this identification, $\sigma_{\pi \oplus \pi} = \sigma_\pi \otimes \sigma_\pi$.
\end{lem}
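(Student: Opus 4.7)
The plan is to use directly the uniqueness part of the characterization of the Gaussian functor recalled just before the lemma. To build the isomorphism, I would construct unitaries in $A_\pi \ootimes A_\pi$ indexed by vectors in $H_\R \oplus H_\R$ that satisfy the defining relations (a) and (b) with respect to the trace $\tau \otimes \tau$; uniqueness then yields a canonical trace-preserving $\ast$-isomorphism onto $A_{\pi \oplus \pi}$.

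Concretely, write $w_\pi$ for the generators of $A_\pi$ and define, for $\xi,\eta \in H_\R$,
\[
W(\xi \oplus \eta) := w_\pi(\xi) \otimes w_\pi(\eta) \;\in\; \mathcal U(A_\pi \ootimes A_\pi).
\]
A direct check gives $W(0)=1$, $W((\xi_1 \oplus \eta_1)+(\xi_2 \oplus \eta_2)) = W(\xi_1 \oplus \eta_1)\,W(\xi_2 \oplus \eta_2)$ (using that $A_\pi$ is abelian for the middle cross-term), and $W(\xi \oplus \eta)^* = W(-\xi \oplus -\eta)$, which is property (a). Property (b) is immediate:
\[
(\tau \otimes \tau)\bigl(W(\xi \oplus \eta)\bigr) = \tau(w_\pi(\xi))\,\tau(w_\pi(\eta)) = e^{-\|\xi\|^2}e^{-\|\eta\|^2} = e^{-\|\xi \oplus \eta\|^2}.
\]
Finally the $W(\xi \oplus 0) = w_\pi(\xi) \otimes 1$ and $W(0 \oplus \eta) = 1 \otimes w_\pi(\eta)$ jointly generate the algebraic tensor product of the $\ast$-algebras spanned by the $w_\pi$'s, which is weakly dense in $A_\pi \ootimes A_\pi$. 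The uniqueness statement for the Gaussian construction then produces a (unique) trace-preserving $\ast$-isomorphism $\Phi : A_{\pi \oplus \pi} \to A_\pi \ootimes A_\pi$ satisfying $\Phi\bigl(w_{\pi\oplus\pi}(\xi \oplus \eta)\bigr) = w_\pi(\xi) \otimes w_\pi(\eta)$.

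For the equivariance, it suffices to check it on the generators, since $\Phi$ is a normal $\ast$-homomorphism and the $w_{\pi \oplus \pi}(\xi \oplus \eta)$ are weakly total. For $g \in \Gamma$,
\[
\Phi \circ \sigma_{\pi \oplus \pi}(g)\bigl(w_{\pi \oplus \pi}(\xi \oplus \eta)\bigr) = \Phi\bigl(w_{\pi \oplus \pi}(\pi(g)\xi \oplus \pi(g)\eta)\bigr) = w_\pi(\pi(g)\xi) \otimes w_\pi(\pi(g)\eta),
\]
which equals $(\sigma_\pi \otimes \sigma_\pi)(g)\bigl(w_\pi(\xi) \otimes w_\pi(\eta)\bigr) = (\sigma_\pi \otimes \sigma_\pi)(g) \circ \Phi\bigl(w_{\pi \oplus \pi}(\xi \oplus \eta)\bigr)$, as desired.

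There is no serious obstacle here; the only point requiring a little care is the weak density of the algebra generated by the $W(\xi \oplus \eta)$, but this follows from the weak density of the span of the $w_\pi(\xi)$ in $A_\pi$ (already used in the uniqueness argument for $A_\pi$ itself) combined with standard properties of the tensor product of finite von Neumann algebras.
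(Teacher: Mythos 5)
Your proof is correct and follows exactly the same route as the paper: both exhibit the unitaries $w(\xi)\otimes w(\eta)$ in $A_\pi \ootimes A_\pi$, observe that they satisfy the defining relations of the generators $w(\xi\oplus\eta)$ of $A_{\pi\oplus\pi}$, and invoke the uniqueness of the Gaussian construction to obtain the equivariant isomorphism. You simply spell out the verification of relations (a), (b), the weak density, and the equivariance, which the paper leaves implicit.
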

\begin{proof}
Note that $A_\pi \otimes A_\pi$ is generated by the unitary elements $w(\xi) \otimes w(\eta)$, for $\xi,\eta \in H_\R$, which satisfy the same relations as the $w(\xi \oplus \eta)$'s. Therefore the map $w(\xi \oplus \eta) \mapsto w(\xi) \otimes w(\eta)$, $\xi,\eta \in H_\R$ extends to a $^*$-isomorphism from $A_{\pi \oplus \pi}$ onto $A_\pi \otimes A_\pi$, that intertwines the actions $\sigma_{\pi \oplus \pi}$ and $\sigma_\pi \otimes \sigma_\pi$.
\end{proof}

Using an explicit construction of the Gaussian action (\emph{e.g.} \cite{ps}), one can see that for a representation $\pi$ of $\Gamma$, $L^2(A_\pi,\tau)$ is isomorphic (as a $\Gamma$-representation) to the symmetric Fock space $\mathcal{S}(H) = \C\Omega \oplus \bigoplus_{n \geq 1} H^{\odot n}$ of $H$. From that remark follows the following result (\cite{ps}).\\
Here $\sigma^0_\pi$ denotes the unitary representation of $\Gamma$ on $L^2(A_\pi,\tau)\ominus \C$ induced by $\sigma_\pi$.

\begin{prop}[Peterson-Sinclair, \cite{ps}]
\label{stableproperties}
Let $\pi$ a representation of $\Gamma$. Let $\mathcal P$ be any property in the following list :
\begin{enumerate}
\item being mixing ;
\item being mixing relative to a family $\mathcal S$ of subgroups of $\Gamma$ ;
\item being tempered.
\end{enumerate}
Then $\pi$ has property $\mathcal P$ if and only if $\sigma^0_\pi$ does.
\end{prop}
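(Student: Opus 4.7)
The plan is to use the concrete description of $L^2(A_\pi, \tau) \ominus \mathbb{C}$ as the (reduced) symmetric Fock space
$$\bigoplus_{n \geq 1} H_{\mathbb{C}}^{\odot n}$$
of the complexification $H_{\mathbb{C}}$ of $H_{\mathbb{R}}$, where $\Gamma$ acts by symmetric tensor powers of $\pi$. Once this identification is in hand, each of the three implications ``$\sigma_\pi^0$ has $\mathcal{P}$ $\Rightarrow$ $\pi$ has $\mathcal{P}$'' is immediate, since $\pi$ appears as a subrepresentation of $\sigma_\pi^0$ (the $n=1$ component) and every property in the list is inherited by subrepresentations. Hence the real work is to show the converse, which reduces to verifying that each of the three properties is stable under (i) taking (symmetric) tensor powers and (ii) taking countable direct sums.

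For temperedness, the key observation is the well-known absorption property $\lambda_\Gamma \otimes \rho \simeq \lambda_\Gamma^{(\dim \rho)}$ for any representation $\rho$. If $\pi \prec \lambda_\Gamma$, this implies by continuity of tensor products in the Fell topology that $\pi^{\otimes n} \prec \lambda_\Gamma^{(\infty)} \prec \lambda_\Gamma$, so every $\pi^{\otimes n}$ (and hence its subrepresentation $\pi^{\odot n}$) is tempered; a countable direct sum of tempered representations is again tempered for the same absorption reason, which handles $\sigma_\pi^0$.

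For mixing and for mixing relative to $\mathcal{S}$, I would argue directly on matrix coefficients. On elementary tensors $\xi = \xi_1 \otimes \cdots \otimes \xi_n$ and $\eta = \eta_1 \otimes \cdots \otimes \eta_n$, the matrix coefficient in $\pi^{\otimes n}$ factors as $\prod_{i=1}^n \langle \pi(g)\xi_i, \eta_i \rangle$. Given $\varepsilon > 0$, the relative mixing assumption produces for $i=1$ a subset $F \subset \Gamma$ small relative to $\mathcal{S}$ such that $|\langle \pi(g)\xi_1, \eta_1\rangle| < \varepsilon / \prod_{i \geq 2} \|\xi_i\| \|\eta_i\|$ outside $F$; bounding the remaining factors trivially yields the required estimate for the product on $\Gamma \setminus F$. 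A density argument extends this to arbitrary vectors in $H_{\mathbb{C}}^{\otimes n}$, and restricting to $H_{\mathbb{C}}^{\odot n}$ costs nothing. For the direct sum over $n$, given $\xi, \eta \in \sigma_\pi^0$ I would approximate them in norm by vectors $\xi', \eta'$ supported on finitely many components $H_{\mathbb{C}}^{\odot n}$; the matrix coefficient $\langle \sigma_\pi^0(g)\xi', \eta' \rangle$ is then a finite sum of matrix coefficients from the $\pi^{\odot n}$, each of which is small outside a small-relative-to-$\mathcal{S}$ set, hence so is their sum (a finite union of small sets being small). The usual $\varepsilon/3$ argument combining this with $\|\xi-\xi'\|, \|\eta-\eta'\| < \varepsilon$ completes the proof; ordinary mixing is the same argument with $\mathcal{S} = \{\{e\}\}$ and ``small set'' replaced by ``finite set.''

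The main subtlety will be the bookkeeping in the relative mixing step for tensor powers: one must be slightly careful that the exceptional set constructed remains of the form ``small relative to $\mathcal{S}$,'' i.e. a finite union of double cosets $g_i \Sigma_i h_i$. This is harmless because only one factor needs to be made small and the rest can be bounded crudely by their norms, so no intersection of small sets (which need not be small) is ever taken. The rest of the argument is essentially routine.
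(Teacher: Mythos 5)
Your proposal is correct and follows exactly the route the paper takes (and attributes to Peterson--Sinclair): identify $L^2(A_\pi,\tau)\ominus\C$ with the reduced symmetric Fock space $\bigoplus_{n\geq 1}H_{\C}^{\odot n}$ and check that each property passes to tensor powers and countable direct sums, with Fell absorption handling temperedness and the one-small-factor estimate plus closure of small sets under finite unions handling (relative) mixing. The paper merely states the Fock-space decomposition and asserts the proposition follows; your write-up supplies precisely the routine verifications that are being elided.
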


As pointed out by Sinclair (\cite{sinclair1}), the previous proposition is also valid for the property : ``having a tensor power which is tempered''.

As promised in the introduction, we end this section by showing, for a large class of groups the existence of Gaussian actions satisfying the assumptions of Theorems A and B, but which are not Bernoulli actions.

\begin{prop}
\label{nonbernoulli1}
If $\pi$ is a strongly $\ell^p$ representation, $p > 2$ which is not tempered, the associated Gaussian action is not a Bernoulli action (with diffuse basis). 
\end{prop}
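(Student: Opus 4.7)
My plan is a proof by contradiction. Suppose that $\sigma_\pi$ is isomorphic, as a pmp action, to a (generalized) Bernoulli shift $\Gamma\curvearrowright([0,1],\text{Leb})^I$ for some countable $\Gamma$-set $I$. By Example \ref{bernoulli}, this Bernoulli shift is itself the Gaussian action associated to the permutation representation $\lambda_I:\Gamma\to\mathcal O(\ell^2_\R(I))$, so $\sigma_\pi\simeq\sigma_{\lambda_I}$ and in particular the Koopman representations on $L^2\ominus\C$ agree: $\sigma^0_\pi\simeq\sigma^0_{\lambda_I}$.

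The next step is to transfer spectral properties across this isomorphism using Proposition \ref{stableproperties} together with Sinclair's extension of it to the property ``having a tempered tensor power''. Since $\pi$ is strongly $\ell^p$, it admits a tempered tensor power by \cite{sinclair1}; applying the extended Proposition \ref{stableproperties} first to $\pi$ and then backwards to $\lambda_I$ (via the Koopman isomorphism), I obtain that some $\lambda_I^{\otimes n}$ is tempered. Symmetrically, the assumption that $\pi$ is not tempered forces $\sigma^0_\pi$, hence $\sigma^0_{\lambda_I}$, and finally $\lambda_I$ itself, to be non-tempered.

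To reach a contradiction, I would analyze $\lambda_I$ via orbit decomposition: $\lambda_I\simeq\bigoplus_{O\in\Gamma\backslash I}\Ind_{\Sigma_O}^\Gamma 1$, where $\Sigma_O$ denotes the stabilizer of a representative of the orbit $O$. Invoking the classical fact that (for discrete $\Gamma$) $\Ind_\Sigma^\Gamma 1$ is tempered if and only if $\Sigma$ is amenable, non-temperedness of $\lambda_I$ produces some orbit $O_*$ with non-amenable stabilizer $\Sigma_*$. Picking $i_*\in O_*$, the diagonal $\Gamma$-orbit of $(i_*,\dots,i_*)\in I^n$ still has stabilizer $\Sigma_*$ for every $n\geq 1$, so $\lambda_I^{\otimes n}\simeq\lambda_{I^n}$ contains $\Ind_{\Sigma_*}^\Gamma 1$ as a direct summand and is therefore non-tempered. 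This holds for every $n$, contradicting the temperedness of some $\lambda_I^{\otimes n}$ established above.

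The main non-trivial input is the transfer step: Proposition \ref{stableproperties} and Sinclair's tensor power variant are what allow one to push the question from Koopman representations of Gaussian actions back to the generating orthogonal representation $\lambda_I$. Once that reduction is in place, the contradiction rests on the elementary observation that diagonal stabilizers in $I^n$ inherit non-amenability from stabilizers in $I$, which rules out any tempered tensor power of a non-tempered permutation representation.
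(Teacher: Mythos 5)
Your proof is correct, but it follows a genuinely different route from the paper's. The paper exploits the \emph{mixing} property: a strongly $\ell^p$ representation is mixing, mixing transfers to $\lambda_I$ through Proposition \ref{stableproperties} (via the common Koopman representation), and a mixing permutation representation has finite stabilizers, hence is automatically tempered; transferring temperedness back to $\pi$ gives the contradiction in two lines. You instead transfer the property ``has a tempered tensor power'' (using Sinclair's extension of Proposition \ref{stableproperties}) together with non-temperedness, and then rule this combination out for permutation representations by the orbit decomposition $\lambda_I\simeq\bigoplus_O\Ind_{\Sigma_O}^\Gamma 1$, the Hulanicki-type equivalence ``$\Ind_\Sigma^\Gamma 1$ tempered $\Leftrightarrow$ $\Sigma$ amenable'', and the observation that the diagonal orbit of $(i_*,\dots,i_*)$ in $I^n$ keeps the non-amenable stabilizer $\Sigma_*$, so no tensor power of $\lambda_I$ can become tempered. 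All of these steps check out: the two transfer steps are exactly the content of Proposition \ref{stableproperties} and its tensor-power variant, and the converse direction of the amenability criterion follows by restricting $\Ind_\Sigma^\Gamma 1\prec\lambda_\Gamma$ to $\Sigma$. What your approach buys is a structural statement of independent interest --- a non-tempered permutation representation has no tempered tensor power --- and it applies verbatim to any non-tempered representation admitting a tempered tensor power, whereas the paper's argument applies to any mixing non-tempered representation; for strongly $\ell^p$ representations both hypotheses hold, but the paper's version is shorter since it avoids the induction/amenability machinery entirely.
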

\begin{proof}
Assume that two representations $\pi$ and $\rho$ induce conjugate Gaussian actions. Then Proposition \ref{stableproperties} implies that $\pi$ is mixing (\emph{resp.} tempered) if and only if $\rho$ is mixing (\emph{resp.} tempered). But for a representation $\Gamma \rightarrow \mathcal O(\ell^2(I))$ coming from an action $\Gamma \curvearrowright I$, being mixing implies being tempered (because the stabilizers have to be finite).\\
Therefore if a representation is mixing but not tempered, its Gaussian action cannot be conjugate to a generalized Bernoulli action with diffuse basis.
\end{proof}

\begin{prop}[Bekka]
\label{nonbernoulli2}
Every lattice $\Gamma$ in a non-compact, simple Lie group $G$ with finite center admits a unitary representation which is strongly $\ell^p$ for some $p > 2$, but not tempered.
\end{prop}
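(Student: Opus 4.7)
The plan is to obtain $\pi$ by restricting to $\Gamma$ a well-chosen unitary representation of $G$. Let $K$ denote a maximal compact subgroup of $G$. First I would produce an irreducible unitary representation $\rho$ of $G$ on a Hilbert space $H_\rho$ that is not tempered but whose $K$-finite matrix coefficients all lie in $L^q(G)$ for some $q \in (2, \infty)$. In the rank one case, a spherical complementary series representation $\rho_s$ does the job: for $0 < s < 1$ its matrix coefficients decay like $a_t^{-(1-s)}$ along the Cartan direction, so integration against the Haar measure shows they lie in $L^q(G)$ exactly for $q > 2/(1-s)$, while temperedness would require $L^{2+\varepsilon}$-decay and fails as soon as $s > 0$. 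In higher rank, one appeals to Cowling's uniform estimate: there exists $p_0 = p_0(G) < \infty$ such that the $K$-finite matrix coefficients of every nontrivial irreducible unitary representation of $G$ lie in $L^{p_0+\varepsilon}(G)$ for all $\varepsilon > 0$; one then picks any non-tempered irreducible unitary representation, for instance a complementary series attached to a minimal parabolic.

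Next I would check that $\pi := \rho|_\Gamma$ is not tempered. If one had $\pi \prec \lambda_\Gamma$, Fell's continuity of induction would give $\Ind_\Gamma^G \pi \prec \Ind_\Gamma^G \lambda_\Gamma \cong \lambda_G$. The standard isomorphism $\Ind_\Gamma^G(\rho|_\Gamma) \cong \rho \otimes L^2(\Gamma \backslash G)$, combined with the fact that the trivial representation of $G$ sits inside $L^2(\Gamma \backslash G)$ (since $\Gamma$ has finite covolume), would then force $\rho \prec \lambda_G$, contradicting the non-temperedness of $\rho$.

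To get the strong $\ell^p$ property for $\pi$, I would take as dense subspace $H_0 \subset H_\rho$ the space of $K$-finite vectors. For $\xi, \eta \in H_0$ the matrix coefficient $\phi(g) = \langle \rho(g)\xi, \eta \rangle$ is already known to lie in $L^q(G)$, and I would need to deduce $\phi|_\Gamma \in \ell^q(\Gamma)$. This reduces to a lattice summation inequality $\sum_{\gamma \in \Gamma} |\phi(\gamma)|^q \leq C \,\|\phi\|_{L^q(G)}^q$ valid for $K$-finite matrix coefficients, which follows from an elliptic/Sobolev mean value estimate: $K$-finite vectors are smooth, their matrix coefficients are joint eigenfunctions of the center of the enveloping algebra, hence $|\phi(\gamma)|^q$ is controlled by its mean over a small fixed neighborhood $\gamma U$ of each $\gamma$; choosing $U$ so that the translates $\gamma U$ have bounded multiplicity, the $\ell^q$ sum is dominated by the $L^q$ integral.

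The main obstacle is this last lattice summation step for non-cocompact lattices, where the behaviour of $\phi$ in the cusps must be controlled via reduction theory and Siegel sets rather than by a naive bounded-multiplicity covering argument. Once this inequality is in hand, strong $\ell^p$-ness of $\pi$ is immediate, and combined with the non-temperedness obtained via the induction argument, the proposition follows.
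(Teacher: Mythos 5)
Your proposal follows essentially the same route as the paper: choose a non-tempered irreducible unitary representation of $G$ whose $K$-finite matrix coefficients lie in $L^p(G)$ (via Cowling's theorems, with \cite{CHH} governing temperedness), restrict it to $\Gamma$, and rule out temperedness of the restriction by inducing back to $G$ and using that the trivial representation is contained in $L^2(\Gamma \backslash G)$ since $\Gamma$ has finite covolume. The one ``obstacle'' you flag is not actually one: discreteness of $\Gamma$ alone lets you choose $U$ with the translates $\gamma U$ pairwise disjoint, regardless of cocompactness, so your mean-value/bounded-multiplicity argument already closes the lattice summation step that the paper dispatches as ``easy to check'' (stability of strong $\ell^p$ under restriction to a lattice), and no Siegel set analysis is needed.
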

\begin{proof}
It is a known fact that $G$ admits an irreducible representation $\pi$ with no invariant vectors which is not strongly $\ell^q$, for some $q > 2$. By \cite{CHH}, $\pi$ is not weakly contained in the regular representation of $G$. But by Th\'eor\`eme 2.4.2 and Th\'eor\`eme 2.5.2 in \cite{Cowling}, there exist a $p > 2$ such that $\pi$ is strongly $\ell^p$.\\
We check that $\pi_{|\Gamma}$ satisfies the proposition.
It is easy to check that being strongly $\ell^p$ is stable by restriction to a lattice, so we are left to prove that $\pi_{|\Gamma}$ is not weakly contained in the left regular representation $\lambda_\Gamma$ of $\Gamma$. Denote by $\lambda_G$ the left regular representation of $G$.\\
Assume by contradiction that $\pi_{|\Gamma}$ is weakly contained in $\lambda_\Gamma$. Then by stability of weak containment under induction, we get that $\Ind_\Gamma^G(\pi_{|\Gamma})$ is weakly contained in $\lambda_G = \Ind_\Gamma^G(\lambda_\Gamma)$. However, $\Ind_\Gamma^G(\pi_{|\Gamma}) = \pi \otimes \Ind_\Gamma^G(1_\Gamma)$, and since $\Gamma$ has finite co-volume in $G$, the trivial $G$-representation is contained in $\Ind_\Gamma^G(1_\Gamma) = \lambda_{G/\Gamma}$. Altogether, we get that $\pi$ is weakly contained in $\lambda_G$, which is absurd.
\end{proof}

\begin{rem}
Every ICC lattice $\Gamma$ in $Sp(n,1)$ admits a strongly $\ell^p$ representation such that the crossed-product von Neumann algebra of the associated Gaussian action is not isomorphic to the crossed-product algebra of a Bernoulli action with diffuse basis.\\
{\it Indeed}, propositions \ref{nonbernoulli1} and \ref{nonbernoulli2} provide a strongly $\ell^p$ ($p > 2$) representation $\pi$ such that the associated Gaussian action $\sigma$ is not conjugate to a Bernoulli action (with diffuse basis). But Theorem 0.3 in \cite{popasup07} applies, so that $\sigma$ is OE-superrigid. Indeed, $\Gamma$ has property (T) and is ICC, $\sigma$ is free and mixing because $\pi$ is mixing (hence faithful since $\Gamma$ is ICC), and the next section shows that Gaussian actions are $s$-malleable in the sense of Popa.
Moreover, \cite{PV12} implies that since $\Gamma$ is hyperbolic, the crossed-product von Neumann algebra associated to $\sigma$ admits a unique Cartan subalgebra up to unitary conjugacy. By \cite{FMII}, we obtain that $\sigma$ is W$^*$-superrigid.
\end{rem}

\section{A localisation theorem for rigid subalgebras in the crossed-product}

The goal of this section is to prove Theorem \ref{adapted IPV}, and Theorem \ref{ultraproduct} allowing to locate rigid subalgebras in the crossed-product von Neumann $M$ associated to a Gaussian action, or in its ultraproduct $M^\omega$.

\subsection{The malleable deformation associated to a Gaussian action}
\label{notationsutiles}

From now on, $\pi : \Gamma \rightarrow \mathcal{O}(H_\R)$ will denote a fixed orthogonal representation of a countable discrete group $\Gamma$ on a separable real Hilbert space. In this fixed situation, we will remove all the $\pi$'s in the notations, and simply denote by $\sigma : \Gamma \curvearrowright A$ the Gaussian action associated to $\pi$. We use the standard $s$-malleable deformation of $\sigma$ (\cite{ps}). We recall the construction for convenience.\\

Consider the action $\sigma \otimes \sigma$ of $\Gamma$ on $A \overline{\otimes} A$. By lemma \ref{tensor product}, this is the Gaussian action associated to $\pi \oplus \pi$.\\
Define on $H_\R \oplus H_\R$ the operators $$\rho = \begin{pmatrix} 1&0\\0&-1 \end{pmatrix} \text{ and } \theta_t = \begin{pmatrix} \cos(\pi t/2) & -\sin(\pi t /2) \\ \sin(\pi t /2) & \cos(\pi t/2) \end{pmatrix}, \, t \in \mathbb{R}.$$
Here are some trivial facts about these operators :
\begin{itemize}
\item $\forall t \in \mathbb{R}, \, \rho \circ \theta_t = \theta_{-t} \circ \rho$ ;
\item $\theta_t$ and $\rho$ commute with $(\pi \oplus \pi)(g)$ for all $g \in \Gamma$, $t \in \mathbb{R}$ ;
\item $\forall s,t \in \mathbb{R}, \, \theta_s \circ \theta_t = \theta_{t+s}$.
\end{itemize}
Therefore $\rho$ and $(\theta_t)$ induce respectively an automorphism $\beta$ and a one-parameter family $(\alpha_t)$ of automorphisms of $A \ootimes A$ that commute with $\sigma \otimes \sigma$, and such that $ \beta \circ \alpha_t = \alpha_{-t} \circ \beta$ for all $t \in \mathbb{R}$. Observe also that $\alpha_1 = \varepsilon \circ \beta$, where $\varepsilon$ is the flip $a \otimes b \mapsto b \otimes a$.

Now consider the crossed-product von Neumann algebras $M = A \rtimes \Gamma$ and $\tilde{M} = (A \ootimes A) \rtimes_{\sigma \otimes \sigma} \Gamma$. View $M$ as a subalgebra of $\tilde{M}$ using the identification $M \simeq (A \ootimes 1)\rtimes \Gamma$.
The automorphisms defined above then extend to automorphisms of $\tilde{M}$ still denoted $(\alpha_t)$ and $\beta$, in a way such that $\alpha_t(u_g) = \beta(u_g) = u_g$, for all $g \in \Gamma$.

Being $s$-malleable, this deformation satisfies Popa's transversality property.

\begin{lem}[Popa's transversality argument, \cite{popasup}]
\label{transversality}
For any $x \in M$ and $t \in \R$ one has $$\Vert x - \alpha_{2t}(x) \Vert_2 \leq 2\Vert \alpha_t(x) - E_M \circ \alpha_t(x)\Vert_2.$$
\end{lem}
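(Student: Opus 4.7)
The key structural facts to leverage are: (i) $\beta$ is an involutive, trace-preserving automorphism of $\tilde M$; (ii) $\beta$ fixes $M = (A \ootimes 1) \rtimes \Gamma$ pointwise, because $\rho$ is the identity on the first copy of $H_\R$ and $\beta(u_g) = u_g$; and (iii) $\beta \circ \alpha_t = \alpha_{-t} \circ \beta$. The plan is to turn the left-hand side $\Vert x - \alpha_{2t}(x)\Vert_2$ into a difference between a vector and its image under $\beta$, then split that vector into its $M$-part and its orthogonal complement.

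The first step is to observe that $\beta$ commutes with the conditional expectation $E_M$. Indeed, since $\beta$ globally preserves $M$ and preserves the trace, $\beta \circ E_M \circ \beta^{-1}$ is a trace-preserving conditional expectation onto $M$, hence equals $E_M$ by uniqueness. Equivalently, $\beta$ preserves the orthogonal decomposition $L^2(\tilde M) = L^2(M) \oplus L^2(M)^\perp$ and acts as the identity on $L^2(M)$.

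Now take $x \in M$. Applying the $L^2$-isometry $\alpha_{-t}$ and using (iii) together with $\beta(x)=x$, one computes
\[
\Vert x - \alpha_{2t}(x) \Vert_2 \;=\; \Vert \alpha_{-t}(x) - \alpha_t(x) \Vert_2 \;=\; \Vert \beta(\alpha_t(x)) - \alpha_t(x) \Vert_2.
\]
Setting $y := \alpha_t(x)$ and decomposing $y = E_M(y) + y^\perp$ with $y^\perp := y - E_M(y)$, the first step gives $\beta(y) = E_M(y) + \beta(y^\perp)$, hence $\beta(y) - y = \beta(y^\perp) - y^\perp$. Since $\beta$ is an $L^2$-isometry, the triangle inequality yields
\[
\Vert \beta(y) - y \Vert_2 \;\leq\; \Vert \beta(y^\perp) \Vert_2 + \Vert y^\perp \Vert_2 \;=\; 2 \Vert y^\perp \Vert_2 \;=\; 2 \Vert \alpha_t(x) - E_M(\alpha_t(x))\Vert_2,
\]
which is the desired inequality.

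There is essentially no obstacle here beyond carefully recording the properties of $\beta$; the only point that deserves a line of justification is the commutation $\beta \circ E_M = E_M \circ \beta$, which follows from the uniqueness of the trace-preserving conditional expectation onto $M$ once one knows $\beta(M) = M$ and $\tau \circ \beta = \tau$.
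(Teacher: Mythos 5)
Your proof is correct and is precisely the standard transversality argument from Popa's paper \cite{popasup}, which this article cites without reproducing the proof: one uses that $\beta$ is a trace-preserving involution fixing $M$ pointwise and anticommuting with the flow to rewrite $x-\alpha_{2t}(x)$ as $\beta(y^\perp)-y^\perp$ with $y=\alpha_t(x)$, and then applies the triangle inequality. All the properties of $\beta$ you invoke (pointwise fixing of $A\ootimes 1$ and of the $u_g$, trace preservation, commutation with $E_M$) do hold in the Gaussian setting as set up in Section 3.1, so there is nothing to add.
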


We then check in the following two lemmas that the inclusion $M \subset \tilde M$ satisfies the standard spectral gap property (see \cite{popasup}), which goes with rigidity phenomena.

\begin{lem}[Spectral Gap 1]
\label{Spectral Gap}
Let $M \subset \tilde{M}$ be finite von Neumann algebras and put $H = L^2(\tilde{M}) \ominus L^2(M)$, with the natural $M$-$M$ bimodule structure coming from $_ML^2(\tilde{M})_M$. Assume that some tensor power of $_MH_M$ is weakly contained in the coarse bimodule :
$$\exists K \geq 1, \: H^{\otimes_M K} := H \otimes_M \cdots \otimes_M H \subset_{w} L^2(M) \otimes L^2(M).$$
Let $\omega \in \beta \N \setminus \N$ be a free ultrafilter on $\N$. Then for every subalgebra $Q \subset M$ with no amenable direct summand, one has $Q' \cap \tilde{M}^\omega \subset M^\omega$.
\end{lem}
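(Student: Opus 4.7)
The plan is to run Popa's spectral-gap argument \cite{popasup} in the bimodule $H^{\otimes_M K}$, with a tensor-power trick that lifts an almost-central vector in $H$ to one living in a bimodule weakly contained in the coarse one. I would argue by contradiction: suppose there is $y \in (Q' \cap \tilde{M}^\omega) \setminus M^\omega$. Replacing $y$ by the real or imaginary part of $y - E_{M^\omega}(y)$, we may assume $y$ is self-adjoint, nonzero, $E_{M^\omega}$-centred, and commutes with $\mathcal U(Q)$. Lift $y$ to a uniformly bounded sequence $(y_n)_n$ with $y_n \in \tilde{M}$ self-adjoint and $E_M(y_n) = 0$; then each $y_n$ is a bounded vector of $H \subset L^2(\tilde{M})$ with $\lim_\omega \|y_n\|_2 = \|y\|_2 > 0$ and $\lim_\omega \|[u, y_n]\|_2 = 0$ for every $u \in \mathcal U(Q)$.

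I would then form $\eta_n := y_n \otimes_M y_n \otimes_M \cdots \otimes_M y_n \in H^{\otimes_M K}$. The Connes-fusion identity $y_n u \otimes_M y_n = y_n \otimes_M u y_n$ (valid for $u \in M$) yields the telescoping formula
$$
u \eta_n - \eta_n u = \sum_{j=1}^K y_n^{\otimes_M(j-1)} \otimes_M [u, y_n] \otimes_M y_n^{\otimes_M(K-j)},
$$
so $\|u \eta_n - \eta_n u\| \leq K \|y_n\|_\infty^{K-1} \|[u, y_n]\|_2 \to_\omega 0$ for every $u \in \mathcal U(Q)$. Since $Q$ has no amenable direct summand and $H^{\otimes_M K}$ is weakly contained in the coarse $M$-$M$-bimodule, Popa's spectral-gap principle (a consequence of Connes's characterisation of amenability) supplies $u_1, \ldots, u_N \in \mathcal U(Q)$ and $c > 0$ with
$$\|\zeta\|^2 \leq c \sum_{i=1}^N \|u_i \zeta - \zeta u_i\|^2$$
for every $\zeta \in H^{\otimes_M K}$; applied to $\eta_n$, this forces $\lim_\omega \|\eta_n\| = 0$.

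The contradiction then comes from a lower bound of the form $\|\eta_n\| \gtrsim f(\|y_n\|_2)$ with $f > 0$ away from $0$. For $K = 2$, the direct Connes-fusion computation
$$
\|y_n \otimes_M y_n\|^2 = \|E_M(y_n^2)\|_2^2 \geq \tau\bigl(E_M(y_n^2)\bigr)^2 = \|y_n\|_2^4
$$
suffices. The general case reduces to $K = 2$ by observing that the coarse $M$-$M$-bimodule is absorbing under Connes fusion, so $H^{\otimes_M 2K}$ is still weakly contained in the coarse one; one then applies the $K = 2$ argument to $\eta_n \otimes_M \eta_n \in H^{\otimes_M 2K}$ (which remains asymptotically $Q$-central by the same telescoping) together with a left-bounded Connes-fusion inner-product computation, valid because $y_n \in \tilde{M}$. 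Setting up this lower bound for general $K$ is the main technical obstacle; once it is in place, the rest is a formal spectral-gap argument.
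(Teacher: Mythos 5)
Your overall architecture --- lift $y$ to a bounded self-adjoint sequence $(y_n)$ orthogonal to $M$, form $\eta_n=y_n\otimes_M\cdots\otimes_M y_n$, check almost $Q$-centrality by telescoping, and play a norm lower bound against weak containment in the coarse bimodule --- is the same as the paper's. The genuine gap is the ``spectral-gap principle'' you invoke: the uniform inequality $\Vert\zeta\Vert^2\leq c\sum_i\Vert u_i\zeta-\zeta u_i\Vert^2$ for \emph{all} $\zeta\in H^{\otimes_M K}$ does not follow from ``$Q$ has no amenable direct summand'' alone. Its failure only produces a net of almost $Q$-central \emph{unit} vectors, and from such a net Connes' characterisation gives a $\mathcal U(Q)$-invariant state on $C^*(Q,Q^{\mathrm{op}})$ whose restriction to $Q$ is a tracial state that may be \emph{singular} when $\mathcal Z(Q)$ is infinite dimensional: the almost-central mass can escape to infinity in the center without certifying amenability of any corner $Qz$, $z\in\mathcal Z(Q)$. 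The usable form of the principle requires the almost-central vectors to be left-and-right subtracial, and even then it only yields amenability of a corner $Qq$ after one shows that the limit state is normal, nonzero, and can be localized by a central spectral projection. That is exactly the content of the paper's proof: it computes $\langle a\,\xi_n,\xi_n\rangle=\tau(a\,y_K^{(n)})$ with $y_1^{(n)}=E_M(x_n^2)$, $y_{i+1}^{(n)}=E_M(x_ny_i^{(n)}x_n)$, extracts a weak limit $b\in (Q'\cap M)_+$ with $\tau(b)>0$, sets $q=\chi_{[\delta,\infty)}(E_Q(b))\in\mathcal Z(Q)$ and $c\in\mathcal Z(Q)_+$ with $q=cE_Q(b)$, and only then concludes that $L^2(Qq)\subset_w H^{\otimes_MK}\subset_w L^2(M)\otimes L^2(M)$, i.e.\ that $Qq$ is amenable. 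So the step you black-box is essentially the whole proof, and in the form you state it (for non-factorial $Q$) it is not a valid deduction.

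A secondary problem is the lower bound for general $K$: the $K=2$ estimate $\Vert\eta\otimes_M\eta\Vert^2\geq\Vert\eta\Vert^4$ bounds the \emph{larger} tensor power from below by the smaller one, so ``applying the $K=2$ argument to $\eta_n\otimes_M\eta_n$'' runs in the wrong direction and does not produce $\Vert\eta_n\Vert\gtrsim\Vert y_n\Vert_2^K$. The paper resolves this by assuming $K=2^k$ (harmless, since weak containment in the coarse bimodule persists when tensor factors are added) and iterating Cauchy--Schwarz downward through the symmetry $\tau(y_i^{(n)}y_{j+1}^{(n)})=\tau(y_{i+1}^{(n)}y_j^{(n)})$, which gives
$$\Vert y_n^{\otimes_M 2^k}\Vert^2=\tau(y_{2^k}^{(n)})=\tau\bigl(y_{2^{k-1}}^{(n)}y_{2^{k-1}}^{(n)}\bigr)\geq\tau\bigl(y_{2^{k-1}}^{(n)}\bigr)^2\geq\cdots\geq\tau(x_n^2)^{K/2}\geq\varepsilon^{K}.$$
This computation is not an optional technicality: it is the same recursion that identifies the limit state as $\tau(\cdot\,b)$ and hence powers the localization step you are missing.
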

\begin{proof}
First, note that if $H^{\otimes_M K}$ is weakly contained in the coarse $M$-$M$ bimodule, then this is also the case of $H^{\otimes_M K+1}$. Hence one can assume that $K$ is of the form $K = 2^k$, which will be used later.\\
Now fix $Q \subset M$ such that $Q' \cap \tilde{M}^\omega \nsubseteq M^\omega$. We will show that $Q$ has an amenable direct summand.\\
Since $Q' \cap \tilde{M}^\omega \nsubseteq M^\omega$, there exist a sequence $x_n \in (\tilde{M})_1$ such that :
\begin{itemize}
\item $x_n \in L^2(\tilde{M}) \ominus L^2(M)$, for all $n \in \N$ ;
\item There exists $\varepsilon > 0$ such that $\Vert x_n \Vert_2 \geq \epsilon$ for  all $n \in \N$ ;
\item $\lim_n \Vert [u,x_n] \Vert_2 = 0$ for all $u \in \mathcal{U}(Q)$ ;
\item $x_n = x_n^*$.
\end{itemize}
Since $x_n \in (\tilde{M})_1$ for all $n \in \N$, the vectors $x_n \in H$ are left and right uniformly bounded, and one can consider the sequence $\xi_n = x_n \otimes_M \cdots \otimes_M x_n \in H^{\otimes_M K}$.\\
One checks that these are almost $Q$-central vectors, because the $x_n$'s are. Let's show that up to some slight modifications they are $Qq$-tracial as well, for some $q \in \mathcal{Z}(Q)$.\\

For all $n$, define by induction elements $y^{(n)}_i\in M, \, i= 1, \cdots, K$ by $y_1^{(n)} = E_M(x_n^2)$, $y_{i+1}^{(n)} = E_M(x_ny_i^{(n)}x_n)$.
Then an easy computation gives, for all $n \in \N$ and $a \in M$,
$$\langle a \xi_n , \xi_n \rangle = \langle a x_ny_{K-1}^{(n)}, x_n\rangle = \tau(ay_K^{(n)}).$$
Moreover, for all $n \in \N$, $\Vert x_n \Vert \leq 1$ implies $\Vert y_K^{(n)} \Vert \leq 1$. So taking a subsequence if necessary, one can assume that $(y_K^{(n)})$ converges weakly to some $b \in Q' \cap M_+$.\\
{\it Claim.} $\tau(b) \geq \varepsilon^{2K}$, so that $b \in M$ is a nonzero element.\\
To prove this claim, first observe that for any $1 \leq i,j \leq K-1$, one has :
\begin{align*}
\tau(y_i^{(n)}y_{j+1}^{(n)}) & = \tau(y_i^{(n)}E_M(x_ny_j^{(n)}x_n)) = \tau(y_i^{(n)}x_ny_j^{(n)}x_n)\\
& = \tau(E_M(x_ny_i^{(n)}x_n)y_j^{(n)}) = \tau(y_{i+1}^{(n)}y_j^{(n)}).
\end{align*}
Remembering that $K = 2^k$, the relation above and Cauchy-Schwarz inequality give :
\begin{align*}
\tau(y_K^{(n)}) = \tau(y_{2^k}^{(n)}) & =  \tau(y_{2^{k-1}}^{(n)}y_{2^{k-1}}^{(n)})\\
& \geq \tau(y_{2^{k-1}}^{(n)})^2 \geq \cdots \geq \tau(y_1^{(n)})^{2^(k-1)}\\
& = \tau(x_n^2)^{K/2} \geq \varepsilon^{K}.
\end{align*}
This proves the claim. Therefore there exists $\delta>0$ such that $q = \chi_{[\delta, \infty[}(E_Q(b)) \neq 0$. Note that $q \in \mathcal{Z}(Q)$ and take $c \in \mathcal{Z}(Q)_+$ such that $q = cE_Q(b)$.\\
Finally, we get that the sequence $\eta_n = c^{1/2} \cdot \xi_n \in H^{\otimes_M K}$ satisfies :
\begin{itemize}
\item $(\eta_n)$ is almost $Qq$-tracial : $\forall a \in Qq$, $\lim_n \langle a \eta_n , \eta_n \rangle = \tau(c^{1/2}ac^{1/2}b) = \tau(aq)$.
\item $(\eta_n)$ is almost $Q$-central.
\end{itemize}
Therefore as $Qq$-$Qq$ bimodules, we have : $$L^2(Qq) \subset_w H^{\otimes_M K} \subset_w L^2(M) \otimes L^2(M) \subset_w L^2(Qq) \otimes L^2(Qq),$$
so that $Qq$ is amenable.
\end{proof}

\begin{lem}[Spectral gap 2]
\label{weaklycontained}
Assume that the representation $\pi$ is such that $\pi^{\otimes K} \prec \lambda$ for some $K \geq 1$, then the bimodule $_MH_M = L^2(\tilde{M}) \ominus L^2(M)$ is such that $H^{\otimes_M \, K}$ is weakly contained in the coarse bimodule.
\end{lem}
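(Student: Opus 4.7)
The plan is to identify $H = L^2(\tilde M) \ominus L^2(M)$ as a concrete ``twisted coarse'' $M$-$M$ bimodule built from the $\Gamma$-representation on Fock space, then reduce the weak-containment claim to a representation-theoretic statement about $\pi$, and finally invoke the standard fact that the basic-construction bimodule over a commutative (hence amenable) subalgebra is weakly contained in the coarse bimodule.

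First, using the canonical unitary $\mathcal S(H_\R \oplus H_\R)_\C \simeq \mathcal S(H_\C) \otimes \mathcal S(H_\C)$ together with $L^2(A) \simeq \mathcal S(H_\C)$, I rewrite $L^2(\tilde A) \cong L^2(A) \otimes_\C \mathcal S(H_\C)$ as an $A$-bimodule (with $A$ acting only on the first factor) and as a $\Gamma$-representation (with $\Gamma$ acting diagonally via $\mathcal S(\pi_\C)$). Unpacking the crossed-product formulas, this yields
$$H \;\cong\; L^2(M) \otimes_\C V, \qquad V := \bigoplus_{n \geq 1} (\pi_\C)^{\odot n},$$
endowed with the bimodule structure $\mathcal H_V$ in which the right $M$-action acts on $L^2(M)$ only, while the left $M$-action is given by $u_g \cdot (\xi \otimes v) = u_g \xi \otimes \pi_V(g) v$ and $a \cdot (\xi \otimes v) = a\xi \otimes v$ for $a \in A$, $g \in \Gamma$.

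Second, a direct computation with $L^2(M) \otimes_M L^2(M) \simeq L^2(M)$ shows that Connes fusion is compatible with the tensor product of $\Gamma$-representations: $\mathcal H_W \otimes_M \mathcal H_{W'} \cong \mathcal H_{W \otimes W'}$. Iterating, $H^{\otimes_M K} \cong \mathcal H_{V^{\otimes K}}$. The representation $V^{\otimes K}$ decomposes as $\bigoplus_{n_1, \dots, n_K \geq 1} (\pi_\C)^{\odot n_1} \otimes \cdots \otimes (\pi_\C)^{\odot n_K}$, and each summand embeds in $(\pi_\C)^{\otimes m}$ for some $m = n_1 + \dots + n_K \geq K$. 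The hypothesis $\pi^{\otimes K} \prec \lambda$ combined with Fell's absorption principle $\lambda \otimes \rho \simeq \lambda^{\oplus \dim \rho}$ gives $(\pi_\C)^{\otimes m} \prec \lambda$ for every $m \geq K$, hence $V^{\otimes K} \prec \lambda$ as $\Gamma$-representations.

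Finally, the assignment $W \mapsto \mathcal H_W$ respects weak containment (pointwise convergence of coefficients passes to the bimodule), so $\mathcal H_{V^{\otimes K}} \prec_w \mathcal H_\lambda$. I then identify $\mathcal H_\lambda$ with the basic-construction bimodule $L^2(M) \otimes_A L^2(M)$ via the explicit unitary $\xi \otimes \delta_g \mapsto u_g \otimes u_{g^{-1}}\xi$, which one checks intertwines the left and right actions of both $A$ and the $u_g$'s; and since $A$ is commutative, hence amenable, the standard Popa--Anantharaman-Delaroche fact that the basic construction over an amenable subalgebra is weakly contained in the coarse bimodule yields $L^2(M) \otimes_A L^2(M) \prec_w L^2(M) \otimes L^2(M)$. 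Chaining everything gives $H^{\otimes_M K} \prec_w L^2(M) \otimes L^2(M)$. The main bookkeeping work lies in Step 1 (the Fock-space bimodule identification); all remaining steps are routine.
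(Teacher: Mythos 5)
Your proof is correct and follows essentially the same route as the paper's: both identify $H$ with the twisted bimodule $\mathcal H^{\sigma^0_\pi} \cong L^2(M)\otimes\bigl(\mathcal S(H_{\C})\ominus\C\Omega\bigr)$, use the compatibility of Connes fusion with tensor products of $\Gamma$-representations ($\mathcal H^{\eta_1}\otimes_M\mathcal H^{\eta_2}\cong\mathcal H^{\eta_1\otimes\eta_2}$), deduce that $(\sigma^0_\pi)^{\otimes K}$ is tempered from $\pi^{\otimes K}\prec\lambda$ via Fell absorption, and conclude from amenability of $A$. The only difference is that you spell out the two ingredients the paper outsources to citations, namely Sinclair's observation that a tempered tensor power passes to the Fock-space representation, and Vaes's lemma that $\mathcal H^\eta$ is weakly contained in the coarse bimodule for tempered $\eta$ (via the identification with $L^2(M)\otimes_A L^2(M)$).
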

\begin{proof}
As in the proof of Lemma 3.5 in \cite{vaesonecoho}, for any representation $\eta : \Gamma \rightarrow \mathcal{U}(K)$, define a $M$-$M$ bimodule structure $\mathcal H^\eta$ on the Hilbert space $L^2(M) \otimes K$, by $$(au_g) \cdot (x \otimes \xi) \cdot (bu_h) = au_gxbu_h \otimes \eta_g(\xi), \; \text{for all } a,b \in A,\, g,h \in \Gamma, \, x\in M, \, \xi \in K.$$
Since $A$ is amenable then $\mathcal H^\eta$ is weakly contained in the coarse bimodule whenever $\eta$ is tempered.\\
But remark that the $M$-$M$ bimodule $L^2(\tilde{M}) \ominus L^2(M)$ is isomorphic to $\mathcal H^{\sigma_\pi^0}$, and that for two representation $\eta_1$, $\eta_2$ of $\Gamma$, $\mathcal H^{\eta_1} \otimes_M \mathcal H^{\eta_2} = \mathcal H^{\eta_1 \otimes \eta_2}$. Moreover, from the comment after proposition \ref{stableproperties}, we have that $(\sigma^0_\pi)^{\otimes K}$ is tempered. This ends the proof.
\end{proof}

\subsection{Position of rigid subalgebras in $M$}

Our aim here is to show the following theorem, which is an adaptation of \cite[Theorem 4.2]{ipv} in the framework of Gaussian actions.

\begin{thm}
\label{adapted IPV}
Assume that $\pi$ is mixing relative to a family $\mathcal S$ of subgroups of $\Gamma$. Put $M = A \rtimes \Gamma$ and define $\tilde{M}$ and $(\alpha_t)$ as in the previous subsection.\\
Let $Q \subset pMp$ be a subalgebra such that there exist $z \in \tilde{M}$, $t_0 = 1/2^n$ ($n \geq 0$) and $c > 0$ satisfying
$$\vert \tau(\alpha_{t_0}(u^*)zu)\vert \geq c, \; \text{for all } u \in \mathcal U(Q).$$
Put $P = \mathcal{N}_{pMp}(Q)''$. Then at least one of the following assertions occurs.
\begin{enumerate}
\item $P \prec_M A \rtimes \Sigma$, for some $\Sigma \in \mathcal{S}$ ;
\item $Q \prec_M L\Gamma$. 
\end{enumerate}
Moreover, if the elements of $\mathcal S$ are almost malnormal in the sense of definition \ref{malnormal}, then the above dichotomy can be replaced by :
\begin{enumerate}
\item[(1')] $Q \prec_M \C1$ ;
\item[(2')] $P \prec_M A \rtimes \Sigma$, for some $\Sigma \in \mathcal{S}$ ;
\item[(3')] $P \prec_M L\Gamma$.
\end{enumerate}
\end{thm}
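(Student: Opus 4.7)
The proof will follow the strategy of \cite[Theorem 4.2]{ipv}, adapted to the Gaussian $s$-malleable deformation $(\alpha_t)$ introduced in Section \ref{notationsutiles}. It proceeds in three stages: (i) from the rigidity hypothesis, extract a non-zero intertwiner between $Q$ and $\alpha_{t_0}(Q)$ inside $\tilde{M}$; (ii) use Popa's transversality lemma to double the deformation parameter until one reaches $t=1$; (iii) exploit the identification $\alpha_1(M) = (1 \ootimes A) \rtimes \Gamma$ together with the relative mixing of $\pi$ to relocate the intertwiner inside $M$ and read off the dichotomy.

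For Step (i), I would run Popa's convex-hull construction. Let $K \subset L^2(\tilde{M})$ be the $\|\cdot\|_2$-closed convex hull of $\{\alpha_{t_0}(u^*) z u : u \in \mathcal{U}(Q)\}$, and let $\eta$ be its unique element of minimal $\|\cdot\|_2$-norm. The hypothesis forces $|\tau(\eta)| \geq c$, so $\eta \neq 0$; uniform operator-boundedness of the generators by $\|z\|$ places $\eta$ in $\tilde{M}$; and the invariance $\alpha_{t_0}(v^*) K v = K$ yields $\alpha_{t_0}(v) \eta = \eta v$ for every $v \in \mathcal{U}(Q)$. Polar decomposition then produces a non-zero partial isometry $w_0 \in \tilde{M}$ implementing $\alpha_{t_0}(Q) \prec_{\tilde{M}} Q$ in the sense of Theorem \ref{intertwining}. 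For Step (ii), the transversality bound of Lemma \ref{transversality} applied to elements of $\mathcal{U}(Q)$ lets us promote this intertwiner to one at time $2t_0$: from $w_0$ one extracts a new element $z_1 \in \tilde{M}$ and a constant $c_1 > 0$ satisfying the same rigidity bound at time $2t_0$, and Step (i) applies again. Iterating the doubling $n$ times reaches $2^n t_0 = 1$ and produces a non-zero partial isometry $v \in p' \tilde{M} q'$ and a normal unital $*$-homomorphism $\Psi : p'Qp' \to q' \alpha_1(Q) q'$ with $x v = v \Psi(x)$.

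In Step (iii), the key geometric input is $\alpha_1(M) = (1 \ootimes A) \rtimes \Gamma$ inside $\tilde{M}$, so that $M \cap \alpha_1(M) = L\Gamma$. Assume $Q \nprec_M L\Gamma$. Then Lemma \ref{intertwiningmixing} provides a net $(u_i) \subset \mathcal{U}(Q)$ whose Fourier projections $P_F(u_i)$ vanish in $L^2$ for every $F \subset \Gamma$ small relative to $\mathcal{S}$. Using the identification of $L^2(\tilde{M}) \ominus L^2(M)$ with the bimodule $\mathcal{H}^{\sigma_\pi^0}$ from Lemma \ref{weaklycontained}, and translating the relative mixing of $\pi$ via Proposition \ref{stableproperties}, this asymptotic vanishing traps any finite-right-dimension $Q$-$\alpha_1(Q)$ sub-bimodule of $L^2(\tilde{M})$ inside the closed linear span of $\bigcup_{\Sigma \in \mathcal{S}} L^2((A \ootimes A) \rtimes \Sigma)$. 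Applying this to the sub-bimodule generated by $v$ together with its translates $\mathcal{N}_{pMp}(Q) \cdot v$, and invoking Lemma \ref{relativeintertwining}, I would conclude that $P \prec_M A \rtimes \Sigma$ for some $\Sigma \in \mathcal{S}$. To upgrade to the almost-malnormal dichotomy, one uses Definition \ref{malnormal}: the intersection $\bigcap_{i=1}^n g_i \Sigma g_i^{-1}$ is finite for appropriate cosets, so an iteration of the previous argument applied to several translates either refines $P \prec_M A \rtimes \Sigma$ to $P \prec_M L\Gamma$, or else collapses $Q$ itself to $\mathbb{C}1$.

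The main obstacle I anticipate is Step (iii): transferring the intertwiner $v \in \tilde{M}$, which a priori lives far outside $M$, into an intertwining statement between subalgebras of $M$. The technical heart is to identify the correct $M$-$M$ sub-bimodule of $L^2(\tilde{M})$ containing $v$, to show that relative mixing of $\pi$ confines it to the pieces $L^2((A \ootimes A) \rtimes \Sigma)$, and simultaneously to accommodate the normaliser action of $P$. A secondary subtlety is making the doubling in Step (ii) quantitative: the constants $c_n$ produced at each stage must stay bounded away from $0$ so that the limiting intertwiner at time $1$ remains non-zero.
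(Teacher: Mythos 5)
Your Step (i) matches the paper's opening move (Popa's convex-hull trick produces a non-zero partial isometry $v_0\in\tilde M$ with $xv_0=v_0\alpha_{t_0}(x)$ for $x\in Q$), but Step (ii) --- the doubling from $t_0$ to $1$ --- is where the real content lies, and the mechanism you propose does not work. Popa's transversality estimate (Lemma \ref{transversality}) bounds $\Vert x-\alpha_{2t}(x)\Vert_2$ by $2\Vert\alpha_t(x)-E_M\circ\alpha_t(x)\Vert_2$; it converts non-uniform convergence of the deformation into lower bounds on $E_M\circ\alpha_t$, and it cannot promote an intertwiner at time $t$ to one at time $2t$. The doubling is instead performed with the grading automorphism $\beta$: since $\beta|_M=\mathrm{id}$ and $\beta\circ\alpha_t=\alpha_{-t}\circ\beta$, if $v$ is $Q$-$\alpha_t(Q)$-finite then $\alpha_t(\beta(v^*)dv)$ is $Q$-$\alpha_{2t}(Q)$-finite for every $d\in\mathcal N_{pMp}(Q)$. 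The crux, which your plan never confronts, is to guarantee $\beta(v^*)dv\neq0$ for some such $d$; this is exactly where the hypothesis that (1) fails is consumed. One checks that the projection $q$ onto $\overline{\spa}\lbrace \mathrm{range}(dv)\, \vert\, d\in\mathcal N_{pMp}(Q)\rbrace$ lies in $P'\cap\tilde M$, which by the relative-mixing Lemma \ref{norm1} (applicable because $P\nprec_M A\rtimes\Sigma$ for all $\Sigma\in\mathcal S$) is contained in $M$; hence $\beta(q)=q$, while $\beta(q)q=0$ would force $q=0$, contradicting $q\geq vv^*$. In your outline the normalizer $P$ and the relative mixing of $\pi$ enter only in Step (iii), so the dichotomy has no chance to emerge, and your ``secondary subtlety'' about keeping constants $c_n$ bounded below is a symptom of the wrong formulation: the paper avoids all quantitative bookkeeping by working with $Q$-$\alpha_t(Q)$-finite elements rather than rigidity constants.

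Step (iii) is also heavier than needed and its closing move is doubtful. Once $\alpha_1(Q)\prec_{\tilde M}Q$ is established, the paper concludes by contraposition with a short Fourier computation: if $Q\nprec_M L\Gamma$, take unitaries $w_n\in\mathcal U(Q)$ with $\Vert E_{L\Gamma}(xw_ny)\Vert_2\to0$ for all $x,y\in M$ and verify directly that $\Vert E_M(x\alpha_1(w_n)y)\Vert_2\to0$ for all $x,y\in\tilde M$ (using $\alpha_1=\varepsilon\circ\beta$, so $\alpha_1(M)=(1\ootimes A)\rtimes\Gamma$ and the relevant overlap with $M$ is $L\Gamma$), contradicting $\alpha_1(Q)\prec_{\tilde M}M$. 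No relative mixing is needed at this final stage, and no attempt is made to extract conclusion (1) from the time-$1$ intertwiner; your plan to trap the bimodule generated by $v$ and its $\mathcal N_{pMp}(Q)$-translates inside $L^2((A\ootimes A)\rtimes\Sigma)$ reverses the logical structure and would anyway require the normalizer control that only Lemma \ref{norm1}, used during the doubling, provides. For the ``moreover'' part your sketch points in the right direction, but the precise route is Corollary \ref{norm3} (almost malnormality upgrades $Q\prec_M L\Sigma$ to $P\prec_M A\rtimes\Sigma$), combined with the first part of the theorem and the argument of Step 5 in Theorem 4.1 of \cite{popamal1}.
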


Before proving this theorem, we give two lemmas regarding the position of normalizers of subalgebras in $M$ in some specific situations.
The first lemma below is Lemma 3.8 in \cite{vaesonecoho}, whereas Lemma \ref{norm2} is a variation of Lemma 4.2 in \cite{vaesbimodule}.

\begin{lem}[Vaes, \cite{vaesonecoho}]
\label{norm1}
Assume that $\pi$ is mixing relative to a family $\mathcal S$ of subgroups of $\Gamma$. Let $N$ be a finite von Neumann algebra, and $\Gamma \curvearrowright N$ any action. Put $M_0 = N \rtimes \Gamma$, and $\tilde{M}_0 = (A \otimes N) \rtimes \Gamma$.\\
Let $p \in M_0$ be a projection and $Q \subset pM_0p$ a von Neumann subalgebra such that $Q \nprec_{M_0} N \rtimes \Sigma$, for all $\Sigma \in \mathcal{S}$. Then $\mathcal{N}_{p\tilde{M}_0p}(Q) \subset pM_0p$.
\end{lem}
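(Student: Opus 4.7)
The plan is to prove the stronger statement that the quasi-normalizer $\mathcal{QN}_{\tilde M_0}(Q)''$ is contained in $M_0$, by applying the relative intertwining lemma (Lemma \ref{relativeintertwining}) to $B = Q \subset \tilde M_0$ with the subbimodule $H = L^2(M_0) \subset L^2(\tilde M_0)$. Note that $H$ is a $Q$-$Q$-subbimodule (since $Q \subset M_0$) and $H^* = H$ because $L^2(M_0)$ is $\ast$-invariant, so if we verify the hypothesis of Lemma \ref{relativeintertwining}, the conclusion gives $\mathcal{QN}_{\tilde M_0}(Q)'' \subset L^2(M_0)$, hence $\mathcal N_{p\tilde M_0 p}(Q) \subset p M_0 p$, which is exactly the statement.

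To verify the hypothesis I need a net $w_i \in \mathcal U(Q)$ with $\|e_Q(x w_i \xi)\|_2 \to 0$ for all $x \in \tilde M_0$ and $\xi \in H^\perp = L^2(\tilde M_0) \ominus L^2(M_0)$. Since $L^2(Q) \subset L^2(M_0)$, it suffices to bound $\|e_{M_0}(x w_i \xi)\|_2$. Using the hypothesis $Q \nprec_{M_0} N \rtimes \Sigma$ for every $\Sigma \in \mathcal S$, Lemma \ref{intertwiningmixing} provides a net of unitaries $w_i \in \mathcal U(Q)$ with $\|P_F(w_i)\|_2 \to 0$ for every $F \subset \Gamma$ small relative to $\mathcal S$. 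Writing the Fourier decomposition $w_i = \sum_g (w_i)_g u_g$ with $(w_i)_g \in N$, this means $\sum_{g \in F} \|(w_i)_g\|_2^2 \to 0$.

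By a uniform bound ($\|e_{M_0}(x w_i \xi)\|_2 \leq \|x\| \|\xi\|_2$) and linearity, I may reduce to testing on a dense family: $x = (a_1 \otimes n_1) u_h$ and $\xi = (a_0 \otimes n_0) u_k$ with $a_0 \in L^2(A) \ominus \mathbb C 1$, $a_1 \in A$, $n_0, n_1 \in N$, and $h, k \in \Gamma$. The computation inside $\tilde M_0 = (A \overline\otimes N) \rtimes \Gamma$ yields
\begin{equation*}
e_{M_0}(x w_i \xi) = \sum_{g \in \Gamma} \tau_A\bigl(a_1 \sigma_{hg}(a_0)\bigr) \cdot \bigl(1 \otimes n_1 \alpha_h\bigl((w_i)_g \alpha_g(n_0)\bigr)\bigr) \, u_{hgk},
\end{equation*}
so by orthogonality of the $u_{hgk}$ (at fixed $h,k$) and $\|ab\|_2 \leq \|a\|_2 \|b\|$,
\begin{equation*}
\|e_{M_0}(x w_i \xi)\|_2^2 \leq \|n_0\|^2 \|n_1\|^2 \sum_{g} \bigl|\langle \sigma_{hg}(a_0), a_1^*\rangle\bigr|^2 \, \|(w_i)_g\|_2^2.
\end{equation*}

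Now I would invoke Proposition \ref{stableproperties}: since $\pi$ is mixing relative to $\mathcal S$, so is the Gaussian action $\sigma$. Hence for any $\varepsilon > 0$ there exists $F_\varepsilon \subset \Gamma$ small relative to $\mathcal S$ such that $|\langle \sigma_g(a_0), a_1^*\rangle| < \varepsilon$ for all $g \notin F_\varepsilon$. Splitting the sum above according to whether $hg \in F_\varepsilon$ or not, using $\|a_0\|_2 \|a_1\| $ as the trivial bound on the matrix coefficient and $\sum_g \|(w_i)_g\|_2^2 = 1$, one obtains
\begin{equation*}
\|e_{M_0}(x w_i \xi)\|_2^2 \leq \|n_0\|^2 \|n_1\|^2 \Bigl( \|a_0\|_2^2 \|a_1\|^2 \, \|P_{h^{-1}F_\varepsilon}(w_i)\|_2^2 + \varepsilon^2 \Bigr).
\end{equation*}
Since $h^{-1} F_\varepsilon$ is still small relative to $\mathcal S$, the first term goes to $0$, and as $\varepsilon$ is arbitrary the whole expression tends to $0$. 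This verifies the hypothesis of Lemma \ref{relativeintertwining} and completes the argument. The main obstacle is the explicit Fourier-theoretic estimate in the last display; everything else is standard deformation/rigidity bookkeeping. One technicality to watch is that Lemma \ref{relativeintertwining} is stated with sequences while Lemma \ref{intertwiningmixing} produces nets, but the proof of \ref{relativeintertwining} works verbatim for nets, or else one extracts a countable subnet by a diagonal argument on a countable dense subset of $x$'s and $\xi$'s.
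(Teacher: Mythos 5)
Your proof is correct and follows essentially the same route as the source: the paper states Lemma \ref{norm1} without proof, citing Lemma 3.8 of Vaes's paper, and both that proof and the paper's own proof of the companion Lemma \ref{norm2} use exactly your template --- produce the net of unitaries via Lemma \ref{intertwiningmixing}, then verify the hypothesis of the relative intertwining Lemma \ref{relativeintertwining} for $H = pL^2(M_0)p$ by the Fourier estimate, splitting the sum over a small set $F_\varepsilon$ supplied by relative mixing of $\sigma^0_\pi$ (Proposition \ref{stableproperties}). The only points to tidy are bookkeeping: carry the projection $p$ through (or reduce to $p=1$ as the paper does elsewhere), and perform the density reduction first in $\xi$ (against $\|x\|\,\|\xi\|_2$) and then in $x$ (against $\|x\|_2\,\|\xi\|_\infty$), as you indicate.
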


The previous lemma will be used in the special cases where $\Gamma \curvearrowright N$ is either the Gaussian action (and $M_0 = M$, $\tilde{M}_0 = \tilde{M}$), or the trivial action (and $M_0 = L\Gamma$, $\tilde{M}_0 = M$).

\begin{lem}
\label{norm2}
Assume that $\Sigma < \Gamma$ is a subgroup. Put $I = \Gamma / \Sigma$ and consider the action of $\Gamma$ on $I$ obtained by multiplication to the left. For a subset $I_1 \subset I$, write $\Stab(I_1) = \lbrace g \in \Gamma \, | \, g\cdot i = i, \forall i \in I_1 \rbrace$ and $\Norm(I_1) = \lbrace g \in \Gamma \, | \, g\cdot I_1 = I_1 \rbrace$. Let $\Gamma \curvearrowright N$ be any  action on a finite von Neumann algebra $N$, and set $M_0 = N \rtimes \Gamma$.\\
Let $p \in L(\Stab(I_1))$ be a projection and $B \subset pL(\Stab(I_1))p$ a subalgebra such that for all $i \in I \setminus I_1$, $B \nprec_{\Stab(I_1)} L(\Stab(I_1 \cup \lbrace i \rbrace))$. Then the quasi-normalizer of $B$ in $pM_0p$ is contained in $p\left(N \rtimes \Norm(I_1)\right)p$.
\end{lem}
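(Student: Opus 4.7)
The plan is to apply Lemma \ref{relativeintertwining} with an asymmetrically chosen $B$-$B$ sub-bimodule. Define
\[
H = \overline{\spa}\bigl\{\, n v_g : n \in N,\ g \in \Norm(I_1) \text{ or } g \cdot I_1 \subsetneq I_1 \,\bigr\} \subset L^2(M_0),
\]
where $(v_g)_{g \in \Gamma}$ denotes the canonical unitaries in $M_0 = N \rtimes \Gamma$. Since $\Stab(I_1) \subset \Norm(I_1)$ and every $h \in \Stab(I_1)$ fixes $g \cdot I_1 \subsetneq I_1$ pointwise, I would first check that $H$ is stable under left and right multiplication by $v_h$ for $h \in \Stab(I_1)$, hence by all of $B$. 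The sets $\{g : g \cdot I_1 \subsetneq I_1\}$ and $\{g : g \cdot I_1 \supsetneq I_1\}$ are disjoint and swapped by $g \mapsto g^{-1}$, so a short computation gives
\[
H \cap H^* = L^2(N \rtimes \Norm(I_1)).
\]

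The main step is to exhibit a sequence $(u_n) \in \mathcal{U}(B)$ with $\|e_B(x u_n \xi)\|_2 \to 0$ for every $x \in M_0$ and every $\xi \in H^\perp$. By hypothesis, $B \nprec_{L(\Stab(I_1))} L(\Stab(I_1 \cup \{i\}))$ for every $i \in I \setminus I_1$; since $I \setminus I_1$ is countable, the diagonal form of Theorem \ref{intertwining} yields a single sequence $(u_n) \in \mathcal{U}(B)$ such that
\[
\|E_{L(\Stab(I_1 \cup \{i\}))}(a u_n b)\|_2 \to 0 \quad \text{for all } a, b \in L(\Stab(I_1)) \text{ and } i \in I \setminus I_1.
\]

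To verify the required decay, I would use the chain $\|e_B(\cdot)\|_2 \leq \|E_{L(\Stab(I_1))}(\cdot)\|_2 \leq \|E_{N \rtimes \Stab(I_1)}(\cdot)\|_2$, the second bound coming from $|\tau_N(c)| \leq \|c\|_2$ applied to the $N$-coefficients. By linearity, continuity and density, it suffices to consider $x = c v_k$ with $c \in N$, $k \in \Gamma$ and $\xi = v_g n''$ with $n'' \in N$ and $g \cdot I_1 \not\subset I_1$ (which is precisely the defining condition for $H^\perp$). Expanding $u_n = \sum_{h \in \Stab(I_1)} \alpha_{n,h} v_h$ and computing the Fourier coefficients of $c v_k u_n v_g n''$ directly gives
\[
\|E_{N \rtimes \Stab(I_1)}(c v_k u_n v_g n'')\|_2^2 \leq \|c\|_\infty^2 \|n''\|_2^2 \sum_{\substack{h \in \Stab(I_1) \\ khg \in \Stab(I_1)}} |\alpha_{n,h}|^2.
\]
The key geometric observation is: pick $i_0 \in I_1$ with $j_0 := g \cdot i_0 \notin I_1$. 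Any $h$ in the index set above must then satisfy $h \cdot j_0 = k^{-1} \cdot i_0$, so the set is contained in a single coset $h_0 \Stab(I_1 \cup \{j_0\})$ (possibly empty) for some $h_0 \in \Stab(I_1)$. The sum is then bounded by $\|E_{L(\Stab(I_1 \cup \{j_0\}))}(v_{h_0^{-1}} u_n)\|_2^2$, which tends to zero by our choice of $(u_n)$.

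Lemma \ref{relativeintertwining} then yields $\mathcal{QN}_{M_0}(B)'' \subset H \cap H^* = L^2(N \rtimes \Norm(I_1))$, and intersecting with $pM_0p$ (noting that $p \in L(\Stab(I_1)) \subset N \rtimes \Norm(I_1)$) gives the claim. The subtle point — and where the main obstacle lies — is the asymmetric choice of $H$: taking $H = L^2(N \rtimes \Norm(I_1))$ directly would force one to estimate the Fourier sum above for $g$ with $g \cdot I_1 \subsetneq I_1$, where the index set is all of $\Stab(I_1)$ and the estimate fails. Placing such $g$'s inside $H$ rather than in $H^\perp$ sidesteps this problem while still producing $L^2(N \rtimes \Norm(I_1))$ as $H \cap H^*$ through the $g \mapsto g^{-1}$ symmetry built into Lemma \ref{relativeintertwining}.
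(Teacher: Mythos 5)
Your proposal is correct and follows essentially the same route as the paper: the same asymmetric bimodule $H = \overline{\spa}\{xu_g : x \in N,\ gI_1 \subset I_1\}$ (your two-case description is exactly this set), the same diagonal intertwining sequence, the same reduction to elementary $x$ and $\xi$, and the same coset-containment estimate showing that the surviving Fourier indices lie in a single translate of $\Stab(I_1 \cup \{j_0\})$. The only cosmetic difference is that you dominate by $E_{N \rtimes \Stab(I_1)}$ where the paper uses $E_{L(\Stab(I_1))}$ directly; your closing remark on why the asymmetric choice of $H$ is forced is accurate and implicit in the paper's argument.
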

The proof of this lemma is exactly the same as the proof of 4.2 in \cite{vaesbimodule}. We include it for the sake of completeness.
\begin{proof}
By the comment following Theorem \ref{intertwining} (diagonal version of that theorem), there exists a sequence $w_n \in \mathcal{U}(B)$ such that for all $i \in I \setminus I_1$, and all $g,h \in Stab(I_1)$, $$\lim_n \Vert E_{L(\Stab(I_1 \cup \lbrace i \rbrace)}(u_gw_nu_h) \Vert_2 = 0.$$
Define a $B$-$B$ bimodule $H \subset L^2(M_0)$ as the closed span of the $xu_g$, with $x \in N$ and $g \in \Gamma$ such that $gI_1 \subset I_1$. Observe that $H \cap H^* = L^2(N \rtimes \Norm(I_1))$. Hence by the relative intertwining lemma \ref{relativeintertwining}, it is enough to show that for all $x \in M_0$, and all $\xi \in H^\perp$, $$\lim_n \Vert e_B(xw_n\xi) \Vert_2 = 0.$$
We can assume $x = au_g$, $\xi = bu_h$, for $a,b \in N$, $g,h \in \Gamma$, and $hI_1 \nsubseteq I_1$. So write $w_n = \sum_{k \in \Stab(I_1)} \lambda_{n,k}u_k$, $\lambda_{n,k} \in \C$. We have 
$$\Vert E_{L(\Stab(I_1))}(au_gw_nbu_h)\Vert_2^2 = \sum_{k \in \Stab(I_1) \cap g^{-1}\Stab(I_1)h^{-1}} \vert \tau(a \sigma_{gk}(b) \vert^2 \vert \lambda_{n,k} \vert^2.$$
But note that if $\mathcal I = \Stab(I_1) \cap g^{-1}\Stab(I_1)h^{-1}$ is non-empty, then it is contained in $k_0 \Stab(I_1 \cup \lbrace i \rbrace)$, for any $k_0 \in \mathcal{I}$, $i \in hI_1 \setminus I_1$. Hence,
$$\Vert E_{L(\Stab(I_1))}(au_gw_nbu_h)\Vert_2 \leq \Vert a \Vert_2 \Vert b \Vert_2 \Vert E_{L(\Stab(I_1 \cup \lbrace i \rbrace)}(u_{k_0^{-1}}w_n)\Vert_2,$$
which goes to $0$ as $n$ goes to infinity because $k_0 \in \mathcal I \subset \Stab(I_1)$. Since $B \subset L(\Stab(I_1))$, we get the result.
\end{proof}

Lemma \ref{norm2} will be used by the means of the following proposition.

\begin{cor}
\label{norm3}
Let $\Gamma \curvearrowright N$ be any action, and put $M_0 = N \rtimes \Gamma$.\\
If $Q \subset pM_0p$ is a diffuse von Neumann algebra such that $Q \prec_{M_0} L\Sigma$ for an almost malnormal subgroup $\Sigma \subset \Gamma$ (Definition \ref{malnormal}), then $P \prec_{M_0} N \rtimes \Sigma$, where $P = \mathcal{QN}_{pM_0p}(Q)''$.
\end{cor}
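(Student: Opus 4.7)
The plan is to reduce to Lemma \ref{norm2} via an iterated application of Popa's intertwining theorem, with the iteration controlled by almost malnormality of $\Sigma$.

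\textbf{Iterated intertwining.} Consider the collection $\mathcal F$ of finite subsets $F \subset \Gamma/\Sigma$ containing the trivial coset $\bar e$ (so $\Stab(\{\bar e\}) = \Sigma$) and satisfying $Q \prec_{M_0} L(\Stab(F))$. By hypothesis $\{\bar e\} \in \mathcal F$. For each $F \in \mathcal F$, the algebra $L(\Stab(F))$ contains a diffuse subalgebra (the image of a diffuse corner of $Q$), so $\Stab(F)$ is infinite; the $n$-almost malnormality of $\Sigma$ then forces $|F| < n$. Choose $F^* \in \mathcal F$ of maximal cardinality and apply Theorem \ref{intertwining}: extract projections $p_0 \in Q$, $q_0 \in L(\Stab(F^*))$, a unital normal $*$-homomorphism $\psi \colon p_0 Q p_0 \to q_0 L(\Stab(F^*)) q_0$ whose image $B := \psi(p_0 Q p_0)$ is diffuse, and a nonzero partial isometry $v \in p_0 M_0 q_0$ with $xv = v\psi(x)$ for $x \in p_0 Q p_0$.

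\textbf{Applying Lemma \ref{norm2}.} For each $i \in (\Gamma/\Sigma)\setminus F^*$, maximality of $F^*$ together with composition of intertwinings gives $B \nprec_{L(\Stab(F^*))} L(\Stab(F^* \cup \{i\}))$: otherwise composing with $Q \prec_{M_0} L(\Stab(F^*))$ would yield $Q \prec_{M_0} L(\Stab(F^* \cup \{i\}))$, contradicting maximality. Lemma \ref{norm2} then produces
\[
\mathcal{QN}_{q_0 M_0 q_0}(B)'' \subset q_0\bigl(N \rtimes \Norm(F^*)\bigr) q_0.
\]

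\textbf{Transport via $v$ and conclusion.} A direct computation based on $xv = v\psi(x)$ shows that for $y \in \mathcal{QN}_{p_0 M_0 p_0}(p_0 Q p_0)$ and $x \in p_0 Q p_0$, writing $yx = \sum_i y_i x_i'$ with $x_i' \in p_0 Q p_0$ and $y_i \in p_0 M_0 p_0$ yields $(v^*yv)\psi(x) = \sum_i (v^*y_iv)\psi(x_i')$; a symmetric identity on the left gives $v^*yv \in \mathcal{QN}_{q_0 M_0 q_0}(B)$, hence $v^* y v \in q_0(N \rtimes \Norm(F^*))q_0$. Set $p_v := vv^*$; the commutation $p_v \in (p_0 Q p_0)'$ makes $p_v$ trivially a quasi-normalizer element, so $p_v \in P_0 := \mathcal{QN}_{p_0 M_0 p_0}(p_0 Q p_0)''$. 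Choosing coset representatives $\Norm(F^*) = \sqcup_{j=1}^m h_j \Stab(F^*)$ and using $\Stab(F^*) \subset \Sigma$ (so $h_j \Stab(F^*) \cdot \Sigma = h_j \Sigma$), the identity $yv = v(v^* y v)$ valid for $y \in p_v P_0 p_v$ yields
\[
p_v P_0 p_v \cdot v \cdot (N \rtimes \Sigma) \;\subset\; \sum_{j=1}^m v\, u_{h_j}\,(N \rtimes \Sigma),
\]
a right $(N \rtimes \Sigma)$-submodule of $L^2(M_0)$ of dimension at most $m$. Theorem \ref{intertwining} then gives $p_v P_0 p_v \prec_{M_0} N \rtimes \Sigma$; since $p_v \in P_0$, this upgrades to $P_0 \prec_{M_0} N \rtimes \Sigma$, and careful bookkeeping propagates the intertwining to $P = \mathcal{QN}_{pM_0p}(Q)''$.

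\textbf{Main obstacle.} The delicate step is the final propagation from $P_0$ to $P$: one must carefully compare the quasi-normalizer of the compression $p_0 Q p_0 \subset p_0 M_0 p_0$ with the full quasi-normalizer $\mathcal{QN}_{pM_0p}(Q)$ to ensure the corner-level intertwining lifts to the ambient $P$, which requires tracking how the cross terms arising from $p - p_0$ interact with the witnessing bimodule.
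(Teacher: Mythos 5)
Your argument is essentially the paper's own proof: take a maximal finite subset $F^* \subset \Gamma/\Sigma$ containing the trivial coset with $Q \prec_{M_0} L(\Stab(F^*))$ (bounded in size by $n$-almost malnormality, since a diffuse algebra cannot embed into the von Neumann algebra of a finite group), push $Q$ into $L(\Stab(F^*))$ by an intertwining $(\psi,v)$ whose image does not embed further, apply Lemma \ref{norm2}, transport the quasi-normalizer through $v$, and finish using that $\Stab(F^*)$ has finite index in $\Norm(F^*)$ and is contained in $\Sigma$. All of this matches the paper step for step.

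The one place you should not wave through is ``maximality of $F^*$ together with composition of intertwinings.'' Intertwining is not transitive for an arbitrary choice of data: if $\psi(p_0Qp_0) \prec_{L(\Stab(F^*))} L(\Stab(F^*\cup\{i\}))$ is witnessed by some pair $(\phi,w)$, the composed partial isometry $vw$ may vanish (the left support of the second intertwining bimodule need not meet $v^*v$), so you do not automatically get $Q \prec_{M_0} L(\Stab(F^*\cup\{i\}))$ and hence no contradiction with maximality. The paper handles exactly this by citing Remark 3.8 of \cite{vaesbimodule}, which guarantees that the data $(\psi,v)$ can be chosen so that $\psi(p_0Qp_0) \nprec_{L(\Stab(F^*))} L(\Stab(F^*\cup\{i\}))$ for all $i \notin F^*$, unless $Q$ itself embeds into one of these algebras. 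With that citation in place your proof closes. Conversely, the step you flag as the ``main obstacle'' is not one: since your $p_0$ is a projection of $Q$, Popa's compression identity $p_0\,\mathcal{QN}_{pM_0p}(Q)''\,p_0 = \mathcal{QN}_{p_0M_0p_0}(p_0Qp_0)''$ shows that $P_0$ is literally a corner of $P$, so an intertwining of $P_0$ into $N \rtimes \Sigma$ is already an intertwining of a corner of $P$ and no further bookkeeping is needed.
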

\begin{proof}
Use the notations of Lemma \ref{norm2} and take $n \geq 1$ such that $\Sigma$ is $n$-almost malnormal. Since $Q$ is diffuse, one has $Q \nprec_{M_0} L(\Stab(I_0))$ for $\vert I_0 \vert \geq n$ because $\Stab(I_0) = \cap_{g\Sigma \in I_0} g\Sigma g^{-1}$ is finite.
Hence one can consider a maximal finite subset $I_1 \subset I = \Gamma / \Sigma$ such that $\Sigma \in I_1$ and $Q \prec_{M_0} L(\Stab(I_1))$.\\
But as explained in \cite[Remark 3.8]{vaesbimodule}, there exist projections $q_0 \in Q$, $p_0 \in L(\Stab(I_1))$, a $*$-homomorphism $\psi : q_0Qq_0 \rightarrow p_0L(\Stab(I_1))p_0$, and a non-zero partial isometry $v \in q_0M_0p_0$ such that $xv = v\psi(x)$, for all $x \in q_0Qq_0$, and $$\psi(q_0Qq_0) \nprec_{L(\Stab(I_1))} L(\Stab(I_1 \cup \lbrace i \rbrace)),$$
for all $i \in I \setminus I_1$.\\
By Lemma \ref{norm2}, this implies that $v^* Pv \subset \mathcal{QN}_{p_0M_0p_0}(\psi(q_0Qq_0))'' \subset N \rtimes \Norm(I_1)$. Therefore $P \prec_{M_0} N \rtimes \Stab(I_1)$, because $\Stab(I_1) < \Norm(I_1)$ is a finite index subgroup.
But by assumption $\Sigma \in I_1$, so that $\Stab(I_1) \subset \Sigma$.
\end{proof}

\begin{proof}[Proof of Theorem \ref{adapted IPV}]
To simplify notations, we assume that $p = 1$ ; the proof is exactly the same in the general case.\\
Suppose that (1) is false, that is, no corner of $P$ embeds into $A \rtimes \Sigma$ inside $M$, for all $\Sigma \in \mathcal S$. We will prove that $Q \prec_M L\Gamma$.\\
First, a classical convex hull trick (as in the proof of Lemma 5.2 in \cite{popasup}) implies that there exist a non-zero partial isometry $v_0 \in \tilde{M}$ such that for all $x \in Q$, $xv_0 = v_0 \alpha_{t_0}(x)$. In particular, $v_0$ is $Q$-$\alpha_{t_0}(Q)$ finite.\\
Now we show that there exists a non-zero element $a \in \tilde{M}$ which is $Q$-$\alpha_1(Q)$ finite. To do so, observe that if $v \in \tilde{M}$ is $Q$-$\alpha_t(Q)$ finite for some $t > 0$, then for any $d \in \mathcal{N}_M(Q)$, the element $\alpha_t(\beta(v^*)dv)$ is $Q$-$\alpha_{2t}(Q)$ finite. The following claim is enough to prove the existence of $a$.\\
\textit{Claim.} For any nonzero element $v \in \tilde{M}$, there exists $d \in \mathcal{N}_M(Q)$ such that $\beta(v^*)dv \neq 0$.\\
Assume by contradiction that there exists a $v \neq 0$ with $\beta(v^*)dv = 0$, for all $d \in \mathcal{N}_M(Q)$. Denote by $q \in \tilde{M}$ the projection onto the closed linear span of $\lbrace range(dv)\, | \, d \in \mathcal{N}_M(Q) \rbrace$. We see that $\beta(q)q=0$ and $q \in P' \cap \tilde M$. By lemma \ref{norm1}, since $P \nprec_M A \rtimes \Sigma$ for all $\Sigma \in \mathcal S$, we have $P' \cap \tilde M \subset M$, so that $q \in M$ and $\beta(q) = q$. Hence $q = 0$, which contradicts the fact that $q \geq vv^* \neq 0$.\\

Considering the $Q$-$\alpha_1(Q)$ bimodule $\overline{\spa(Qa\alpha_1(Q))}$, we see that $\alpha_1(Q) \prec_{\tilde M} Q$. Let's check that this implies that $Q \prec_M L\Gamma$.\\
Reasoning again by contradiction, assume that $Q \nprec_M L\Gamma$. Popa's intertwining lemma \ref{intertwining} then implies that there exists a sequence $(w_n) \subset \mathcal{U}(Q)$ such that for all $x,y \in M$, $\lim_n \Vert E_{L\Gamma}(xw_ny) \Vert_2 = 0$.\\
We claim that $\lim_n \Vert E_M(x \alpha_1(w_n) y) \Vert_2 = 0$ for all $x,y \in \tilde{M}$.
By a linearity/density argument, it suffices to prove this equality for $x = (a \otimes b)u_g$ and $y = (c \otimes d)u_h$, with $a,b,c,d \in A$, $g,h \in \Gamma$. Now writing $w_n = \sum_{k \in \Gamma} x_{k,n}u_k$, an easy calculation gives
\begin{align*}
\Vert E_M(x \alpha_1(w_n) y) \Vert^2_2 \, & = \,\left\Vert E_M \left( \sum_k \left(a\sigma_{gk}(c) \otimes b\sigma_g(x_{k,n})\sigma_{gk}(d)\right) u_{gkh} \right) \right\Vert^2_2\\
& = \, \sum_k \Vert a \sigma_{gk}(c) \Vert^2_2 \, \vert \tau(b \sigma_g(x_{k,n}) \sigma_{gk}(d)) \vert^2\\
& \leq \, \sum_k \Vert a \Vert^2_{\infty} \Vert c \Vert^2_2  \, \vert \tau(b\sigma_g(x_{k,n}) \sigma_{gk}(d)) \vert^2\\
& = \, \Vert a \Vert^2_{\infty} \Vert c \Vert^2_2 \Vert E_{L\Gamma}\left( (bu_g) w_n d \right) \Vert^2_2,
\end{align*}
which tends to $0$ when $n$ goes to infinity. This contradicts $\alpha_1(Q) \prec_{\tilde{M}} M$.\\

For the moreover part, assume that \textit{(1')} and \textit{(2')} are not satisfied. By Proposition \ref{norm3}, since \textit{(2')} does not hold true, we get that $Q \nprec_M L\Sigma$ for all $\Sigma \in \mathcal{S}$. Furthermore, the first part of the theorem implies that $Q \prec_M L\Gamma$. Now, proceeding as in the proof of Theorem 4.1 (step 5) in \cite{popamal1}, one checks that Lemma \ref{norm1} implies the result.
\end{proof}

\subsection{Position of rigid subalgebras in $M^\omega$}

In view of studying property Gamma, our goal is now the following theorem, that should be compared to Theorem 3.2 of \cite{ioana5}.

\begin{thm}
\label{ultraproduct}
Assume that $\pi$ is mixing relative to a finite family $\mathcal S$ of almost malnormal subgroups of $\Gamma$. Let $\omega \in \beta \N \setminus \N$ and let $B \subset M^\omega$ be a von Neumann subalgebra such that the deformation converges uniformly to the identity on $(B)_1$. Then one of the following holds.
\begin{enumerate}
\item $B \prec_{M^\omega} \C1$ ;
\item $B \subset A^\omega \rtimes \Gamma$ ;
\item $B' \cap M \prec_M A \rtimes \Sigma$, for some $\Sigma \in \mathcal{S}$ ;
\item $B' \cap M \prec_M L\Gamma$.
\end{enumerate}
\end{thm}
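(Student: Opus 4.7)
The plan is to adapt the strategy of Theorem \ref{adapted IPV} to the ultraproduct $\tilde M^\omega$, substituting the spectral gap input of Lemma \ref{Spectral Gap} for the normalizer-based argument used in the proof of Theorem \ref{adapted IPV}. Assume that (1) fails, so $B \nprec_{M^\omega} \C 1$; we shall derive one of (2), (3), (4). Because $(\alpha_t)$ converges uniformly to the identity on $(B)_1$, one can choose a dyadic $t_0 = 1/2^n$ with $\sup_{u \in \mathcal U(B)} \Vert \alpha_{t_0}(u) - u \Vert_2 < 1/2$, so that $|\tau(\alpha_{t_0}(u^*) u)| > 1/2$ uniformly on $\mathcal U(B)$. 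Since $\tilde M^\omega$ is a finite von Neumann algebra, the usual convex-hull averaging trick (as in the proof of Theorem \ref{adapted IPV}) applies verbatim and yields a non-zero partial isometry $v_0 \in \tilde M^\omega$ with $u v_0 = v_0 \alpha_{t_0}(u)$ for every $u \in \mathcal U(B)$; in particular $v_0$ is $B$-$\alpha_{t_0}(B)$-finite inside $\tilde M^\omega$.

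The next step is to imitate, inside $\tilde M^\omega$, the doubling of the deformation parameter from the proof of Theorem \ref{adapted IPV}. Given a non-zero $v \in \tilde M^\omega$ that is $B$-$\alpha_t(B)$-finite, I would look for a unitary $d$ (in a suitable normalizer-type subset) such that $\beta(v^*) d v \neq 0$, so that $\alpha_t(\beta(v^*) d v)$ gives a non-zero $B$-$\alpha_{2t}(B)$-finite element. Suppose no such $d$ existed; then the projection $q \in \tilde M^\omega$ onto the closed linear span of the ranges of all $d v$'s would satisfy $\beta(q) q = 0$ and $q \in (B' \cap M)' \cap \tilde M^\omega$. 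Assuming both (3) and (4) fail, one argues via Corollary \ref{norm3} and the almost-malnormality of the groups in $\mathcal S$ that any amenable direct summand of $B' \cap M$ already lands in case (3) or (4); hence $B' \cap M$ may be assumed to have no amenable direct summand. Lemma \ref{Spectral Gap} then gives $(B' \cap M)' \cap \tilde M^\omega \subset M^\omega$, forcing $q \in M^\omega$ and therefore $\beta(q) = q$, which contradicts $\beta(q) q = 0$. Iterating the doubling $n$ times produces a non-zero $w \in \tilde M^\omega$ with $u w = w \alpha_1(u)$ for every $u \in \mathcal U(B)$.

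Since $\alpha_1$ restricts to the flip $\varepsilon$ on $M$, the existence of such a $w$ intertwines $B \subset M^\omega$ with a "flipped" copy of $B$ sitting inside $(1 \ootimes A) \rtimes \Gamma \subset \tilde M^\omega$. Combining this with the mixing intertwining criterion (Lemma \ref{intertwiningmixing}) together with $B \nprec_{M^\omega} \C 1$, one concludes that $B$ is in fact contained in $A^\omega \rtimes \Gamma$, giving (2). The main obstacle I expect is precisely the doubling step: the normalizer $\mathcal N_M(Q)$ used in Theorem \ref{adapted IPV} has no clean analogue inside an ultrapower, so Lemma \ref{Spectral Gap} has to play its role, which in turn forces the preliminary splitting of $B' \cap M$ into amenable and non-amenable summands. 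The almost-malnormality assumption on $\mathcal S$ is precisely what makes this splitting clean, since via Corollary \ref{norm3} it absorbs the amenable part of $B' \cap M$ into the conclusions $B' \cap M \prec_M A \rtimes \Sigma$ or $B' \cap M \prec_M L\Gamma$.
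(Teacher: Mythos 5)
Your proposal takes a genuinely different route from the paper, but it has several gaps that I do not see how to repair. The most serious is your reliance on Lemma \ref{Spectral Gap} for the doubling step: that lemma requires some Connes tensor power of $L^2(\tilde M)\ominus L^2(M)$ to be weakly contained in the coarse bimodule, which by Lemma \ref{weaklycontained} comes from temperedness of a tensor power of $\pi$ --- a hypothesis that Theorem \ref{ultraproduct} does \emph{not} assume (it only assumes relative mixing with respect to a finite family of almost malnormal subgroups). So the spectral gap input you need is simply unavailable; the paper's proof of this theorem uses no spectral gap at all. Second, your reduction ``any amenable direct summand of $B'\cap M$ already lands in case (3) or (4)'' is false: amenability of a corner of $B'\cap M$ has no bearing on its embeddability into $A\rtimes\Sigma$ or $L\Gamma$ (a diffuse subalgebra of $A$ is amenable and generically satisfies neither), and Corollary \ref{norm3} asserts nothing of the kind. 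Third, your endgame is not an argument: a nonzero $B$-$\alpha_1(B)$-finite element in $\tilde M^\omega$ would at best yield an intertwining statement of $L\Gamma$-type, as in the proof of Theorem \ref{adapted IPV}; it cannot by itself produce the \emph{containment} $B\subset A^\omega\rtimes\Gamma$ of conclusion (2), and you supply no mechanism for that.

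The paper proceeds quite differently and sidesteps all three problems. It introduces the intermediate condition (2'): every $x=(x_n)\in B$ is, up to $\varepsilon$ in $\Vert\cdot\Vert_2$, supported on a subset of $\Gamma$ small relative to $\mathcal S$. Step 1 shows that if (1) and (2') both fail, then from an element $x\in B$ far from the ``small'' subspace, together with the uniform convergence of $\alpha_t$ on $(B)_1$, Popa's transversality and Lemma \ref{technicallemma}, one manufactures $y\in M^\omega$ and $z_0=E_{\tilde M}(yy^*)$ witnessing the rigidity condition $\vert\tau(\alpha_{t_0}(u)z_0u^*)\vert\geq c$ for all $u\in\mathcal U(B'\cap M)$ --- crucially for $B'\cap M$, a subalgebra of $M$ rather than of $M^\omega$ --- so that Theorem \ref{adapted IPV} applies directly and yields (3) or (4). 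Step 2 shows that (2') together with the failure of (3) forces (2), via a stabilizer-counting argument that uses the finiteness of $\mathcal S$, almost malnormality, and a sequence of unitaries $v_k\in B'\cap M$ with $\Vert P_G(v_k)\Vert_2\to0$ supplied by Lemma \ref{intertwiningmixing}. The transfer of rigidity from $B$ to $B'\cap M$ and the combinatorial Step 2 are precisely the two ideas missing from your proposal.
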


Before proving the theorem we recall some terminology and give a technical lemma, which is the first part of Lemma 3.8 in \cite{vaesonecoho}.\\
A subset $F$ of $\Gamma$ is said to be small relative to $\mathcal S$ if it is of the form $\cup_{i=1}^n g_i\Sigma_i h_i$, for some $g_1,\cdots,g_n,h_1,\cdots,h_n \in \Gamma$, and $\Sigma_1,\cdots,\Sigma_n \in \mathcal S$. We denote by $\mathcal S_s$ the set of all such small sets. For any $F \subset \Gamma$, denote by $P_F \in B(L^2(\tilde M))$ the projection onto $\overline{\spa} \lbrace au_g \, | \, a \in \tilde A, g \in F \rbrace$.\\
As observed in the proof of Lemma 5.5 in \cite{PV10}, though $P_F$ might not restrict to a bounded map on $\tilde M$ (for the norm $\Vert \cdot \Vert$) for any $F$, it restricts well to a completely bounded map whenever $F$ is a finite union of $g_i\Sigma_i h_i$'s with $g_i, h_i \in \Gamma$, and each $\Sigma_i < \Gamma$ being a subgroup. Indeed, if $F = g\Sigma h$, then $P_F(x) = u_gE_{\tilde A \rtimes \Sigma}(u_g^*xu_h^*)u_h$ is completely bounded on $\tilde M$. Now if $F = \cup_{i=1}^n F_i$, with $F_i = g_i\Sigma_i h_i$, then the projections $P_{F_i}$ commute and $P_F = 1 - (1-P_{F_1})\cdots (1 - P_{F_N})$ is completely bounded as well.

\begin{lem}[Vaes, \cite{vaesonecoho}]
\label{technicallemma}
Assume that $\pi$ is mixing relative to a family $\mathcal S$ of subgroups of $\Gamma$.\\
For a finite dimensional subspace $K \subset A \ominus \C1$, denote by $Q_K$ the orthogonal projection of $L^2(\tilde M)$ onto the closed linear span of $(a \otimes b)u_g$, $g \in \Gamma$, $a \in A$, $b \in K$.\\
For every finite dimensional $K \subset A \ominus \C1$, every $x \in (\tilde M)_1$ and every $\varepsilon > 0$, there exists $F \in \mathcal S_s$ such that $$\Vert Q_K(vx) \Vert_2 \leq \Vert P_F(v) \Vert_2 + \varepsilon, \; \text{for all } v \in (M)_1.$$
\end{lem}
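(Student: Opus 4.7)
The plan is to decompose $v = P_F(v) + (v - P_F(v))$ for a suitable small set $F \in \mathcal S_s$ depending on $K, x, \varepsilon$, and to bound the two pieces separately. The first piece is immediate: since $Q_K$ is an orthogonal projection and $\|x\|_\infty \leq 1$,
\[
\|Q_K(P_F(v)\cdot x)\|_2 \;\leq\; \|P_F(v)\cdot x\|_2 \;\leq\; \|P_F(v)\|_2\,\|x\|_\infty \;\leq\; \|P_F(v)\|_2,
\]
so it will suffice to produce $F$ so that $\|Q_K((v - P_F(v))\cdot x)\|_2 \leq \varepsilon$ uniformly in $v \in (M)_1$.

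Before choosing $F$, I would first put $x$ into a convenient form. Writing $x = \sum_g x_g u_g$ with $x_g \in L^2(A \ootimes A)$, truncate the sum to a finite $F_0 \subset \Gamma$, and then view $L^2(A \ootimes A)$ as the Hilbert--Schmidt class on $L^2(A)$ via $c \otimes d \leftrightarrow |c\rangle\langle d|$. Replace each $x_g$ by a finite-rank approximation $\sum_{\ell=1}^{r_g} \alpha_\ell^{(g)}\, f_\ell^{(g)} \otimes e_\ell^{(g)}$, taking care to choose the vectors $e_\ell^{(g)}, f_\ell^{(g)}$ in $A \ominus \C 1$ (i.e.\ \emph{bounded} mean-zero elements). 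This is possible because $A \ominus \C 1$ is $\|\cdot\|_2$-dense in $L^2(A) \ominus \C 1$, and because the constant components of $x_g$ in either tensor factor contribute nothing: $P_K$ kills constants in the second factor, and vectors of the form $1 \otimes \xi$ pair trivially with the range of $Q_K$. Absorbing the $L^2$-error into $\varepsilon/2$, I may from now on assume $x$ is of this finite-support, finite-rank form.

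With this data in hand, fix an orthonormal basis $b_1, \ldots, b_n$ of $K$ and apply the mixing-relative hypothesis. By Proposition~\ref{stableproperties}, $\sigma_\pi^0$ is itself mixing relative to $\mathcal S$, so for each of the finitely many pairs $(e_\ell^{(g)}, b_j)$ of vectors in $L^2(A) \ominus \C 1$ there is a small set $F_{g,\ell,j} \in \mathcal S_s$ outside of which $|\langle \sigma_h(e_\ell^{(g)}), b_j\rangle| < \delta$, with $\delta > 0$ to be chosen later. Set $F := \bigcup_{g,\ell,j} F_{g,\ell,j} \in \mathcal S_s$ (a finite union of small sets remains small).

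For the main estimate, compute the $u_k$-Fourier component of $Q_K((v - P_F(v))\cdot x)$: using that $Q_K$ commutes with left multiplication by $A$, that $u_h \cdot x_g = (\sigma_h \otimes \sigma_h)(x_g)\, u_{hg}$, and applying $\mathrm{id}\otimes P_K$ to the finite-rank decomposition of $x_g$,
\[
\xi_k \;=\; \sum_{\substack{g \in F_0,\,\ell,\,j\\ kg^{-1}\notin F}} \alpha_\ell^{(g)}\,\bigl\langle \sigma_{kg^{-1}}(e_\ell^{(g)}),\,b_j\bigr\rangle\; v_{kg^{-1}}\,\sigma_{kg^{-1}}(f_\ell^{(g)}) \otimes b_j .
\]
Two ingredients now combine: the matrix coefficient is $<\delta$ by the choice of $F$, and -- this is the critical step -- $\|v_{kg^{-1}}\sigma_{kg^{-1}}(f_\ell^{(g)})\|_2 \leq \|v_{kg^{-1}}\|_2\,\|f_\ell^{(g)}\|_\infty$ by H\"older, using that $f_\ell^{(g)}$ is bounded. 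A Cauchy--Schwarz on the finite index set $(g,\ell,j)$ followed by the substitution $h = kg^{-1}$ yields
\[
\|Q_K((v - P_F(v))\cdot x)\|_2^2 \;=\; \sum_k \|\xi_k\|_2^2 \;\leq\; C\delta^2 \sum_{h\notin F}\|v_h\|_2^2 \;\leq\; C\delta^2\,\|v\|_2^2 \;\leq\; C\delta^2,
\]
where $C$ depends only on $n$, $|F_0|$, and the numbers $|\alpha_\ell^{(g)}|\,\|f_\ell^{(g)}\|_\infty$ attached to the approximation of $x$ and $K$, but not on $v$. Choosing $\delta$ so that $\sqrt C\,\delta \leq \varepsilon/2$ closes the argument. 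The main obstacle is precisely the bookkeeping in this last display: the naive bound $\|v_h\sigma_h(f)\|_2 \leq \|v_h\|_\infty\,\|f\|_2$ would make the sum over $h \notin F$ diverge, since the uniform bound $\delta^2$ per term is not summable over $\Gamma$; one must transfer one boundedness factor from $v_h$ onto $f_\ell^{(g)}$, so that the summable control $\sum_h \|v_h\|_2^2 = \|v\|_2^2 \leq 1$ kicks in. This is exactly why the earlier finite-rank approximation of each $x_g$ had to be realized with vectors in $A$ and not merely in $L^2(A)$.
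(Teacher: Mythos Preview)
The paper does not include its own proof of this lemma; it is quoted from Vaes \cite{vaesonecoho} (the first part of Lemma~3.8 there) and stated without argument. Your proof follows the natural strategy, and essentially the one in the original reference: approximate $x$ by a finite-support, finite-rank element, invoke the relative mixing of $\sigma_\pi^0$ (via Proposition~\ref{stableproperties}) on the finitely many coefficients $\langle\sigma_h(e_\ell^{(g)}),b_j\rangle$ to manufacture the small set $F$, and then exploit $\sum_h\|v_h\|_2^2\le 1$ for summability over $\Gamma$. The argument is correct, and your emphasis on why boundedness of the $f_\ell^{(g)}$ (rather than merely $L^2$-control) is essential for the H\"older step is exactly the point.

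One small inaccuracy, harmless for the outcome: your justification for taking the \emph{first}-factor vectors $f_\ell^{(g)}$ mean-zero --- that ``vectors of the form $1\otimes\xi$ pair trivially with the range of $Q_K$'' --- is not right. The range of $Q_K$ is the closed span of $(a\otimes b)u_g$ with $a\in A$ arbitrary and $b\in K$, so it certainly contains $1\otimes b_j$; a term $f_\ell^{(g)}=1$ would produce $v_{kg^{-1}}\otimes\sigma_{kg^{-1}}(e_\ell^{(g)})$, which does not vanish under $Q_K$. Fortunately nothing in your estimate uses $f_\ell^{(g)}\in A\ominus\C1$; only boundedness is needed, precisely in the H\"older step $\|v_h\sigma_h(f)\|_2\le\|v_h\|_2\|f\|_\infty$. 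It is only the second-factor vectors $e_\ell^{(g)}$ that must be mean-zero (so that the relative-mixing hypothesis applies to the pair $(e_\ell^{(g)},b_j)$), and there your reasoning --- $P_K$ annihilates constants in the second factor --- is valid.
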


\begin{proof}[Proof of Theorem \ref{ultraproduct}]
The proof goes in two steps. In the first step, we show that the result is true if we replace condition (2) by \[(2') \; \forall \varepsilon > 0, \, \forall x = (x_n) \in B, \, \exists F \in \mathcal S_s \, : \lim_{n \rightarrow \omega} \Vert P_F(x_n) - x_n \Vert_2 < \varepsilon.\]
The second step consists in showing that (2') implies (2) or (3).\\

{\sc Step 1.} Assume that (1) and (2') are not satisfied. We will show that there exist $t_0 =1/2^{n_0}$, $c > 0$ and $z_0 \in \tilde{M}$ such that \[\tag{3.a} \vert \tau (\alpha_t(u)z_0u^*) \vert \geq c, \text{ for all } u \in \mathcal U(B' \cap M).\]
Theorem \ref{adapted IPV} will then conclude.\\
Denote by $E \subset L^2(\tilde M^\omega)$ the $\Vert \cdot \Vert_2$-closed span of $\lbrace (P_F(x_n)) \, \vert \, (x_n) \in \tilde M^\omega, \, F \in \mathcal S_s \rbrace$, and by $P \in B(L^2(\tilde M^\omega))$ the orthogonal projection onto $E$. One checks that $P$ commutes with $\alpha_t$ for all $t \in \R$, and also with left and right actions of $M$. Moreover, $P(L^2(M^\omega)) \subset L^2(M^\omega)$.\\
Condition (2') being not satisfied, there exists $x \in B$, with $\Vert x \Vert_2 = 1$ such that $x \notin E$. Then $x - P(x) \in L^2(M^\omega)$ is non zero, and has a norm $\Vert \cdot \Vert_2$ smaller or equal to $1$. Fix $\varepsilon$ very small, say $\varepsilon = \Vert x - P(x) \Vert_2^2 / 1000 \leq 1/1000$, and take $y = (y_n) \in M^\omega$ such that $\Vert x - P(x) - y \Vert_2 \leq \varepsilon$. Also choose $t = 1/2^n$ such that $\Vert \alpha_t(x) - x \Vert_2 \leq \varepsilon$.\\
Then $y$ is easily seen to satisfy the following three conditions :
\begin{itemize}
\item $\Vert \alpha_t(y) - y \Vert_2 \leq 3\varepsilon$ ;
\item $\Vert [y,a] \Vert_2 \leq 2\varepsilon$, for all $a \in (B' \cap M)_1$ ;
\item $\lim_{n \rightarrow \omega} \Vert P_F(y_n) \Vert_2 \leq \varepsilon$, for all $F \in \mathcal S_s$.
\end{itemize}
We show that $t_0 = 2t$ and $z_0 = E_{\tilde M}(y{y}^*)$ satisfy (3.a).\\
Take $u \in \mathcal U(B' \cap M)$. For all $a \in M$ define $\delta_t(a) = \alpha_t(a) - E_M \circ \alpha_t(a) \in \tilde M \ominus M$.
Now, consider a finite dimensional subspace $K \subset A \ominus \C1$ such that $\Vert Q_K(\delta_t(u))-\delta_t(u) \Vert_2 < \varepsilon/ \lim_n \Vert y_n \Vert^2$, where $Q_K$ is defined as in Lemma \ref{technicallemma}. Note that $Q_K$ is right $M$-modular, and that $Q_K \circ E_M = 0$. We have
\begin{align*}
\Vert \delta_t(u)y \Vert_2^2 & = \lim_{n \rightarrow \omega} \langle \delta_t(u)y_ny_n^*,\delta_t(u) \rangle\\
& \approx_\varepsilon \lim_{n \rightarrow \omega} \langle \delta_t(u)y_ny_n^*,Q_K(\delta_t(u)) \rangle \\
& = \lim_{n \rightarrow \omega} \Vert Q_K(\delta_t(u)y_n) \Vert_2^2\\
& = \lim_{n \rightarrow \omega} \Vert Q_K(\alpha_t(u)y_n) \Vert_2^2\\
& \approx_{8\varepsilon} \lim_{n \rightarrow \omega} \Vert Q_K(y_n\alpha_t(u)) \Vert_2^2.
\end{align*}
But by Lemma \ref{technicallemma}, there exists $F \in \mathcal S_s$ such that for all $n$, \[\Vert Q_K(y_n\alpha_t(u)) \Vert_2^2 \leq \Vert P_F(y_n) \Vert_2^2 + \varepsilon.\]
Combining all these approximations and inequalities, we get on the first hand :
\[\tag{3.b} \Vert \delta_t(u)y \Vert_2^2 \leq \lim_{n \rightarrow \omega} \Vert P_F(y_n) \Vert_2^2 + 10\varepsilon \leq 11 \varepsilon.\]
On the other hand, Popa's transversality lemma implies $\Vert \alpha_{2t}(uy) - uy \Vert_2  \leq 2\Vert \delta_t(uy) \Vert_2$.
Since $\alpha_{2t}(u)y - uy \approx_{6\varepsilon} \alpha_{2t}(uy) - uy$, and $\delta_t(uy) \approx_{3\varepsilon} \delta_t(u)y$ (for the norm $\Vert \cdot \Vert_2$), we get
\[\Vert \alpha_{2t}(u)y - uy \Vert_2 \leq 2\Vert \delta_t(u)y \Vert_2 + 12\varepsilon.\]
Hence \[\Vert \alpha_{2t}(u)y - uy \Vert_2^2 \leq 4\Vert \delta_t(u)y \Vert_2^2 + 48\varepsilon\Vert \delta_t(u)y \Vert_2 + 144\varepsilon^2.\] 
But remember that $\Vert x - P(x) \Vert_2 \leq 1$ and $\varepsilon \leq 1/1000$, so $\Vert y \Vert_2 \leq 2$ and $144\varepsilon^2 \leq \varepsilon$.
We obtain using (3.b)
\[\Vert \alpha_{2t}(u)y - uy \Vert_2^2 \leq 4\Vert \delta_t(u)y \Vert_2^2 + 200\varepsilon \leq 300\varepsilon.\]
Developing $\Vert \alpha_{2t}(u)y - uy \Vert_2^2$, this implies 
\begin{align*}
\tau(\alpha_{t_0}(u)E_{\tilde M}(yy^*)u^*) = \tau(\alpha_{2t}(u)yy^*u^*)
& \geq \Vert y \Vert_2^2 - 150\varepsilon\\
& \geq (\Vert x - P(x)\Vert_2 - \varepsilon)^2 - 150\varepsilon\\
& \geq \Vert x - P(x)\Vert_2^2 - 152\varepsilon > 0,
\end{align*}
as desired.\\

{\sc Step 2.} Assume that condition (2') holds true, but conditions (2) and (3) are not satisfied. We will derive a contradiction. What follows should be compared to the proofs of Lemma \ref{norm2} and Proposition \ref{norm3}.\\
Consider an element $x \in B \setminus (A^\omega \rtimes \Gamma)$ and put $y = (y_n) = x - E_{A^\omega \rtimes \Gamma}(x)$. 
Remark that any element in $B' \cap M$ commutes with $y$, because $M \subset A^\omega \rtimes \Gamma$.\\
By condition (2'), there exist $\Sigma \in \mathcal S$ and $g,h \in \Gamma$ such that $$\lim_{n \rightarrow \omega} \Vert P_\Sigma(u_gy_nu_h) \Vert_2 \neq 0.$$

Now put $I = \bigsqcup_{\Sigma \in \mathcal S} \Gamma / \Sigma$. Since $\mathcal S$ is finite and its elements are malnormal subgroups in $\Gamma$, there exists a constant $\kappa > 0$ such that $\Stab(I_0)$ is finite for all $I_0 \subset I$ with $\vert I_0 \vert \geq \kappa$.\\
Since $y \perp A^\omega \rtimes \Gamma$ we get that $\lim_\omega \Vert P_{\Stab(I_0)}(u_{g'}y_nu_{h'}) \Vert_2 = 0$ for all $g',h' \in \Gamma$ whenever $\vert I_0 \vert \geq \kappa$.
Hence there exist $I_1 \subset I$ finite, $g_0,h_0 \in \Gamma$ such that 
\begin{itemize}
\item[(3.c)] $\lim_\omega \Vert P_{\Stab(I_1)}(u_{g_0}y_nu_{h_0}) \Vert_2 = c > 0$ ;
\item[(3.d)] $\lim_\omega \Vert P_{\Stab(I_1 \cup \lbrace i \rbrace)}(u_{g'}y_nu_{h'}) \Vert_2 = 0$, for all $i \notin I_1$, and $g',h' \in \Gamma$.
\end{itemize}
Put $\varepsilon = c/5$ and take $F \in \mathcal{S}_s$ such that $\Vert y - (P_F(y_n)) \Vert_2 < \varepsilon$. Recall that $P_F$ is completely bounded on $M$, so that $(P_F(y_n))$ is a bounded sequence in $M$.
Write $F = g_1\Sigma_1h_1 \cup \cdots \cup g_p\Sigma_p h_p$ as in the definition of small sets relative to $\mathcal S$, and set \[F_0 = \lbrace h \in \Gamma \, \vert \, \exists i \leq p \, : \, h^{-1}(h_i^{-1}\Sigma_i) \in I_1 \rbrace = \bigcup_{i=1}^p \bigcup_{g\Sigma_i \in I_1} h_i^{-1}\Sigma_ig^{-1}.\]
Thus $F_0$ is small relative to $\mathcal S$.\\
{\it Claim.} $\lim_\omega \Vert P_{\Stab(I_1)}(xP_F(y_n)\xi)\Vert_2 = 0$, for all $x \in M$, $\xi \in P_{\Gamma \setminus F_0}(L^2(M))$.\\
Since $P_F(y_n)$ is bounded in norm $\Vert \cdot \Vert$ (and by Kaplanski's theorem), it is sufficient to prove this claim for $x = u_g$, $g \in \Gamma$ and $\xi = u_h$, with $h \in \Gamma$ such that $h^{-1}(h_i^{-1}\Sigma_i) \notin I_1$ for all $1 \leq i \leq p$. Write $y_n = \sum_{k \in \Gamma} y_{n,k}u_k$ (and so $P_F(y_n) = \sum_{k \in F} y_{n,k}u_k$). We get
\begin{align*}
\Vert P_{\Stab(I_1)}(u_gP_F(y_n)u_h)\Vert_2^2 & = \Vert \sum_{k \in F \cap g^{-1}\Stab(I_1)h^{-1}} \sigma_g(y_{n,k})u_{ghk} \Vert_2^2 \\
& = \sum_{k \in gFh \cap \Stab(I_1)} \Vert y_{n,g^{-1}kh^{-1}} \Vert_2^2 \\
& \leq \sum_{i=1}^p \left( \sum_{k \in gg_i\Sigma_i h_i h \cap \Stab(I_1)} \Vert y_{n,g^{-1}kh^{-1}} \Vert_2^2 \right)
\end{align*}
Now, for $i \in \lbrace 1 \cdots p \rbrace$, if the set $F_i = gg_i\Sigma_i h_i h \cap \Stab(I_1)$ is non empty, then it is contained in $k_i\Stab(I_1 \cup \lbrace i_i \rbrace)$ for $k_i \in F_i$, $i_i = h^{-1}h_i^{-1}(\Sigma_i) = h^{-1}(h_i^{-1}\Sigma_i) \notin I_1$. Therefore,
\begin{align*}
\sum_{k \in F_i} \Vert y_{n,g^{-1}k h^{-1}} \Vert_2^2 \; & \leq \sum_{k \in \Stab(I_1 \cup \lbrace i_i \rbrace)} \Vert y_{n,g^{-1}k_i k h^{-1}} \Vert_2^2 \\
& = \Vert P_{\Stab(I_1 \cup \lbrace i_i \rbrace)}(u_{k_i^{-1}g}y_n u_h) \Vert_2^2,
\end{align*}
which goes to $0$, when $n \rightarrow \omega$, because of (3.d). So the claim is proven.\\
Since (3) is not satisfied, Lemma \ref{intertwiningmixing} implies that there exists a sequence of unitaries $v_k \in B' \cap M$ such that $\lim_k \Vert P_G(v_k) \Vert_2 = 0$ for all $G \in \mathcal{S}_s$.\\
Fix $k$ such that $\Vert P_{F_0}(v_ku_{h_0}) \Vert_2 < \frac{\varepsilon}{\sup_n \Vert P_F(y_n) \Vert}.$
We get for all $n$,
\begin{align*}
\Vert P_{\Stab(I_1)}(u_{g_0}v_k^*y_nv_ku_{h_0}) \Vert_2 & \leq  \Vert P_{\Stab(I_1)}(u_{g_0}v_k^*P_F(y_n)v_ku_{h_0}) \Vert_2 + \Vert y_n - P_F(y_n) \Vert_2\\
& \leq \Vert P_{\Stab(I_1)}(u_{g_0}v_k^*P_F(y_n)P_{\Gamma \setminus F_0}(v_ku_{h_0})) \Vert_2 + \varepsilon + \Vert y_n - P_F(y_n) \Vert_2.
\end{align*}
By the claim, we can choose $n$ large enough so that $\Vert P_{\Stab(I_1)}(u_{g_0}v_k^*y_nv_ku_{h_0}) \Vert_2 \leq 3\varepsilon$, and also $\Vert [ v_k , y_n ] \Vert_2 \leq \varepsilon$. Thus we obtain :
$$\lim_\omega \Vert P_{\Stab(I_1)}(u_{g_0}y_nu_{h_0}) \Vert_2 \leq 4\varepsilon < c,$$
which contradicts (3.c). The proof is complete.
\end{proof}

\begin{rem}
In fact, the above proof can be modified to handle the case where $\mathcal S$ is not necessarily finite, but satisfies the following condition : there exists some $n \geq 0$, such that for all $\Sigma_1,\cdots,\Sigma_n \in \mathcal S$, $g_1,\cdots,g_n \in \Gamma$ with $g_i\Sigma_i \neq g_j\Sigma_j$ (as subsets of $\Gamma$), the set $\cap_{i=1}^n g_i\Sigma_ig_i^{-1}$ is finite. Theorem B will remain true under this softer assumption (but still requiring that the elements of $\mathcal S$ are amenable).
\end{rem}

\section{Proof of the main results}

We start with a proposition combining the spectral gap argument with Theorem \ref{adapted IPV}.

\begin{prop}
\label{positioncommutant}
Assume that $\pi$ is mixing relative to a family $\mathcal S$ of amenable subgroups of $\Gamma$ and has some tensor power which is tempered. Then for any subalgebra $Q \subset pMp$ with no amenable direct summand, one has $$P := Q' \cap pMp \prec_M L\Gamma.$$
If in addition the elements of $\mathcal S$ are almost malnormal in $\Gamma$, then $$P \prec_M \C1 \text{ or } \mathcal N_{pMp}(P)'' \prec_M L\Gamma.$$
\end{prop}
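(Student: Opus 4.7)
The plan is to combine the spectral-gap lemmas of Section 3.1 with Theorem \ref{adapted IPV}, applied not to $Q$ itself but to the rigid algebra $P = Q' \cap pMp$. Since some tensor power of $\pi$ is tempered, Lemma \ref{weaklycontained} provides a tensor power of the $M$-$M$ bimodule $L^2(\tilde M) \ominus L^2(M)$ that is weakly contained in the coarse bimodule. As $Q$ has no amenable direct summand, Lemma \ref{Spectral Gap} then yields $Q' \cap \tilde M^\omega \subset M^\omega$.

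I claim this forces $\sup_{v \in (P)_1} \|\alpha_t(v) - E_M(\alpha_t(v))\|_2 \to 0$ as $t \to 0$. Indeed, suppose not; then there exist $\varepsilon > 0$, $t_n \to 0$ and $v_n \in (P)_1$ with $\|y_n\|_2 \geq \varepsilon$, where $y_n := \alpha_{t_n}(v_n) - E_M(\alpha_{t_n}(v_n)) \in \tilde M \ominus M$. These vectors are uniformly bounded in $\|\cdot\|_\infty$ and asymptotically commute with $\mathcal U(Q)$: for $u \in \mathcal U(Q)$, the identity $uv_n = v_n u$ (valid since $v_n \in P = Q' \cap pMp$) gives $[u, \alpha_{t_n}(v_n)] = [u - \alpha_{t_n}(u), \alpha_{t_n}(v_n)]$, whose $\|\cdot\|_2$-norm tends to $0$; and $[u, E_M(\alpha_{t_n}(v_n))] = E_M([u, \alpha_{t_n}(v_n)])$ shares this property. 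Thus $(y_n)$ represents a non-zero element of $Q' \cap \tilde M^\omega$ orthogonal to $M^\omega$, contradicting the spectral gap. Popa's transversality (Lemma \ref{transversality}) then converts this uniform control to $\sup_{v \in (P)_1} \|\alpha_{2t}(v) - v\|_2 \to 0$.

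Picking $t_0 = 1/2^{n_0}$ small enough that $\|\alpha_{t_0}(u) - u\|_2 \leq \tau(p)/2$ for every $u \in \mathcal U(P)$, a direct Cauchy--Schwarz estimate gives $|\tau(\alpha_{t_0}(u^*) \, p \, u)| \geq \tau(p) - \|\alpha_{t_0}(u^*) - u^*\|_2 \geq \tau(p)/2 > 0$ uniformly in $u \in \mathcal U(P)$. Hence $P$ (in the role of ``$Q$'' of Theorem \ref{adapted IPV}) satisfies the hypothesis of that theorem with $z = p$, and we obtain either $\mathcal N_{pMp}(P)'' \prec_M A \rtimes \Sigma$ for some $\Sigma \in \mathcal S$, or $P \prec_M L\Gamma$. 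To exclude the first alternative, note that $Q \subset \mathcal N_{pMp}(P)''$ since every unitary of $Q$ commutes with $P$ (hence normalizes it); any $\mathcal N_{pMp}(P)''$-$(A \rtimes \Sigma)$-subbimodule of $L^2(pM)$ of finite right $(A \rtimes \Sigma)$-dimension is automatically a $Q$-$(A \rtimes \Sigma)$-subbimodule of the same finite dimension, so $Q \prec_M A \rtimes \Sigma$. But $A$ is abelian and $\Sigma \in \mathcal S$ is amenable, so $A \rtimes \Sigma$ is amenable; the intertwining then produces a non-zero projection $e \in Q$ with $eQe$ amenable, and its central support $z(e) \in \mathcal Z(Q)$ yields an amenable direct summand $z(e)Q$ of $Q$, contradicting the hypothesis on $Q$. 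Therefore $P \prec_M L\Gamma$.

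For the refined statement, the almost-malnormal hypothesis on $\mathcal S$ activates the refined trichotomy of Theorem \ref{adapted IPV}: applied to $P$, this yields $P \prec_M \C 1$, or $\mathcal N_{pMp}(P)'' \prec_M A \rtimes \Sigma$ for some $\Sigma \in \mathcal S$, or $\mathcal N_{pMp}(P)'' \prec_M L\Gamma$. The middle alternative is excluded exactly as above, leaving the desired dichotomy. The principal obstacle is the spectral-gap derivation of uniform convergence on $(P)_1$ carried out in the second paragraph; the remainder of the proof is a direct combination of Theorem \ref{adapted IPV} with the standard bimodule-inheritance argument for intertwining.
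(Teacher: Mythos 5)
Your proposal is correct and follows essentially the same route as the paper: spectral gap (Lemmas \ref{Spectral Gap} and \ref{weaklycontained}) combined with Popa's transversality to get uniform convergence of the deformation on $(P)_1$, then Theorem \ref{adapted IPV} applied to $P$ with the first alternative excluded because $Q \subset \mathcal N_{pMp}(P)''$ would inherit an amenable direct summand from $A \rtimes \Sigma$. The only (cosmetic) difference is that the paper first amplifies $Q$ to a unital subalgebra $Q^{1/\tau(p)} \subset M$ before invoking Lemma \ref{Spectral Gap}, which is stated for unital inclusions, whereas you apply it directly to $Q \subset pMp$; to be literal you should either amplify likewise or cut your sequence $(y_n)$ by $p$ (using $\Vert \alpha_{t_n}(p)-p\Vert_2 \to 0$), a routine adjustment.
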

\begin{proof}
Consider an amplification $Q^t \subset M$ with $t = 1/\tau(p)$, such that $Q = pQ^tp$.
Exactly as in the proof of Lemma 5.2 in \cite{popasup}, spectral gap lemmas \ref{Spectral Gap} and \ref{weaklycontained}, and Popa's transversality argument imply that the deformation $\alpha_t$ converges uniformly on $(Q^t)' \cap M$, and in particular on $P = p((Q^t)' \cap M)$. Now, by Theorem \ref{adapted IPV}, the position of $P$ is described by one of the following situations :
\begin{enumerate}
\item $\mathcal N_{pMp}(P)'' \prec_M A \rtimes \Sigma$, for some $\Sigma \in \mathcal{S}$ ;
\item $P \prec_M L\Gamma$. 
\end{enumerate}
But case (1) is impossible because $Q \subset \mathcal N_{pMp}(P)''$ has no amenable direct summand, whereas all the $A \rtimes \Sigma$, $\Sigma \in \mathcal{S}$ are amenable.
The moreover part is a consequence of the moreover part in Theorem \ref{adapted IPV}.
\end{proof}

\begin{proof}[Proof of theorem A]
As pointed out in the introduction, it is enough to show that if $P \subset A$ is a diffuse subalgebra, then $Q = P' \cap M$ is amenable. Hence consider $q \in \mathcal Z(Q)$ a maximal projection such that $qQ$ is amenable. Assume by contradiction that $q \neq 1$. Thus $(1-q)Q \subset (1-q)M(1-q)$ has no amenable direct summand, and Theorem \ref{positioncommutant} implies that $(1-q)(Q' \cap M) \prec_M L\Gamma$. Since $P \subset Q' \cap M$, we get $(1-q)P \prec_M L\Gamma$. This contradicts the fact that $P \subset A$ is diffuse.
\end{proof}

\begin{proof}[Proof of Theorem B]
As in the statement of the theorem, assume that the representation $\pi$ has a tensor power which is tempered, and that $\pi$ is mixing relative to a finite family $\mathcal{S}$ of amenable almost malnormal subgroups of $\Gamma$.\\
Consider a subalgebra $Q \subset M$ such that $Q \nprec_M L\Gamma$.\\

{\sc Step 1.} Construction of the projections $p_n$.\\
This is similar to the proof of \cite[Proposition 6]{CI}.
Naturally, take for $p_0$ the maximal projection in $\mathcal Z(Q)$ such that $p_0Q$ is hyperfinite. Let us show that $(1-p_0)\mathcal Z(Q)$ is discrete.\\
Otherwise one can find a projection $p \in \mathcal{Z}(Q)$ with $p \leq 1-p_0$ such that $p\mathcal Z(Q)$ is diffuse. But $pQ$ has no amenable direct summand, and the moreover part of proposition \ref{positioncommutant} implies that either $p(Q'\cap M) \prec_M \C1$ or $\mathcal N_pMp(p(Q'\cap M))'' \prec L\Gamma$.
The first case is excluded becuase $p\mathcal Z(Q)$ is diffuse. The second case would imply that $Q \prec_M L\Gamma$, which is impossible as well.\\
Thus we obtain (at most) countably many projections $(p_n)_{n \geq 0}$ such that $p_0Q$ is hyperfinite, and $p_nQ$ is a non-hyperfinite factor for all $n \geq 1$.\\

{\sc Step 2.} For any $n \geq 1$, $p_nQ$ does not have property Gamma and is prime.\\
An easy amplification argument implies that it is sufficient to show that any non-hyperfinite subfactor $N \subset M$ such that $N \nprec_M L\Gamma$ is non-Gamma and prime.

{\it Non Property Gamma.} Since $N \subset M$ has no amenable direct summand, spectral gap lemma \ref{Spectral Gap} implies that the deformation converges uniformly on $N' \cap (M)^\omega$ and {\it a fortiori} on $B = N' \cap N^\omega$. So we are in the situation of Theorem \ref{ultraproduct}.\\
Assume that $N$ has property Gamma. Then $B \neq \C1$, and a classical argument implies that $B$ is diffuse. Moreover, by definition of $B$, we have that $N \subset B' \cap M$. Since $N \nprec_M L\Gamma$ and $N$ is non-amenable, cases (3) and (4) case of Theorem \ref{ultraproduct} are not satisfied, so that $B \subset A^\omega \rtimes \Gamma$. We will derive a contradiction from this fact.\\
As in the proof of Proposition 7 in \cite{ozawa2003}, we can construct a sequence of $\tau$-independent commuting projections $p_n \in N$ of trace $1/2$, such that $(p_n) \in N' \cap N^\omega$, and if $C = \lbrace p_n \, \vert \, n \in \N \rbrace''$, then $C' \cap N$ is not amenable.\\
Now, take a non-zero projection $q \in \mathcal{Z}(C' \cap N)$ such that $qC'\cap N$ has no amenable direct summand. By Proposition \ref{positioncommutant}, we get that $qC \prec_M L\Gamma$.\\
At this point, remark that the sequence of unitaries $w_n = 2p_n - 1 \in \mathcal{U}(C)$ converges weakly to $0$, and that $(w_n) \in N' \cap N^\omega \subset A^\omega \rtimes \Gamma$. The following claim leads to a contradiction.\\
{\it Claim.} For all $x,y \in M, \lim_n \Vert E_{L\Gamma}(xqw_ny)\Vert_2 = 0$.\\
By Kaplanski's density theorem, and by linearity, it suffices to prove the claim for $x = au_h$, $y= bu_k$, for $a,b \in A$, $h,k \in \Gamma$. Write $qw_n = \sum_{g \in \Gamma} a_{n,g}u_g$ and let $\varepsilon > 0$. Since $(qw_n) \in A^\omega \rtimes \Gamma$, there exists $F \in \Gamma$ finite such that $$\Vert P_F(qw_n) - qw_n \Vert_2 < \frac{\varepsilon}{2\Vert a \Vert \Vert b \Vert}, \; \forall n \in \N.$$
Now we have :
\begin{align*}
\Vert E_{L\Gamma}(xP_F(qw_n)y)\Vert_2^2  = & \sum_{g \in F} \vert \tau(a\sigma_h(a_{n,g})\sigma_{hg}(b))\vert^2\\
 = & \sum_{g \in F} \vert \tau(\sigma_{h^{-1}}(a)qw_nu_g^*\sigma_g(b))\vert^2.
\end{align*}
This quantity can be made smaller than $\varepsilon^2/4$ for $n$ large enough, and we get that $\Vert E_{L\Gamma}(xqw_ny) \Vert_2 < \varepsilon$ for $n$ large enough. That proves the claim and gives the desired contradiction.

{\it B. Primeness.} If $N = N_1 \ootimes N_2$, then $N_1$ and $N_2$ are factors, and one of them, say $N_1$, is non-amenable. Hence Theorem \ref{adapted IPV} implies that $N_2 \prec_M \C1$ or $N \prec_M A \rtimes \Sigma$ for some $\Sigma \in \mathcal{S}$, or $N \prec_M L\Gamma$. The only possible case is that $N_2$ is not diffuse, and we see that $N$ is prime.
\end{proof}

\begin{rems} 1) For the part about property Gamma, there is a shorter way to show that if $N' \cap N^\omega \subset A^\omega \rtimes \Gamma$, then $N' \cap N^\omega = \C$. Assume that $x \in A^\omega \rtimes \Gamma$ is $N$ central. For all $g \in \Gamma \setminus \lbrace e \rbrace$, put $x_g = E_{A^\omega}(xu_g^*)$. We get for all $a \in A$, $ax_g = x_g\sigma_g(a)$, and so $x_g^*x_ga = x_g^*x_g\sigma_g(a)$. So if the action is free (\emph{i.e.} if $\pi$ is faithful), we get that $E_A(x_g^*x_g) = 0$. Thus $x \in A^\omega$. But by strong ergodicity, $N' \cap A^\omega = \C$. We thank Cyril Houdayer for this shorter proof. However we prefer to keep the proof of Theorem B as it is because it does not use the commutativity of $A$, which will be useful later.\\
2) The primeness result remains true if we replace the condition of being almost malnormal for the elements in $\mathcal S$, by being abelian. Indeed, in that case, write $N = N_1 \ootimes N_2$, with $N_1$ non-amenable. Then the second part of Theorem \ref{adapted IPV} does not apply, but by Theorem \ref{positioncommutant}, we get that $N_2 \prec_M L\Gamma$. Assume that $N_2$ is diffuse. Since it is a factor, $N_2 \nprec_M L\Sigma$, for all $\Sigma \in \mathcal S$. A modified version of lemma \ref{norm1} then gives $N \subset \mathcal{N}_M(N_2)'' \prec_M L\Gamma$, which contradicts $A \subset N$.
\end{rems}

\section{An adaptation to the case of Bogoliubov actions}

\subsection{Statement of the Theorem}

We first recall the main definitions on the CAR-algebra and Bogoliubov actions. We refer to chapters 7 and 8 in \cite{HJ} for a consistent material on this topic.

Consider a unitary representation $(\pi,H)$ of a discrete countable group $\Gamma$. Denote by $A(H)$ the CAR-algebra of $H$. By definition, $A(H)$ is the unique $C^*$-algebra generated by elements $(a(\xi))_{\xi \in H}$ such that :
\begin{itemize}
\item $\xi \mapsto a(\xi)$ is a linear map ;
\item $a(\xi)a(\eta) + a(\eta)a(\xi) = 0$, for all $\xi, \eta \in H$ ;
\item $a(\xi)a(\eta)^* + a(\eta)^*a(\xi) = \langle \xi , \eta \rangle$, for all $\xi, \eta \in H$.
\end{itemize}
Moreover, for any unitary $u \in B(H)$, one can define an automorphism $\theta_u$ of $A(H)$ by the formula $\theta_u(a(\xi)) = a(u\xi)$, and the map $\theta : \mathcal{U}(H) \rightarrow \Aut(A(H))$ is a continuous homomorphism for the ultra-strong topology on $\mathcal U(H)$ and the pointwise norm convergence topology in $\Aut(A(H))$.
Hence the representation $\pi$ gives rise to an action of $\Gamma$ on $A(H)$.\\
Now consider the quasi-free state $\tau$ on $A(H)$ associated to $1/2 \in B(H)$. By definition, $\tau$ is determined by the formula :
$$\tau(a(\xi_m)^* \cdots a(\xi_1)^*a(\eta_1) \cdots a(\eta_n)) = \frac{1}{2^n}\delta_{n,m}\det(\langle \xi_j , \eta_k \rangle ).$$
Then the von Neumann algebra $R_H$ on $L^2(A(H),\tau)$ generated by $A(H)$ is isomorphic to the hyperfinite II$_1$ factor and $\tau$ is the unique normalized trace on $R_H$. In addition the action of $\Gamma$ on $A(H)$ defined above extends to a trace preserving action on $R_H$, called the \emph{Bogoliubov action} associated to $\pi$. We recall the statement of the theorem that we will prove.

\begin{thmc}
Assume that the representation $\pi$ is mixing relative to a finite family $\mathcal{S}$ of almost-malnormal amenable subgroups of $\Gamma$ and has a tensor power which is tempered.
Consider the Bogoliubov action $\Gamma \curvearrowright R$ on the hyperfinite II$_1$ factor associated to $\pi$, and put $M = R \rtimes \Gamma$.\\
Let $Q \subset M$ be a subalgebra such that $Q \nprec_M L\Gamma$. Then there exists a sequence  $(p_n)_{n \geq 0}$ of projections in $\mathcal Z(Q)$ with $\sum_n p_n = 1$ such that :
\begin{itemize}
\item $p_0Q$ is hyperfinite ;
\item $p_nQ$ is a prime factor and does not have property Gamma.
\end{itemize}
\end{thmc}

To prove this theorem, we proceed as in the Gaussian case. It would be too heavy to reprove everything in details, so we just give the main steps and tools of the proof, hoping that this is enough to convince the reader.

\subsection{The deformation of $M$}
Denote by $M = R \rtimes \Gamma$, and put $\tilde M = \tilde{R} \rtimes \Gamma$, where $\Gamma$ acts on $\tilde R = R_{H \oplus H}$ by the Bogoliubov action corresponding to the representation $\pi \oplus \pi$.
Since $H = H \oplus 0 \subset H \oplus H$ one has $R \subset \tilde{R}$, and the action of $\Gamma$ on $R$ is the restriction of the action on $\tilde R$, so that $M \subset \tilde{M}$.\\
On $H \oplus H$ define $\theta_t$ and $\rho$ as in section \ref{notationsutiles}. These unitaries induce a $s$-malleable deformation $(\alpha_t,\beta)$ of $\tilde M$.\\

Before moving forward, we explain the main difference with the Gaussian case.\\
Note that in $\tilde M$ there is also a copy $R_0$ of $R$ coming from $0 \oplus H \subset H \oplus H$. However $\tilde R$ is certainly not isomorphic to the tensor product $R \ootimes R \simeq R \, \ootimes \, R_0$, because $R$ and $R_0$ do not commute to each other. To fix this problem we first check the following two facts, we will see later how to use it.
\begin{enumerate}[(i)]
\item The linear span of elements of the form $ab$ with $a \in R$, $b \in R_0$ forms an ultrastrongly dense subalgebra of $\tilde R$ ;
\item $R$ and $R_0$ are $\tau$-independent.
\end{enumerate}
Before proving the facts, we introduce some notations taken from \cite[Exercise XIV.5]{takesakiIII}. For a unitary representation $(\rho,K)$, denote by $\theta_{-1}$ the automorphism of $R_K$ induced by $-id \in \mathcal{U}(K)$, and put :
\begin{align*}
R_{K,\ev} & = \lbrace x \in R_K \, \vert \, \theta_{-1}(x) = x \rbrace ;\\
R_{K,\odd} & = \lbrace x \in R_K \, \vert \, \theta_{-1}(x) = -x \rbrace.
\end{align*}
Remark that $R_K = R_{K,\ev} \oplus R_{K,\odd}$. Now point (i) follows from the easily checked relations :
\begin{itemize}
\item[(iii)] $xy = yx$ for all $x \in R_{\ev}$, $y \in R_0$ ;
\item[(iv)] $xy = \theta_{-1}(y)x$, for all $x \in R_{\odd}$, $y \in R_0$.
\end{itemize}
To prove (ii), take $x \in R$, and $y \in R_0$. If $x \in R_{\ev}$, then $z \in R_0 \mapsto \tau(xz)$ is a trace on the factor $R_0$ so it is equal to $\tau(x)\tau$, and we indeed get $\tau(xy) = \tau(x)\tau(y)$. If $x \in R_{\odd}$, then $\tau(xy) = \tau(\theta_{-1}(y)x) = \tau(y\theta_{-1}(x)) = - \tau(yx) = 0$. But it is also true if $y = 1$ : $\tau(x)=0$. Hence, in that case too, we get $\tau(xy) = \tau(x)\tau(y)$. By linearity, this relation is true for any $x \in R_H$.

\subsection{Adaptation of the main ingredients and sketch of proof}
We first check that the 2 main ingredients of the proof can be adapted : the spectral gap lemma and the mixing properties of the action.

\begin{lem}[Spectral gap argument] If $\pi$ is tempered, the following relation between $M$-$M$ bimodules is true. $$L^2(\tilde{M}) \ominus L^2(M) \subset_w L^2(M) \otimes L^2(M)$$
\end{lem}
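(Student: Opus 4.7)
The plan is to mimic the proof of Lemma \ref{weaklycontained} very closely, with the amenability of the hyperfinite II$_1$ factor $R$ playing the role of the amenability of the abelian algebra $A$ in the Gaussian case. The argument will have three main ingredients: a general bimodule construction $\mathcal H^\eta$ built from a representation $\eta$ of $\Gamma$, an identification of $L^2(\tilde M)\ominus L^2(M)$ with such a bimodule, and the temperedness of the relevant $\eta$.

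For any unitary representation $\eta:\Gamma\to\mathcal U(K)$, I would define an $M$-$M$ bimodule $\mathcal H^\eta$ on the Hilbert space $L^2(M)\otimes K$ by
$$(ru_g)\cdot(x\otimes v)\cdot(su_h)=ru_g\, x\, su_h\otimes\eta_g(v),\qquad r,s\in R,\ g,h\in\Gamma,\ x\in M,\ v\in K.$$
Since $R$ is amenable, the same Fell-type argument given in the proof of Lemma \ref{weaklycontained} shows that $\mathcal H^\eta$ is weakly contained in the coarse $M$-$M$ bimodule $L^2(M)\otimes L^2(M)$ whenever $\eta$ is tempered. One also verifies $\mathcal H^{\eta_1\otimes\eta_2}\simeq \mathcal H^{\eta_1}\otimes_M\mathcal H^{\eta_2}$, although only the single-representation statement is needed here.

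Next, I would identify $L^2(\tilde M)\ominus L^2(M)$ with $\mathcal H^{\sigma^0_\pi}$, where $\sigma^0_\pi$ now denotes the Koopman representation of $\Gamma$ on $L^2(R)\ominus\mathbb C$ induced by the Bogoliubov action. By the ultrastrong density of $R\cdot R_0$ in $\tilde R$ (fact (i)) and the $\tau$-independence of $R$ and $R_0$ (fact (ii)), the map $x\otimes y\mapsto xy$ extends to a Hilbert space isomorphism $L^2(R)\otimes L^2(R_0)\simeq L^2(\tilde R)$, and restricting to $L^2(R)\otimes(L^2(R_0)\ominus\mathbb C)$ gives the orthogonal complement $L^2(\tilde R)\ominus L^2(R)$. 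Tensoring with $\ell^2(\Gamma)$ and using that $L^2(R_0)\ominus\mathbb C$ is $\Gamma$-equivariantly isomorphic to $L^2(R)\ominus\mathbb C$ produces the required bimodule identification. Finally, $L^2(R)\ominus\mathbb C$ is isomorphic as a $\Gamma$-representation to the antisymmetric Fock space $\bigoplus_{n\geq1}H^{\wedge n}$ built from $\pi$, and since each $H^{\wedge n}\subset H^{\otimes n}$ is tempered whenever $\pi$ is (by Fell's absorption), the representation $\sigma^0_\pi$ is tempered. Combining with Step 1 yields the desired weak containment.

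The main obstacle will be carrying out the bimodule identification rigorously: the right action of $R$ on $L^2(\tilde R)\ominus L^2(R)$ is not a plain tensor-product right multiplication, because odd elements of $R$ commute with $R_0$ only up to the automorphism $\theta_{-1}$ (fact (iv)). Concretely, for $z\in R_{\odd}$ and $y\in R_0$ one has $yz=z\theta_{-1}(y)$, so under the identification above the right action becomes $(x\otimes y)\cdot z=xz_{\ev}\otimes y+xz_{\odd}\otimes\theta_{-1}(y)$. Since $\theta_{-1}$ is a trace-preserving $\Gamma$-equivariant automorphism of $R_0\simeq R$, this twist is absorbed by an inner change of variables on the bimodule (extending $\theta_{-1}$ to a unitary on $L^2(R)$) and does not affect the weak containment class; verifying this bookkeeping carefully is the one genuine departure from the Gaussian proof.
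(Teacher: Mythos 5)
Your overall strategy is the paper's: identify $L^2(\tilde M)\ominus L^2(M)$ with a bimodule of the form $L^2(M)\otimes K$ carrying a tempered $\Gamma$-representation on $K$, and then invoke amenability of $R$ exactly as in Lemma \ref{weaklycontained}. However, the step you yourself single out as the genuine departure --- absorbing the anticommutation twist --- is not resolved by the mechanism you propose. In your formula $(x\otimes y)\cdot z=xz_{\ev}\otimes y+xz_{\odd}\otimes\theta_{-1}(y)$ the twist on the $R_0$-leg depends on the parity of the \emph{acting} element $z$, so no fixed unitary on $L^2(R_0)$ (in particular not the one implementing $\theta_{-1}$) can intertwine this action with the untwisted one $x\otimes y\mapsto xz\otimes y$ while commuting with the left action; moreover $\theta_{-1}$ is outer on $R$ for infinite-dimensional $H$, so the twisted standard bimodule is genuinely not isomorphic to the trivial one. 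The paper's resolution is to split the \emph{second} leg by parity: $L^2(\tilde M)\ominus L^2(M)\simeq\mathcal H_1\oplus\mathcal H_2$, where $\mathcal H_1$ is built on $L^2(R_{0,\ev})\ominus\C$ (there the twist vanishes and your $\mathcal H^\eta$ picture applies verbatim) and $\mathcal H_2$ on $L^2(R_{0,\odd})$, where the twist migrates to the first leg and replaces $L^2(R)$ by the $\theta_{-1}$-twisted bimodule $L^2(R,\theta_{-1})$. Amenability of $R$ still gives $L^2(R,\theta_{-1})\subset_w L^2(R)\otimes L^2(R,\theta_{-1})$, which is coarse, so your intended conclusion does hold --- but via this parity decomposition, not via a change of variables.

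A second, more minor, inaccuracy: the GNS space of $(R_H,\tau)$ for the trace (the quasi-free state of parameter $1/2$) is the antisymmetric Fock space of the \emph{doubled} space $H\oplus\bar H$, not of $H$, so $L^2(R)\ominus\C\cong\bigoplus_{n\geq1}(H\oplus\bar H)^{\wedge n}$ as a $\Gamma$-representation. Temperedness survives, since $\bar\pi\prec\lambda$ whenever $\pi\prec\lambda$ and Fell absorption applies to every summand with $n\geq1$, but note that the paper sidesteps the Fock-space identification entirely: it expands the determinant formula for $\tau$ to express the matrix coefficients of $\sigma_\pi^0$ as sums of coefficients of $(\pi\oplus\bar\pi)^{\otimes n}$ with $n\geq 1$ (the constant $n=0$ contribution cancelling against the subtracted $\tau(\eta_0)$), and concludes by Fell's lemma. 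You should either adopt that computation or correct the Fock space before quoting temperedness of the Koopman representation.
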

\begin{proof}
We first show an intermediate step.\\
{\it Claim.} If $\pi$ is weakly contained in the regular representation $\lambda$, so is the representation $\sigma = \sigma_\pi^0$ on $L^2(R_H) \ominus \C$ induced by the Bogoliubov action of $\pi$.\\
To prove this claim, we need to check that for all $\xi, \eta \in L^2(R_H) \ominus \C$, the coefficient function $\varphi_{\xi,\eta} : g \mapsto \langle \sigma_g(\xi),\eta \rangle$ can be approximated on finite subsets of $\Gamma$ by sums of coefficient functions of $\lambda$. This will be denoted $\varphi_{\xi,\eta} \preceq \lambda$.
By a linearity/density argument, we can assume that
$$\xi = a(\xi_n)^* \cdots a(\xi_1)^*a(\eta_1)\cdots a(\eta_n) \text{ and }  \eta = \eta_0 - \tau(\eta_0),$$
with $\eta_0 = a(\xi'_1) \cdots a(\xi'_l)a(\eta'_k)^*\cdots a(\eta'_1)^*$. Indeed in that case we will get that $\varphi_{h,k} \prec \lambda$ for all $h \in L^2(R_H)$, $k \in L^2(R_H) \ominus \C$, and in particular for $h \in L^2(R_H) \ominus \C$.
But a computation gives
\begin{align*}
\langle \sigma_g(\xi) , \eta \rangle = & 1/2^{n+l} \delta_{n+l,m+k} \det \begin{pmatrix} \langle \sigma_g(\eta_i), \xi'_j \rangle & \langle \eta_i, \xi_j \rangle \\ \langle \eta'_i, \xi'_j \rangle & \langle \eta'_i, \sigma_g(\xi_j) \rangle \end{pmatrix}\\
& - 1/2^n \delta_{n,m}\det(\langle \eta_i , \xi_j \rangle)1/2^l\delta_{k,l}\det(\langle \eta'_i, \xi'_j \rangle).
\end{align*}
Developing the above determinant, we get a linear combination of terms that can be approximated by coefficient of finite tensor powers of $\lambda$, plus a term equal to $\det(\langle \eta'_i, \xi'_j \rangle)\det(\langle \eta_i,\xi_j \rangle)$ if $n = m$, $k = l$, and 0 otherwise. Therefore this extra-term cancels with the second term of the above equality.\\
So by Fell's lemma we get $\varphi_{\xi,\eta} \prec \oplus_n \lambda^{\otimes n} \prec \lambda$, which proves the claim.

Now we can prove the lemma.
First, using the facts (i)-(iv) of the previous subsection one easily checks the isomorphism of $M$-$M$ bimodules
$$L^2(\tilde{M}) \ominus L^2(M) \simeq \mathcal{H}_1 \oplus \mathcal{H}_2,$$
with $\mathcal{H}_1 = L^2(R) \otimes L^2(R_{0,\ev}) \ominus \C \otimes l^2(\Gamma)$ and $\mathcal{H}_2 = L^2(R,\theta_{-1}) \otimes L^2(R_{0,\odd}) \otimes l^2(\Gamma)$, and the bimodule structures on $\mathcal H_1$ and $\mathcal H_2$ given respectively by $$au_g(x \otimes \xi \otimes \delta_h)bu_k = a\sigma_g(x)\sigma_{hg}(b) \otimes \sigma_g(\xi) \otimes \delta_{ghk},$$
$$au_g(x \otimes \eta \otimes \delta_h)bu_k = a\sigma_g(x)\theta_{-1}(\sigma_{hg}(b)) \otimes \sigma_g(\eta) \otimes \delta_{ghk},$$ 
$a, x, b \in R$, $\xi \in  L^2(R_{0,\ev})\ominus \C$, $\eta \in  L^2(R_{0,\odd})$, $g, h, k \in \Gamma$.\\
We have to show that $\mathcal{H}_i \subset_w L^2(M) \otimes L^2(M)$, for $i = 1,2$. We do it only for $\mathcal H_2$, the case of $\mathcal H_1$ being similar. By the claim above, we get that $$\mathcal{H}_2 \subset_w L^2(R,\theta_{-1}) \otimes \ell^2(\Gamma) \otimes \ell^2(\Gamma),$$ with an $M$-$M$ bimodule structure similar to the one on $\mathcal H_2$.\\
In fact, this last bimodule is seen to be isomorphic to $L^2(M) \otimes_R L^2(R,\theta_{-1}) \otimes_R L^2(M)$, and since $R$ is amenable, we also have that $L^2(R,\theta_{-1}) \subset_w L^2(R) \otimes L^2(R,\theta_{-1})$.
But $L^2(R) \otimes L^2(R,\theta_{-1})$ is clearly isomorphic to the coarse $R$-$R$ bimodule.\\
In summary, we get that $\mathcal H_2 \subset_w L^2(M) \otimes L^2(M)$.
\end{proof}
With a little more care, one could show that the conclusion of the above lemma remains true if one just assume that some tensor power of $\pi$ is tempered.

\begin{lem}[mixing property] Assume that $\pi$ is mixing relative to a family $\mathcal S$ of subgroups of $\Gamma$. Then the associated Bogoliubov action $\sigma_\pi$ is mixing relative to $\mathcal S$ as well.
\end{lem}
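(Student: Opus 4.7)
The strategy mirrors the proof of Proposition \ref{stableproperties} in the Gaussian setting: I would express each matrix coefficient of the induced unitary representation of $\Gamma$ on $L^2(R_H)\ominus \C$ as a polynomial in matrix coefficients of $\pi$, and then feed that polynomial expression into the relative mixing hypothesis on $\pi$.

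First I would reduce, by linearity and density, to checking that $|\langle \sigma_g(\xi), \eta\rangle|$ can be made arbitrarily small outside a set small relative to $\mathcal S$ when $\xi$ and $\eta$ are of the form $a(\xi_1)^*\cdots a(\xi_m)^* a(\eta_1)\cdots a(\eta_n)$, each minus its trace. Using the quasi-free trace formula
\[\tau(a(\xi_m)^*\cdots a(\xi_1)^* a(\eta_1)\cdots a(\eta_n)) = 2^{-n}\delta_{n,m}\det(\langle \xi_j,\eta_k\rangle),\]
an explicit computation, entirely analogous to the one carried out in the proof of the preceding spectral gap lemma, expresses $\langle \sigma_g(\xi),\eta\rangle$ as a single determinant whose entries are either of the form $\langle \pi(g)\alpha,\beta\rangle$ or $g$-independent inner products, minus a correction term coming from the traces of $\xi$ and $\eta$.

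The key point is that in the expansion of that determinant, the (unique) summand whose entries are all $g$-independent is precisely the product of the two trace corrections, and it cancels exactly with the explicit correction term. Every remaining summand is therefore a product of matrix coefficients of $\pi$ with at least one factor of the form $\langle \pi(g)\alpha,\beta\rangle$, where $\alpha,\beta$ range over a finite collection of vectors depending only on $\xi$ and $\eta$. Given $\varepsilon>0$, the relative mixing of $\pi$ provides, for each such pair, a set $F_{\alpha,\beta}\subset \Gamma$ small relative to $\mathcal S$ outside of which $|\langle \pi(g)\alpha,\beta\rangle|<\varepsilon$; the union $F$ of these finitely many small sets is itself small relative to $\mathcal S$, and outside $F$ one obtains $|\langle \sigma_g(\xi),\eta\rangle|\leq C\varepsilon$ for a constant $C$ depending only on the norms of the fixed vectors.

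The only real technical obstacle I foresee is the bookkeeping needed to identify, within the determinant expansion, the cancellation of the unique $g$-independent term against the trace correction; once that is in place, the proof is essentially a direct transcription of the Gaussian argument.
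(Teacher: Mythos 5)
Your proposal is correct and is essentially the intended argument: the paper itself does not write out a proof of this lemma but simply cites Theorem 2.3.2(1) of the Cherix--Cowling--Jolissaint--Julg--Valette book for the mixing case and asserts the relative case is identical, and the determinant computation you describe (including the cancellation of the unique $g$-independent summand against the trace-correction term) is exactly the computation the paper does carry out explicitly in the proof of the preceding spectral gap lemma, only with weak containment replaced by smallness of matrix coefficients outside sets small relative to $\mathcal S$. So you are supplying the details the paper outsources, not taking a different route.
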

\begin{proof}
This is Theorem 2.3.2(1) of \cite{CCJJV} in the mixing case. The relative mixing case is treated in the same way.
\end{proof}

Now using the mixing property and relations (i)-(iv) of the previous subsection, one can imitate line by line the proof of Lemma 3.8 in \cite{vaesonecoho}, so that Lemma \ref{norm1} and Lemma \ref{technicallemma} can be adapted to this context. 

Hence, all the material needed to prove Theorem \ref{adapted IPV} (the one about the position of subalgebras in $M$) admits an analogous in the setting of Bogoliubov actions, so that this holds true for these actions (under the same assumptions on $\pi$). The reader may have noticed that there is a step in the proof that needs to be adapted\footnote{namely, one needs to check that if $\alpha_1(Q) \prec_{\tilde M} Q$ then $Q \prec_M L\Gamma$.}, but it all works thanks to the properties (i)-(iv) of the previous subsection.

Now Theorem \ref{ultraproduct} (the one about the position of subalgebras in $M^\omega$) and then Theorem C can be proven exactly as in the Gaussian case.

\bibliographystyle{alpha1}

\begin{thebibliography}{AA}

\bibitem{bhv} {\sc B. Bekka, P. de la Harpe \& A. Valette}, {\it Kazhdan's property (T)}, In Press, Cambridge
University Press, 2007.


\bibitem{brownozawa} {\sc N. Brown \& N. Ozawa},
{\it C$^*$-algebras and finite-dimensional approximations.} Grad. Stud. Math., {\bf 88}. Amer. Math. Soc., Providence, RI, (2008).


\bibitem{CCJJV} {\sc P.-A. Cherix, M. Cowling, P. Jolissaint, P. Julg \& A. Valette} {\it Groups with the Haagerup property (Gromov's a-T-menability).} Progr. in Math., {\bf 197}, Birkhauser Verlag, Basel, 2001.


\bibitem{CI} {\sc I. Chifan \& A. Ioana}, {\it Ergodic sub-equivalence relations induced by a Bernoulli action.} Geometric and Functional Analysis Vol. {\bf 20} (2010), 53--67.









\bibitem{Cowling}{\sc M. Cowling}{\it Sur les coefficients des représentations unitaires des groupes de Lie simples.} Lecture Notes in Math. {\bf 739} (1979), 132--178. 


\bibitem{CHH} {\sc M. Cowling, U. Haagerup \& R. Howe}, {\it Almost $L^2$ matrix coefficients}, J. Reine Angew. Math. {\bf 387} (1988), 97--110.








\bibitem{FMII} {\sc J. Feldmann \& C.C. Moore}, {\it Ergodic equivalence relations, cohomology, and von
Neumann algebras, II}, Trans. Am. Math. Soc. {\bf 234} (1977), 325–-359.



\bibitem{gaboriauICM}{\sc D. Gaboriau}, {\it Orbit Equivalence and Measured Group Theory}, Proceedings of the ICM (Hyderabad, India, 2010), Vol. {\bf III}, Hindustan Book Agency (2010), 1501-1527. 

\bibitem{GL} {\sc D. Gaboriau \& R. Lyons}, {\it A Measurable-Group-Theoretic Solution to von Neumann's Problem}, Inventiones Mathematicae, {\bf 177} (2009), 533--540.


\bibitem{HJ} {\sc P. de la Harpe \& V. Jones} {\it An introduction to $C^*$-algebras} Universit\'e de Gen\`eve, 1995.








\bibitem{houdayerbourbaki} {\sc C. Houdayer}, {\it Invariant percolation and measured theory of nonamenable groups}, Ast\'erisque, to appear. arXiv:1106.5337 



\bibitem{ioana5}{\sc A. Ioana}, {\it W$^*$-superrigidity for Bernoulli actions of property (T) groups}, Journal of the AMS, to appear, arXiv:1002.4595.


\bibitem{ipv}{\sc A. Ioana, S. Popa \& S. Vaes}, {\it A class of superrigid group von Neumann algebras}, preprint arXiv:1007.1412v2, 2011.








\bibitem{ozawa2003} {\sc N. Ozawa}, {\it Solid von Neumann algebras.} Acta Math. {\bf 192} (2004), 111--117. 





\bibitem{peterson4} {\sc J. Peterson}, {\it $L^2$-rigidity in von Neumann algebras.} Invent. Math. {\bf 175} (2009), 417--433.

\bibitem{popacor} {\sc S. Popa} {\it Correspondences} INCREST Preprint, 56, 1986 

\bibitem{ps} {\sc J. Peterson \& T. Sinclair}, {\it On cocycle superrigidity for Gaussian actions}, to appear in Erg. Th. \& Dyn. Sys., 2009. arXiv:0910.3958.


\bibitem{popasup07} {\sc S. Popa}, {\it Cocycle and orbit equivalence superrigidity for malleable actions of w-rigid groups.} Invent.Math. {\bf 170} (2007), 243-295.

\bibitem{popasup} {\sc S. Popa}, {\it On the superrigidity of malleable actions with spectral gap.}  J. Amer. Math. Soc. {\bf 21} (2008), 981--1000. 



\bibitem{popamal1} {\sc S. Popa}, {\it Strong rigidity of ${\rm II_1}$ factors arising from malleable actions of w-rigid groups ${\rm I}$.} Invent. Math. {\bf 165} (2006), 369--408.

\bibitem{popamal2} {\sc S. Popa}, {\it Strong rigidity of ${\rm II_1}$ factors arising from malleable actions of w-rigid groups ${\rm II}$.} Invent. Math. {\bf 165} (2006), 409-453.


\bibitem{popamsri} {\sc S. Popa}, {\it Some rigidity results for non-commutative Bernoulli Shifts.} J. Funct. Anal. {\bf 230} (2006), 273--328.


\bibitem{PV10} {\sc S. Popa \& S. Vaes}, {\it Group measure space decomposition of ${\rm II_1}$ factors and W$^*$-superrigidity} Inventiones Mathematicae {\bf 182} (2010), 371--417.

\bibitem{PV12} {\sc S. Popa \& S. Vaes}, {\it Unique Cartan decomposition for II1 factors arising from arbitrary
actions of hyperbolic groups}, preprint, arXiv:1201.2824.




\bibitem{shalomrigidity} {\sc Y. Shalom}, {\it Rigidity, unitary representations of semisimple groups, and fundamental groups of manifolds with rank one transformation group}, Ann. of Math. (2) {\bf 152} (2000), no. 1, 113--182.










\bibitem{sinclair1} {\sc T. Sinclair}, {\it Strong solidity of group factors from lattices in $SO(n,1)$ and $SU(n,1)$}, J. Funct. Anal. (2011). arXiv:1009.2247.




\bibitem{takesakiIII} {\sc M. Takesaki}, { \it Theory of Operator Algebras ${\rm III}$.} EMS {\bf 127}. Springer-Verlag, Berlin, Heidelberg, New-York, 2000.








\bibitem{vaesbimodule} {\sc S. Vaes}, {\it Explicit computations of all finite index bimodules for a family of ${\rm II_1}$ factors.} Ann. Sci. \'Ecole Norm. Sup. {\bf 41} (2008), 743--788.



\bibitem{vaesonecoho}{\sc S. Vaes}, {\it One-cohomology and the uniqueness of the group measure space of a II$_1$ factor}, preprint, arXiv:1012.5377, 2010.










\end{thebibliography}

\end{document}